\numberwithin{equation}{section}
\newcommand{\LT}{L^2({\mathbb{R})}}
\def\eps {\varepsilon}
\newcommand{\R}{\mathbb R}
\newcommand{\eq}[2]{\begin{equation}\begin{split}#1\end{split}\label{#2}\end{equation}}
\newcommand{\eqnn}[1]{\begin{equation}\begin{split}#1\end{split}\nonumber\end{equation}}
\newcommand{\RM}{{\mathbb{R}}}
\newcommand{\CM}{{\mathbb{C}}}
\newcommand{\LM}{{\mathcal{L}}}
\newtheorem{theorem}{Theorem}[section]
\newtheorem{proposition}[theorem]{Proposition}
\newtheorem{corollary}[theorem]{Corollary}
\newtheorem{lemma}[theorem]{Lemma}
\newtheorem{remark}[theorem]{Remark}
\theoremstyle{definition}
\newtheorem{assumption}[theorem]{Assumption}
\title{Orbital Stability of Smooth Solitary Waves for the Novikov Equation}
\author{Brett Ehrman\thanks{Department of Mathematics, University of Kansas, 1460 Jayhawk Boulevard, 
Lawrence, KS 66045, USA; ehrman.brett@ku.edu}~~
Mathew~A.~Johnson\thanks{Department of Mathematics, University of Kansas, 1460 Jayhawk Boulevard, 
Lawrence, KS 66045, USA; matjohn@ku.edu}~~ 
St\'ephane Lafortune\thanks{Department of Mathematics, College of Charleston, Charleston, SC 29401, USA; lafortunes@cofc.edu}
}
\date{\today}
\begin{document}

\maketitle

\begin{abstract}
We study the orbital stability of smooth solitary wave solutions of the Novikov equation, which is a Camassa-Holm type 
equation with cubic nonlinearities.  These solitary waves are shown to exist as a one-parameter family (up to spatial translations)
parameterized by their asymptotic endstate, and are encoded as critical points of a  particular action functional. As an important
step in our analysis we must study the spectrum the Hessian of this action functional, which turns out to be a nonlocal integro-differential operator
acting on $L^2(\RM)$.  We provide a combination of analytical and numerical evidence that the necessary spectral hypotheses always
holds for the Novikov equation.  Together with a detailed study of the associated Vakhitov-Kolokolov condition, our analysis indicates 
that all smooth solitary wave solutions of the Novikov equation are nonlinearly orbitally stable.
\end{abstract}


\section{Introduction}

In this paper, we consider the nonlinear stability of smooth solitary wave solutions of the Novikov equation
\begin{equation}\label{e:nov2}
u_t-u_{xxt}=u^2u_{xxx}-4u^2u_x+3uu_xu_{xx},
\end{equation}
which was originally proposed by Novikov \cite{Novikov} as part of a classification of generalized Camassa-Holm-type equations
that possess an infinite hierarchy of higher order symmetries.  The equation \eqref{e:nov2} is known to be completely integrable via the 
Inverse Scattering Transform, has infinitely many symmetries and conserved quantities, and is bi-Hamiltonian \cite{HoneNovikovH,Novikov}.
Further, the Novikov equation has been shown to model the propagation of shallow water waves of moderately large amplitude \cite{Chen2022}.
In this regard, \eqref{e:nov2} can be regarded as generalization of the Camassa-Holm equation \cite{Cam1,Cam2}
\begin{equation}
\label{CH}
u_t-u_{txx} = uu_{xxx}-3uu_x+2u_xu_{xx},
\end{equation}
 that accounts for cubic nonlinearities.

The Novikov equation \eqref{e:nov2} is known to admit a variety of both smooth and non-smooth traveling wave structures.  
Indeed, similar to the Camassa-Holm equation, equation \eqref{e:nov2} is known to admit both peaked and multi-peaked soliton solutions, known
as peakons.  The stability of peakon solutions of \eqref{e:nov2} has received considerable attention, 
and their existence, spectral and linear (in)stability, and nonlinear orbital stability has been studied in various other works: see \cite{Chen2021,Lafortune2024,Palacio2020,Palacios2021}
and references therein.  Additionally, the Novikov equation admits smooth soliton and multi-soliton solutions, as well as smooth spatially periodic traveling waves
\cite{Matsuno,EhJL2}.  Unlike their peaked counterparts, however, the dynamical stability of smooth solutions so far has received little to no attention.

In this work, we aim to study the stability of smooth solitary wave solutions of the Novikov equation \eqref{e:nov2}.  As we will see, such waves
exist as a one-parameter family (up to spatial translations) of smooth solutions that can be smoothly parameterized by their necessarily non-zero asymptotic
end state at spatial infinity.  In this setting, the well-posedness of \eqref{e:nov2} has been studied previously in \cite{Himonas2012,Ni2011}.  Further,
in \cite{Wu2011,Wu2012} it was shown that solutions to \eqref{e:nov2} with initial data $u_0\in H^s(\RM)$ with $s>\frac{3}{2}$ satisfying $u_0-u_0''>0$ exist globally in time.  
Finally, we note that the phenomena of wave-breaking was studied in the context of the Novikov equation in \cite{Chen2016,Jiang2012,Wu2012}.

The basic approach taken in this work is by now classical, essentially being an application of the methodology formalized by Grillakis, Shatah and Strauss in \cite{GSS}
for the stability of nonlinear solitary waves in Hamiltonian systems.  To this end, we we note that the Hamiltonian structure associated to \eqref{e:nov2} is 
expressed in terms of the so-called momentum density variable $m=u-u_{xx}$.  In terms of the variable $m$, the Novikov equation can be rewritten
as
\begin{equation}\label{e:nov1}
m_t+u^2m_x+3uu_xm=0.
\end{equation}
Once the existence of smooth solitary waves to \eqref{e:nov2}, or equivalently \eqref{e:nov1}, is established we will show directly
that such waves arise as critical points of an appropriate action functional constructed as a linear combination of conserved quantities
for the Novikov equation.  Following the general methodology in \cite{GSS} we then aim to determine conditions that guarantee that a given
solitary wave is a constrained local extrema of the Hessian of the associated action functional.  This requires a detailed study of the spectral 
properties of the Hessian operator, denoted in our work as $\mathcal{L}$, acting on $L^2(\RM)$ which, in the present case, is complicated by the fact that 
$\mathcal{L}$ itself is a nonlocal integro-differential operator of second-order\footnote{Equivalently and equally as difficult, the spectral problem for $\mathcal{L}$ can be recast
as a generalized eigenvalue problem for a fourth-order differential operator.}.  Through a combination of analytical and well-conditioned numerical Evans function techniques,
we provide strong evidence that the necessary spectral hypotheses on $\mathcal{L}$ in fact hold for all smooth solitary wave solutions of \eqref{e:nov1}.
With these spectral properties established, we then show that such waves are orbitally stable provided that a so-called Vakhitov-Kolokolov condition
is satisfied.  By exploiting various scaling symmetries of the Novikov equation, we provide a detailed analysis of the Vakhitov-Kolokolov condition 
in the present case and show it is equivalent to the positivity of a function which is directly numerically computable.  In particular,
our analysis shows that the Vakhitov-Kolokolov condition always holds in the present case, indicating that all smooth solitary wave solutions
of the Novikov equation \eqref{e:nov1} are nonlinearly orbitally stable.

\

The outline of our paper is as follows.   In Section \ref{S:Ex} study the existence and basic properties of smooth solitary wave solutions of the Novikov
equation \eqref{e:nov2}.  In particular, in Section \ref{S:exist}  we use phase plane analysis to establish the existence of smooth solitary wave solutions
of \eqref{e:nov2}.  We then show in Section \ref{S:cp} that such waves can be considered as critical points of an explicit action functional $\Lambda$,
which is the foundation of our stability analysis.  In Section \ref{S:SpecL} we consider the spectrum of the operator $\mathcal{L}=\frac{\delta^2\Lambda}{\delta^2 m}$
evaluated at a smooth solitary wave, considered as an (unconstrained) operator acting on $L^2(\RM)$.  The goal of this section
is to provide analytical results and strong numerical support for Assumption \ref{H1}, which effectively states that $\mathcal{L}$ acting on $L^2(\RM)$ has a simple eigenavlue
at the origin and a single negative eigenavlue.  Specifically, in Section \ref{51} we determine the essential spectrum of $\mathcal{L}$ analytically and determine
a lower bound on the possible negative point spectrum for the operator $\mathcal{L}$.  Equipped with these results, in Section \ref{52} we use numerical
Evans function techniques to present an  investigation of the point spectrum of $\mathcal{L}$.  The results of this numerical investigation is
that the spectrum of $\mathcal{L}$ indeed satisfies Assumption \ref{H1} for all smooth solitary waves of \eqref{e:nov2}.  With Assumption \ref{H1} in hand,
our main stability analysis is contained in Section \ref{S:Orbital}, culminating in our main result, Theorem \ref{T:main}, establishing orbital
stability of smooth solitary wave solutions of \eqref{e:nov2} provided a particular Vakhitov-Kolokolov condition holds.  We conclude
our study in Section \ref{S:VK} with a detailed study of the derived Vakhitov-Kolokolov condition, showing in particular that it holds for all smooth
solitary waves of \eqref{e:nov2}.

\

\noindent
{\bf Acknowledgments:}  The work of BE and MAJ was partially funded by the NSF under grant DMS-2108749.  
The work of SL was supported by a Collaboration Grant for Mathematicians from the Simons Foundation (award \# 420847).
The authors would like to thank Blake Barker for helpful conversations regarding the use  STABLAB \cite{StabLab} in our numerical Evans function investigation.
We also thank Ming Chen, Alex Himonas, and Dmitry Pelinovsky for helpful discussions.

%
%

\section{Smooth Solitary Wave Solutions}\label{S:Ex}

In this section, we begin by using phase plane analysis to prove the existence of a one-parameter family of (even) smooth
solitary wave solutions, which can be parameterized by the asymptotic value at spatial infinity.  With this in hand, we will
then demonstrate that these waves can be considered as critical points of a linear combination of conserved quantities
associated to \eqref{e:nov2}.

\subsection{Existence of Smooth Solitary Waves}\label{S:exist}

We seek traveling wave solutions of \eqref{e:nov2} of the form $u(x,t)=\phi(x-ct)$, which will correspond to stationary solutions of the PDE
\[
u_t-u_{xxt}-c(u_x-u_{xxx})+4u^2u_x=u^2u_{xxx}+3uu_xu_{xx}.
\]
After some rearranging, we see that such stationary solutions necessarily satisfy the ODE
\[
(\phi^2-c)\left(\phi-\phi''\right)'+3\phi\phi'\left(\phi-\phi''\right)=0.
\]
By elementary ODE theory, it follows that $\phi\in C^\infty(\RM)$ provided that either $\phi^2(x)<c$ or $\phi^2(x)>c$ for all $x\in\RM$.
Throughout our work, we will assume that
\begin{equation}\label{e:cond1}
\phi^2(x)<c~~{\rm for~all}~x\in\RM.
\end{equation}
Equipped with this condition, we note that multiplying by the integrating factor $(\phi-\phi'')^{-1/3}$ that the above ODE can be written as
\[
\frac{d}{dx}\left((\phi-\phi'')^{2/3}\left(c-\phi^2\right)\right)=0.
\]
By further enforcing the condition that
\begin{equation}\label{e:cond2}
\phi-\phi''>0~~{\rm for~all}~x\in\RM
\end{equation}
the above can be further reduced to the ODE
\begin{equation}\label{e:profile1}
\phi-\phi''=\frac{a}{(c-\phi^2)^{3/2}}
\end{equation}
where here $a>0$ is a constant of integration\footnote{Existence in the case when the conditions \eqref{e:cond1} or \eqref{e:cond2} are not met is discussed in Remark \ref{R:exist} below.}.  Integrating once more, we see that the above may be further reduced to quadrature to
\begin{equation}\label{e:profile_quad}
\frac{1}{2}(\phi')^2 = E+\frac{1}{2}\phi^2 - \frac{a\phi}{c\sqrt{c-\phi^2}}.
\end{equation}

\begin{figure}[t!]
\begin{center}
\includegraphics[scale=0.7]{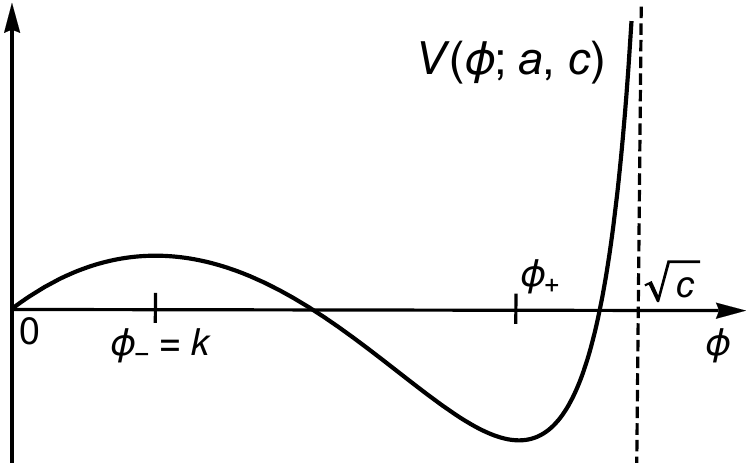}
\caption{A schematic  drawing of the effective potential $V$ for a given $c>0$ and an admissible value of $a\in\left(0,\frac{3\sqrt{3}c^2}{16}\right)$.
Note that the parameter $a$ is defined as a function of $k$ in \eqref{kcond} below.}
\end{center}
\end{figure}

It follows that the existence of solitary wave solutions of \eqref{e:nov2} can be determined by studying the potential function
\[
V(\phi;a,c):=-\frac{1}{2}\phi^2+ \frac{a\phi}{c\sqrt{c-\phi^2}}.
\]
Indeed,  it follows that smooth solitary wave solutions with profile $\phi\in C^\infty(\RM)$ satisfying the conditions \eqref{e:cond1}-\eqref{e:cond2}
correspond to homoclinic orbits of \eqref{e:profile_quad}.  To guarantee the existence of such a homoclinic orbit, we need to guarantee 
that the potential $V(\cdot;a,c)$ has both a local maximum and local minimum in the interval $\phi\in(-\sqrt{c},\sqrt{c})$.  To this end,
observe that $V'(\phi;a,c)=0$ if and only if
\begin{equation}\label{uzk}
D(\phi;a,c)=a-\phi(c-\phi^2)^{3/2}=0
\end{equation}
which, since $a>0$ by above, implies that such critical points can only occur when $\phi>0$.  Further, we note that
\[
D(0;a,c)=D(\sqrt{c};a,c)=a>0
\]
for all $a,c>0$, while the only root of $D'(\cdot;a,c)=0$ on $(0,\sqrt{c})$, corresponding to an inflection point of $V$, occurs at $\phi=\sqrt{c}/2$.
Observing that 
\[
D\left(\sqrt{c}/2\right)=a-\frac{3\sqrt{3}c^2}{16},
\]
it follows by choosing $a,c>0$ to ensure that $D(\sqrt{c}/2)<0$ that on the domain $(-\sqrt{c},\sqrt{c})$ the effective potential $V(\cdot;a,c)$ will have exactly 
two critical points 
\[
0<\phi_-<\frac{\sqrt{c}}{2}<\phi_+<\sqrt{c}
\]
with $\phi_-$ being a local maximum and $\phi_+$ being a local minimum of $V$.  These local extrema yield, respectively, a saddle point $(\phi_-,0)$
and a nonlinear center $(\phi_+,0)$ for the second-order profile equation \eqref{e:profile1}.  By the above analysis, it follows that for each $c>0$ and each such
$a$ there exists a homoclinic orbit connecting the saddle point $(\phi_-,0)$, which corresponds to a smooth solitary wave solution of \eqref{e:nov2}.
Note by translation invariance that for each such $c>0$ and $a\in(0,\frac{3\sqrt{3}c^2}{16})$ the solitary wave profile satisfying $\phi'(0)=0$
is uniquely defined.

Taken together, the above considerations establish the following result.

\begin{theorem}
\label{exth}
For each fixed $c>0$ and  $0<a<\frac{3\sqrt{3}c^2}{16}$ there exists a unique, smooth solitary wave solution $u(x,t)=\phi(x-ct)$ 
of \eqref{e:nov2} which satisfies the conditions 
\[
\phi(x)^2<c~~{\rm and}~~\phi(x)-\phi''(x)>0~~{\rm for ~all}~~x\in\RM,
\]
along with $\phi'(0)=0$.  Further, we note that $\phi$ is even and monotonically decreasing on $(0,\infty)$.
\end{theorem}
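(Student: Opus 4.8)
The plan is to convert the qualitative phase-plane picture already developed in the text into a rigorous existence-and-uniqueness statement by exhibiting the homoclinic orbit explicitly via the quadrature \eqref{e:profile_quad}. First I would fix $c>0$ and $a\in\bigl(0,\tfrac{3\sqrt 3 c^2}{16}\bigr)$, and record that, as shown above, on $(-\sqrt c,\sqrt c)$ the function $D(\phi;a,c)=a-\phi(c-\phi^2)^{3/2}$ has exactly two zeros $0<\phi_-<\sqrt c/2<\phi_+<\sqrt c$, so that $V(\cdot;a,c)$ has a strict local maximum at $\phi_-$ and a strict local minimum at $\phi_+$. The equilibrium value for the solitary wave is the constant $\phi\equiv\phi_-$: one checks directly from \eqref{e:profile1} that $\phi_-$ solves $\phi-\phi''=a(c-\phi^2)^{-3/2}$ with $\phi''=0$ precisely because $D(\phi_-;a,c)=0$. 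Set $E=V(\phi_-;a,c)$ (equivalently, $-E$ is the energy level through the saddle) so that \eqref{e:profile_quad} reads $\tfrac12(\phi')^2 = V(\phi_-;a,c)-V(\phi;a,c) =: W(\phi)$, and note $W(\phi_-)=0$, $W'(\phi_-)=0$, $W''(\phi_-)=-V''(\phi_-;a,c)>0$.

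Next I would locate the turning point of the homoclinic orbit. Since $\phi_-$ is a nondegenerate local max of $V$, $W$ is positive on a punctured neighborhood of $\phi_-$; I would check that on the interval $(\phi_*,\phi_-)$, where $\phi_*\in(-\sqrt c,\phi_-)$ is the unique smaller solution of $W(\phi)=0$ (its existence following because $V(\phi;a,c)\to-\infty$ only as $\phi\to\pm\sqrt c$ and $V$ has no critical point in $(-\sqrt c,\phi_-)$, so $V$ is strictly decreasing there, hence $V-V(\phi_-;a,c)$ changes sign exactly once), we have $W>0$, with $W(\phi_*)=0$ and $W'(\phi_*)<0$ (simple zero). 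The homoclinic orbit is then obtained by the standard construction: define, for $x\geq 0$,
\[
x = \int_{\phi(x)}^{\phi_-}\frac{ds}{\sqrt{2W(s)}},
\]
which implicitly and monotonically defines $\phi$ on $[0,\infty)$ with $\phi(0)=\phi_*$, $\phi'(0)=0$, $\phi$ strictly increasing, and $\phi(x)\to\phi_-$ as $x\to\infty$; the improper integral converges at the upper limit because $W$ vanishes quadratically there (giving exponential approach to $\phi_-$) and is finite at the lower limit because $W$ vanishes only to first order at $\phi_*$. Extend to $x<0$ by the reflection $\phi(-x)=\phi(x)$; since $W$ depends only on $\phi$, this even extension solves \eqref{e:profile_quad}, hence \eqref{e:profile1}, on all of $\RM$. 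Smoothness $\phi\in C^\infty(\RM)$ then follows from the remark in the text: along this orbit $c-\phi^2>0$ (as $\phi$ ranges in $[\phi_*,\phi_-]\subset(-\sqrt c,\sqrt c)$) and $\phi-\phi''=a(c-\phi^2)^{-3/2}>0$, so \eqref{e:cond1}--\eqref{e:cond2} hold and elementary ODE regularity applies; monotone decay on $(0,\infty)$ is immediate from $\phi'<0$ there.

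For uniqueness, I would argue that any smooth solitary wave satisfying \eqref{e:cond1}--\eqref{e:cond2} and $\phi'(0)=0$ necessarily satisfies \eqref{e:profile1} for some $a>0$ and then \eqref{e:profile_quad} for some $E$; the asymptotic condition $\phi,\phi',\phi''\to\phi_\infty$ with $\phi_\infty-0=a(c-\phi_\infty^2)^{-3/2}$ forces $\phi_\infty$ to be a critical point of $V$, and a homoclinic (rather than periodic or unbounded) orbit can only limit on the saddle, so $\phi_\infty=\phi_-$ and $E=V(\phi_-;a,c)$. The value $\phi(0)$ is then forced to be the turning point $\phi_*$ since $\phi'(0)=0$ and $\phi\not\equiv\phi_-$; uniqueness of the solution to the first-order autonomous ODE \eqref{e:profile_quad} away from the turning point, together with the pinned initial data $(\phi(0),\phi'(0))=(\phi_*,0)$, then yields uniqueness. (One must also rule out $\phi(0)=\phi_+$ or an orbit encircling the center, which is excluded because such orbits are periodic, not homoclinic, i.e. not solitary.) The main obstacle I anticipate is the bookkeeping around the turning point $\phi_*$ — verifying it is a simple zero of $W$ and that $\phi_*\in(-\sqrt c,\phi_-)$ strictly (so the orbit stays in the region where the profile equation is regular), which amounts to a careful sign analysis of $V$ on $(-\sqrt c,\phi_-)$ using that $D(\phi;a,c)>0$ there; everything else is the routine time-map construction of a homoclinic orbit for a conservative second-order ODE.
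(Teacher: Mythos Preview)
Your overall strategy---recovering the homoclinic orbit from the quadrature \eqref{e:profile_quad} and pinning down the turning point---is exactly the phase-plane argument the paper sketches, only made more explicit. However, you have placed the homoclinic loop on the wrong side of the saddle, and this is a genuine error, not just a sign slip.

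Concretely: you claim that $V$ has no critical point on $(-\sqrt c,\phi_-)$ and is therefore \emph{strictly decreasing} there, so that $W(\phi)=V(\phi_-)-V(\phi)$ has a simple zero $\phi_*\in(-\sqrt c,\phi_-)$. The first clause is true but the conclusion is backwards. Since $V'(\phi)=D(\phi;a,c)/(c-\phi^2)^{3/2}$ and $D>0$ on the whole interval $(-\sqrt c,\phi_-)$ (indeed $D(\phi)=a-\phi(c-\phi^2)^{3/2}$ is positive for $\phi\le 0$ and for $0<\phi<\phi_-$ by the definition of $\phi_-$ as the first zero), the potential $V$ is strictly \emph{increasing} on $(-\sqrt c,\phi_-)$, rising from $-\infty$ to the local maximum $V(\phi_-)$. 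Hence $W>0$ throughout $(-\sqrt c,\phi_-)$ and there is no turning point $\phi_*$ there at all: an orbit leaving the saddle to the left runs off to the singularity at $\phi=-\sqrt c$ and is not bounded.

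The actual homoclinic loop goes to the \emph{right}: on $(\phi_-,\phi_+)$ one has $D<0$ so $V$ decreases, and on $(\phi_+,\sqrt c)$ one has $D>0$ so $V$ increases to $+\infty$; thus $W$ has a unique simple zero $\phi_M\in(\phi_+,\sqrt c)$, and the orbit satisfies $\phi(0)=\phi_M>\sqrt c/2$, $\phi(x)\searrow\phi_-$ as $x\to\infty$. This is consistent with the theorem's assertion that $\phi$ is monotonically decreasing on $(0,\infty)$ (and with the paper's later use of $\phi(0)>\sqrt c/2$). Your write-up is in fact internally inconsistent on this point: you construct $\phi$ ``strictly increasing'' on $[0,\infty)$ with $\phi(0)=\phi_*<\phi_-$, then a few lines later assert ``monotone decay on $(0,\infty)$ is immediate from $\phi'<0$ there.'' Once you relocate the turning point to $\phi_M>\phi_+$ and flip the monotonicity, the rest of your argument (convergence of the time-map integral, $C^\infty$ regularity, uniqueness via the saddle being the only possible asymptotic state) goes through essentially unchanged.
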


\begin{remark}
Associated with the smooth, even solitary waves $\phi$ determined in Theorem \ref{exth} we define their momentum densities as $\mu=\phi-\phi''$.  By construction,
we note that such $\mu$ are smooth, even functions that satisfy $\mu'(0)=0$ and $\mu''(0)<0$.  Further, we note that
\[
\lim_{x\to\pm\infty}\mu(x) = \lim_{x\to\pm\infty}\phi(x).
\]
Throughout our work, we will interchangeably refer to both $\phi$ and $\mu$ as being smooth solitary wave solutions of \eqref{e:nov2}.
\end{remark}

\begin{remark}\label{R:exist}
The above analysis culminating in Theorem \ref{exth} considers the case when the wave profile $\phi$ satisfies both conditions \eqref{e:cond1} and \eqref{e:cond2}.
In the case when $\phi^2<c$ and $m<0$, smooth solitary waves are shown to exist by a similar phase plane analysis in  \cite{ZXO}.
While it would be interesting to extend our analysis to this case as well, we do not pursue it here.
Finally, we note that \cite{PL} shows that no smooth solitary waves exist when $\phi^2>c$.
\end{remark}

\

Before moving on, we note that the solitary wave solutions constructed above generally exist on non-zero constant backgrounds.  Indeed, since they are constructed
as homoclinic orbits associated to the second-order ODE \eqref{e:profile1}, we see for a given $c>0$ that  the solitary waves can be 
parameterized by the single parameter $k:=\phi_1(a)$.  Note, in particular, that for each $k$ we have
\[
\lim_{x\to\pm\infty}\phi(x;k)=k,~~{\rm and}~~\lim_{x\to\pm\infty}\partial_x^\ell\phi(x;k)=0~~{\rm for~all}~\ell\geq 1
\]
and, in particular, this convergence occurs at an exponential rate.
It follows that for a fixed wavespeed $c>0$
the parameters $a$ and $E$ associated to a given solitary wave $\phi(x;k)$ can be defined in terms of the asymptotic value $k$ via
\begin{equation}\label{kcond}
a=k(c-k^2)^{3/2},\;\;E=\frac{k^2(c-2k^2)}{2c},
\end{equation}
where we used \eqref{e:profile_quad} and the expression for $a$ above to obtain the second equation.
Recalling \eqref{uzk}, we further note that for a given $a\in\RM$ the corresponding asymptotic state $k$ is the smallest of the two solutions 
to the first equation of \eqref{kcond} on the interval $(0,c)$.
Given that the maximum value of the function $a(k)$ defined in \eqref{kcond} occurs at $k_{max}=\sqrt{c}/2$, it follows that the asymptotic value of $\phi(\cdot;k)$ 
necessarily satisfies
\begin{equation}\label{kcond2}
0<k < \frac{\sqrt{c}}{2}.
\end{equation}
Taken together, this gives the following result.

\begin{corollary}
For a fixed $c>0$ the smooth solitary waves $\phi$ constructed in Theorem \ref{exth} can be smoothly parameterized by the 
asymptotic end state $k$ satisfying \eqref{kcond2}.  
\end{corollary}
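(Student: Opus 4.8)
The plan is to reduce the statement to an elementary analysis of the scalar map $k\mapsto a(k)$ appearing in \eqref{kcond}, together with the uniqueness already furnished by Theorem \ref{exth}. Since Theorem \ref{exth} produces, for each admissible value of $a$, exactly one smooth solitary wave (normalized by $\phi'(0)=0$), it suffices to show that the assignment $k\mapsto a$ is a smooth diffeomorphism of the interval \eqref{kcond2} onto $\left(0,\frac{3\sqrt3 c^2}{16}\right)$, and then to upgrade this parameter-level statement to smoothness of the map $k\mapsto\phi(\cdot\,;k)$ taking values in a suitable function space.

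First I would establish the diffeomorphism at the level of parameters. From \eqref{kcond} we have $a(k)=k(c-k^2)^{3/2}$, which is smooth on $(0,\sqrt c)$ with
\[
a'(k)=(c-k^2)^{3/2}-3k^2(c-k^2)^{1/2}=(c-k^2)^{1/2}(c-4k^2).
\]
Hence $a'(k)>0$ on $0<k<\sqrt c/2$, so $a(\cdot)$ is strictly increasing there; together with $a(0^+)=0$ and $a(\sqrt c/2)=\frac{3\sqrt3 c^2}{16}$ this shows $a(\cdot)$ maps \eqref{kcond2} bijectively onto $\left(0,\frac{3\sqrt3 c^2}{16}\right)$, and the inverse function theorem gives a smooth inverse $a\mapsto k(a)$. (Equivalently, this recovers the fact noted after \eqref{uzk} that $k$ is the smaller of the two roots of $a=\psi(c-\psi^2)^{3/2}$ in $(0,\sqrt c)$, depending smoothly on $a$.) The remaining profile parameter $E(k)=\frac{k^2(c-2k^2)}{2c}$ is manifestly smooth in $k$. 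Composing with Theorem \ref{exth} then shows that the family of smooth solitary waves is in smooth bijective correspondence with the set of admissible $k$.

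It remains to argue that $k\mapsto\phi(\cdot\,;k)$ is smooth as a map into a function space adapted to the exponential convergence $\phi(x;k)\to k$ as $x\to\pm\infty$; concretely, that $k\mapsto\phi(\cdot\,;k)-k$ is smooth into $H^s(\RM)$ for each $s\geq 0$ (or into an exponentially weighted Sobolev space if one wants the decay rate tracked uniformly). For this I would set $w=\phi-k$ and rewrite \eqref{e:profile1} as the second-order ODE $w''=w+k-a(k)\bigl(c-(w+k)^2\bigr)^{-3/2}$ with $w,w'\to 0$ at $\pm\infty$ and $w'(0)=0$. Linearizing at the rest state $w=0$ gives $w''=\lambda(k)^2 w$ with
\[
\lambda(k)^2=1-\frac{3k^2}{c-k^2}=\frac{c-4k^2}{c-k^2}>0,
\]
so the rest state is a hyperbolic saddle, uniformly for $k$ in compact subsets of \eqref{kcond2}. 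Standard results on the smooth dependence of homoclinic orbits on parameters — or, directly, a contraction-mapping/implicit-function-theorem construction of the homoclinic orbit in the weighted space, using that the nonlinearity is a smooth function of $(w,k)$ and that the saddle is hyperbolic — then yield smoothness of $k\mapsto w(\cdot\,;k)$; the normalization $w'(0)=0$ together with the monotonicity asserted in Theorem \ref{exth} fixes the translate uniquely.

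The main obstacle is this last step: the non-compactness of $\RM$ and, more subtly, the fact that the background state $k$ over which the wave is homoclinic is itself the varying parameter, so one cannot directly differentiate the profile relation \eqref{e:profile1} in a single fixed space. The weighted-space bookkeeping — choosing a weight compatible with the $k$-dependent decay rate $\lambda(k)$ and verifying that the relevant superposition operators are smooth there — is where the real work lies; the scalar computations above are routine.
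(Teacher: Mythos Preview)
Your proposal is correct and, at the parameter level, matches the paper's argument exactly: the paper's ``proof'' of this corollary is simply the paragraph preceding it, which observes that $a(k)=k(c-k^2)^{3/2}$ has its unique maximum on $(0,\sqrt c)$ at $k=\sqrt c/2$, so that the smaller root $k=k(a)$ ranges smoothly over $(0,\sqrt c/2)$ as $a$ ranges over the admissible interval --- precisely your computation $a'(k)=(c-k^2)^{1/2}(c-4k^2)>0$.

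The one substantive difference is that you go further than the paper does. The paper is content to treat the corollary as a parameter-level statement (smooth bijection $k\leftrightarrow a$, composed with Theorem~\ref{exth}) and never explicitly addresses smoothness of $k\mapsto\phi(\cdot;k)$ in a function space; smooth dependence is used later (e.g.\ differentiating the profile equation in $k$) but is left to standard ODE theory. Your weighted-space discussion, including the correct linearized decay rate $\lambda(k)^2=(c-4k^2)/(c-k^2)$, is a legitimate and more careful way to justify this, and your candid identification of the $k$-dependent background as the only nontrivial point is accurate. For the purposes of this paper, however, that level of detail is not expected --- the parameter computation you give in your first paragraph is already all the paper supplies.
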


Throughout most of our work, we will consider the solitary waves $\phi$, as well as their associated momentum densities $\mu=\phi-\phi''$ as smooth functions
of the parameter $k$.  When appropriate, we will use the notation $\phi(\cdot;k)$ and $\mu(\cdot;k)$ to denote this explicit dependence.

\begin{remark}
For a fixed $c>0$, we note that in the limit $k\to 0^+$ the smooth solitary wave $\phi(\cdot;k)$ limits to the one-peakon solution
\[
\phi_{\rm peakon}(x) = \sqrt{c}e^{-|x|}.
\]
The stability of the one-peakon solution of Novikov has been studied previously in \cite{Chen2021,Lafortune2024,Palacio2020,Palacios2021}, where they were shown to be spectrally, linearly, and orbitally stable to perturbations in $H^1(\RM)$,  
Additionally, they were shown in \cite{Chen2019,Lafortune2024} to be both spectrally and linearly unstable to perturbations in $W^{1,\infty}(\RM)$.
Further, in the limit $k\to(\sqrt{c}/2)^-$ the smooth solitary wave $\phi(\cdot;k)$ limits to the constant solution $\phi_{\rm cst}(x)=\frac{\sqrt{c}}{2}$.  In the present work,
we are interested in the stability of the smooth, non-constant solitary wave solutions of \eqref{e:nov2} only.
\end{remark}

Throughout the forthcoming work, we consider the wave speed $c>0$ as being fixed and our solitary waves $\phi(x;k)$ and their corresponding momentum densities $\mu(x;k)$ as being
smoothly parameterized by the asymptotic value $k$ satisfying \eqref{kcond2}.

\subsection{Smooth Solitary Waves as Critical Points}\label{S:cp}

Let $c>0$ be fixed.  In the previous section, we used elementary phase plane analysis to prove the existence of a one-parameter family of smooth, even
solitary wave solutions $\phi(x;k)$ of \eqref{e:profile1}.  In this section, we show that these solutions correspond to critical points of a particular linear combination
of conserved quantities for the Novikov equation \eqref{e:nov2}.

To this end, we note that the Novikov equation is known to formally have the following conserved quantities:
\[
\widetilde{\mathcal{E}}(m) := \int_\RM um~dx~~\widetilde{F}_1(m) := \int_{\RM} m^{2/3}dx~~\widetilde{F}_2(m) := \int_{\RM}\left(m^{-8/3}m_x^2+9m^{-2/3}\right)dx,
\]
where we note $\widetilde{\mathcal{E}}$ may be considered well-defined as a function of $m$, instead of both $m$ and $u$, since $u=(1-\partial_x^2)^{-1}m$.  
Of course, the above functionals are only well-defined for solutions 
on a zero background.  When solutions of \eqref{e:nov2} are considered on the non-zero constant background state $m(x,t)\to k$ as $x\to\pm\infty$, the conserved
quantities must be redefined.  For a fixed $k>0$, we define the set
\[
X_k:=\left\{m-k\in H^1(\RM):m(x)>0~{\rm for~all}~x\in\RM\right\}
\]
and redefine the conserved quantities for $m\in X_k$ as
\eqnn{
\mathcal{E}(m):=\int_\RM\left(m(1-\partial_x^2)^{-1}m-k^2\right)dx,~~F_1(m):=\int_\RM\left(m^{2/3}-k^{2/3}\right)dx
}
and
\eqnn{
F_2(m):=\int_\RM\left(m^{-8/3}m_x^2+9\left(m^{-2/3}-k^{-2/3}\right)\right)dx.
}

\begin{remark}\label{R:wellposed}
We note that local well-posedness of the Novikov equation \eqref{e:nov2} is known for initial data $u(0,\cdot)\in H^s(\RM)$ with $s>\frac{3}{2}$: see, for example,
\cite{Ni2011, Himonas2012}. Moreover, it is known that the local solutions exist for all time
and do not break assuming that the associated momentum density $m$ is initially strictly positive \cite{Wu2012}. 
However, here, we are interested in well-posedness for solutions on a nonzero background. In the case of the Camassa-Holm equation \eqref{CH}, this amounts to 
study the equation with the addition of dispersion since, under the change of variable $u(x,t)=v(x-kt,t)+k$, the Camassa-Holm is transformed into a version of itself with a term 
proportional to $u_x$ added on the LHS (see for example \cite[Introduction]{Danchin}). Using standard techniques, local and global well-posedness is established in \cite{Escher} for the Camassa-Holm with various terms added to the equation, including one that is proportional to $u_x$. 
In the case of the Novikov equation, under the change of variable $u(x,t)=v(x-k^2t,t)+k$, in addition to the dispersion term proportional to $u_x$, a linear combination of the three quadratic terms on the right-hand-side of the Camassa-Holm equation 
\eqref{CH} are added to the equation. While we do not have a work to cite with an existence result for this more general Novikov equation, we will assume that we have local well-posedness for initial data $v(0,\cdot)\in H^s(\RM)$ with $s>\frac{3}{2}$, and global
existence if the momentum is initially positive. Indeed, the techniques of the papers cited above \cite{Ni2011, Himonas2012, Wu2012} should apply as well, although the work would be technically ``messier'' due to the extra terms. 
 This well-posedness theory for $v\in H^s(\RM)$ with $s>\frac{3}{2}$ translates to functions $m\in X_k$ if $s=3$.   Throughout our work, we thus assume that the local and
global well-posedness holds for the space $X_k$ considered.
\end{remark}

We now show that the profile equations \eqref{e:profile1} and \eqref{e:profile_quad} satisfied by our smooth solitary waves $\mu(\cdot;k)$ correspond to 
the Euler-Lagrange equation of the action functional
\begin{equation}\label{e:Lyapunov}
\Lambda(m):=\omega_0\mathcal{E}(m)+\omega_1 F_1(m)+ F_2(m),~~m\in X_k
\end{equation}
for appropriate choices of the Lagrange multipliers $\omega_1$ and $\omega_2$.  This is accomplished in the following result.

\begin{proposition}\label{P:cp}
For a fixed $c>0$, the solitary wave $\mu(\cdot;k)$ of \eqref{e:profile1} and \eqref{e:profile_quad} is a critical point of the action functional $\Lambda$
provided we take
\begin{equation}\label{e:Lmult}
\omega_0=-9a^{-2/3}~~{\rm and}~~\omega_1=9ca^{-4/3}(2E+c),
\end{equation}
where here $a$ and $E$ are defined in terms of $k$ via \eqref{kcond}.  
\end{proposition}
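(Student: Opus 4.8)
The plan is to compute the first variation $\delta\Lambda/\delta m$ explicitly and check that it vanishes at $m=\mu(\cdot;k)$ exactly when $(\omega_0,\omega_1)$ is given by \eqref{e:Lmult}. The first step is to record the three variational derivatives. Since $(1-\partial_x^2)^{-1}$ is self-adjoint on $L^2(\RM)$, one has $\delta\mathcal{E}/\delta m=2(1-\partial_x^2)^{-1}m$, which equals $2\phi$ at $m=\mu=(1-\partial_x^2)\phi$; likewise $\delta F_1/\delta m=\tfrac23 m^{-1/3}$; and applying the Euler--Lagrange operator to the density $m^{-8/3}m_x^2+9m^{-2/3}$ appearing in $F_2$ (the additive constant $-9k^{-2/3}$ does not affect the variation, and all boundary terms vanish since $\mu-k$ and its derivatives decay exponentially at $\pm\infty$) gives $\delta F_2/\delta m=\tfrac83 m^{-11/3}m_x^2-2m^{-8/3}m_{xx}-6m^{-5/3}$. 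Hence the Euler--Lagrange equation $\delta\Lambda/\delta m=0$, evaluated at $m=\mu$, is the scalar identity
\[
2\omega_0\phi+\tfrac23\omega_1\mu^{-1/3}+\tfrac83\mu^{-11/3}\mu_x^2-2\mu^{-8/3}\mu_{xx}-6\mu^{-5/3}=0,
\]
and everything reduces to showing that the profile relations \eqref{e:profile1}--\eqref{e:profile_quad} force this identity precisely for the asserted multipliers.

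The second step is to eliminate $\mu$ in favor of $\phi$. Writing $\Phi:=c-\phi^2>0$, equation \eqref{e:profile1} reads $\mu=a\Phi^{-3/2}$, so $\mu^{-1/3}=a^{-1/3}\Phi^{1/2}$, $\mu^{-1}=a^{-1}\Phi^{3/2}$ and $\mu^{-5/3}=a^{-5/3}\Phi^{5/2}$, while differentiating yields $\mu_x=3a^{-2/3}\phi\phi'\mu^{5/3}$; a further differentiation expresses $\mu_{xx}$ through $\phi,\phi',\phi''$ and powers of $\mu$, and the substitution $\phi''=\phi-\mu$ removes $\phi''$. Feeding all of this into the displayed equation, multiplying by $a^{5/3}$, grouping terms and using the trivial identity $\phi^2+\Phi=c$ to collapse the fractional powers, one is left with
\[
c(\phi')^2\Phi^{1/2}+c\Phi^{3/2}=\Bigl(\tfrac13\omega_0 a^{5/3}+a\Bigr)\phi+\tfrac19\omega_1 a^{4/3}\Phi^{1/2}.
\]
Finally, inserting the quadrature $(\phi')^2=2E+\phi^2-\tfrac{2a\phi}{c}\Phi^{-1/2}$ from \eqref{e:profile_quad} produces a term $-2a\phi$ that merges with the bare-$\phi$ term, and one further use of $\phi^2+\Phi=c$ reduces the whole equation to
\[
\Bigl(\tfrac19\omega_1 a^{4/3}-c(2E+c)\Bigr)\sqrt{c-\phi^2}+\Bigl(\tfrac13\omega_0 a^{5/3}+3a\Bigr)\phi=0.
\]

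Since $\phi$ is non-constant along the homoclinic orbit, the functions $\phi$ and $\sqrt{c-\phi^2}$ are linearly independent on it, so both bracketed coefficients must vanish; solving $\tfrac13\omega_0 a^{5/3}+3a=0$ and $\tfrac19\omega_1 a^{4/3}=c(2E+c)$ gives exactly $\omega_0=-9a^{-2/3}$ and $\omega_1=9ca^{-4/3}(2E+c)$, i.e.\ \eqref{e:Lmult}, and conversely with these values the same reduction shows $\delta\Lambda/\delta m\equiv0$ along the profile. I expect no conceptual obstacle: the only genuine work is getting $\delta F_2/\delta m$ right and then carefully tracking the fractional powers of $a$ and $\Phi$ through the substitution. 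The point worth flagging is the ``miracle'' that every monomial other than the bare $\phi$ and $\Phi^{1/2}$ terms factors through $\phi^2+\Phi$, which is the constant $c$; this is precisely what collapses the nonlocal, fractional-power structure to the two-term algebraic identity above, which then pins down $\omega_0$ and $\omega_1$.
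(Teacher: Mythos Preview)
Your proof is correct and follows essentially the same route as the paper: compute the three variational derivatives to obtain the Euler--Lagrange equation, substitute $\mu=a(c-\phi^2)^{-3/2}$ and the quadrature \eqref{e:profile_quad} to eliminate $\mu$ and the derivatives of $\phi$, and reduce everything to a linear combination of $\phi$ and $(c-\phi^2)^{1/2}$ whose coefficients must vanish separately. Your final identity $\bigl(\tfrac19\omega_1 a^{4/3}-c(2E+c)\bigr)\sqrt{c-\phi^2}+\bigl(\tfrac13\omega_0 a^{5/3}+3a\bigr)\phi=0$ is exactly the paper's displayed equation divided by $18$, and the intermediate bookkeeping (in particular the collapse via $\phi^2+\Phi=c$) is spelled out more explicitly than in the paper but is otherwise identical.
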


\begin{proof}
Fix $c>0$ and note by straightforward calculations that the equation $\frac{\delta\Lambda}{\delta  m}(\mu)=0$ is equivalent to the integro-differential equation
\begin{equation}\label{e:der1}
2\omega_0 (1-\partial_x^2)^{-1}\mu+\frac{2\omega_1}{3\mu^{1/3}}-2\mu^{-8/3}\mu''-6\mu^{-5/3}+\frac{8\mu'^2}{3\mu^{11/3}}=0.
\end{equation}
Note that if $\mu$ is a weak solution to \eqref{e:der1} then we  have $\mu\in C^\infty(\RM)$, meaning to prove our result
it is sufficient to show that the above integro-differential equation is satisfied whenever $\mu$ satisfies the profile equation  \eqref{e:profile1}-\eqref{e:profile_quad}
and the constants $\omega_0$ and $\omega_1$ are chosen as stated.

To continue, we rewrite everything in \eqref{e:der1} in terms of the $\phi$ variable, replacing $\mu=\phi-\phi''$ and 
using the profile equations \eqref{e:profile1}-\eqref{e:profile_quad}  to express all derivatives of
$\phi$ with functions of $\phi$ only.  The calculations are messy, but after simplifications one finds that \eqref{e:der1} can be rewritten as
\[
6\omega_0a^{5/3}\phi+2\omega_1a^{4/3}(c-\phi^2)^{1/2}-18\left(c(2E+c)(c-\phi^2)^{1/2}-3a\phi\right)=0.
\]
By grouping the terms $(c-\phi^2)^{1/2}$ and $\phi$ separately, this immediately gives the unique choice for the Lagrange multipliers given in \eqref{e:Lmult}.
\end{proof}

Before we continue, it is important to note that, even though the action functional $\Lambda$ has seemingly two Lagrange multipliers, the fact that the underlying solitary waves
depend only on the single parameter $k\in(0,\sqrt{c}/2)$ implies that the Lagrange multipliers are smooth functions of only the the single variable $k$.  
Further, we note that first variations of the functionals $\mathcal{E}$, $F_1$ and $F_2$  acting on $X_k$ are not defined independently of each other due to the non-zero
boundary conditions at infinity.

Proposition \ref{e:Lmult} establishes that the smooth solitary waves $\mu(\cdot;k)$ constructed in Theorem \ref{exth} are critical points of the functional \eqref{e:Lyapunov}, where the 
Lagrange multipliers $\omega_0$ and $\omega_1$ are defined in terms of $\mu$ via \eqref{e:Lmult}.  Our next goal is to attempt to characterize the nature of the critical
point $\mu$ as a local minimum, maximum or a saddle point.  To this end, we note that since $\mu\in X_k$ is uniformly bounded below by $k>0$ by construction, we have that
\[
\mu(\cdot;k)+\widetilde{m}\in X_k
\]
provided that $\|\widetilde{m}\|_{H^1(\RM)}$ is sufficiently small.  With this in mind, we have the following result.

\begin{corollary}\label{C:expand}
There exists a constant $\eps_0>0$ sufficiently small such that for every $\widetilde{m}\in H^1(\RM)$ satisfying $\|\widetilde{m}\|_{H^1}\leq\eps_0$ we have
\eqnn{
\Lambda(\mu+\widetilde{m})-\Lambda(\mu)=\left<\mathcal{L}\widetilde{m},\widetilde{m}\right>+\mathcal{R}(\widetilde{m})
}
where here $\mathcal{L}:=\frac{\delta^2\Lambda}{\delta m^2}(\mu)$ is the closed, densely defined linear operator on $L^2(\RM)$ given explicitly by
\begin{equation}\label{Ldef}
\begin{aligned}
\mathcal{L}&=\frac{2}{3}\left(-3\frac{\partial}{\partial x}\left(\mu^{-8/3}\frac{\partial}{\partial x}\right)+8\mu''\mu^{-11/3}-\frac{44}{3}(\mu')^2\mu^{-14/3}+15 \mu^{-8/3}\right)\\
&\qquad-\frac{2\omega_1}{9}\mu^{-4/3}+2\omega_0(1-\partial_x^2)^{-1} 
\end{aligned}
\end{equation}
and where $\mathcal{R}(\widetilde{m})$ is  the remainder term satisfying $\|\mathcal{R}(\widetilde{m})\|_{H^1}\leq C_0\|\widetilde{m}\|_{H^1}^3$ for some 
$\widetilde{m}$-independent positive  constant $C_0>0$.
\end{corollary}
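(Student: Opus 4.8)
The plan is to establish the Taylor expansion of $\Lambda$ about the critical point $\mu$ directly, identifying the quadratic term with the Hessian $\mathcal{L}$ and absorbing everything of third order or higher into the remainder $\mathcal{R}$. Since $\mu\in X_k$ is bounded uniformly below by $k>0$ and is smooth with all derivatives bounded (exponential decay to the constant $k$ from the remark following Theorem~\ref{exth}), there is $\eps_0>0$ so small that $\mu+\widetilde m>k/2>0$ pointwise whenever $\|\widetilde m\|_{H^1}\le\eps_0$; this keeps us safely inside $X_k$ and away from the singularities of the nonlinear terms $m^{2/3}$, $m^{-8/3}$, $m^{-2/3}$. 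First I would treat each of the three functionals $\mathcal{E}$, $F_1$, $F_2$ separately. The functional $\mathcal{E}(m)=\int(m(1-\partial_x^2)^{-1}m-k^2)\,dx$ is exactly quadratic in $m$, so its expansion terminates: $\mathcal{E}(\mu+\widetilde m)-\mathcal{E}(\mu)=2\int \widetilde m\,(1-\partial_x^2)^{-1}\mu\,dx+\int\widetilde m\,(1-\partial_x^2)^{-1}\widetilde m\,dx$, using self-adjointness of $(1-\partial_x^2)^{-1}$ on $L^2(\RM)$. There is no remainder here, and the quadratic piece contributes the term $2\omega_0(1-\partial_x^2)^{-1}$ to $\mathcal{L}$.

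Next I would expand $F_1$ and $F_2$ using the pointwise Taylor expansions of the relevant smooth functions on the interval where $m$ ranges. For $F_1$, write $g(s)=s^{2/3}$ and expand $g(\mu+\widetilde m)=g(\mu)+g'(\mu)\widetilde m+\tfrac12 g''(\mu)\widetilde m^2+r_1$, where $g'(\mu)=\tfrac23\mu^{-1/3}$, $g''(\mu)=-\tfrac29\mu^{-4/3}$, and $r_1$ is the integral-form remainder $\widetilde m^2\int_0^1 (1-t)\big(g''(\mu+t\widetilde m)-g''(\mu)\big)\,dt$, bounded pointwise by $C|\widetilde m|^3$ because $g'''$ is bounded on $[k/2,\sup\mu+\eps_0]$. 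Integrating, the linear term $\tfrac23\int\mu^{-1/3}\widetilde m\,dx$ combines with the linear terms from $\mathcal{E}$ and $F_2$ and vanishes by the critical point equation \eqref{e:der1}; the quadratic term gives $-\tfrac{2\omega_1}{9}\mu^{-4/3}$ in $\mathcal{L}$ after multiplying by $\omega_1$. For $F_2$ I would do the analogous thing with the two-variable function $(p,q)\mapsto p^{-8/3}q^2+9p^{-2/3}$ evaluated at $(p,q)=(\mu,\mu')$ perturbed by $(\widetilde m,\widetilde m')$, collecting second-order terms $\mu^{-8/3}(\widetilde m')^2$, $-\tfrac{16}{3}\mu^{-11/3}\mu'\,\widetilde m\,\widetilde m'$, $\tfrac{44}{3}\mu^{-14/3}(\mu')^2\widetilde m^2$, $-3\mu^{-8/3}\widetilde m^2$ (coming from $\tfrac12\partial_p^2(p^{-8/3}q^2)|_{q=\mu'}+\tfrac12\cdot 9\partial_p^2 p^{-2/3}$), integrating the cross term by parts to convert $\widetilde m\,\widetilde m'$ into $-\partial_x(\cdots)\widetilde m^2$ plus a $\partial_x(\mu^{-8/3})\,(\widetilde m')\widetilde m$-type term that I would also integrate by parts, and then reading off that the self-adjoint realization of these quadratic forms is precisely the displayed operator $\tfrac23\big(-3\partial_x(\mu^{-8/3}\partial_x)+8\mu''\mu^{-11/3}-\tfrac{44}{3}(\mu')^2\mu^{-14/3}+15\mu^{-8/3}\big)$ once the pieces are reorganized using $\partial_x(\mu^{-8/3})=-\tfrac83\mu^{-11/3}\mu'$. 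This bookkeeping — matching the symmetric quadratic form arising from the expansion with the operator \eqref{Ldef} via integration by parts — is where most of the care is needed, but it is routine given the explicit profile.

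For the remainder estimate I would collect all terms of order three and higher: the cubic and higher remainders $r_1$ from $F_1$ and the analogous $r_2$ from the $F_2$ integrand, each of which is pointwise bounded by $C(|\widetilde m|^3+|\widetilde m|^2|\widetilde m'|+|\widetilde m||\widetilde m'|^2+|\widetilde m'|^3)$ with $C$ depending only on $k$, $\|\mu\|_{C^1}$ and $\eps_0$, since the derivatives of the nonlinearities of order three are bounded on the relevant compact range of arguments. The claimed bound is $\|\mathcal{R}(\widetilde m)\|_{H^1}\le C_0\|\widetilde m\|_{H^1}^3$, but $\mathcal{R}$ as written is a scalar (the difference $\Lambda(\mu+\widetilde m)-\Lambda(\mu)-\langle\mathcal{L}\widetilde m,\widetilde m\rangle$), so I read this as $|\mathcal{R}(\widetilde m)|\le C_0\|\widetilde m\|_{H^1}^3$; the estimate then follows from Hölder together with the Sobolev embedding $H^1(\RM)\hookrightarrow L^\infty(\RM)$, which lets us pull out one factor $\|\widetilde m\|_{L^\infty}\le C\|\widetilde m\|_{H^1}$ from each cubic monomial and bound the remaining quadratic factor by $\|\widetilde m\|_{H^1}^2$. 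The main obstacle, if any, is purely organizational: carefully tracking the integration-by-parts identities so that the quadratic form genuinely equals $\langle\mathcal{L}\widetilde m,\widetilde m\rangle$ with $\mathcal{L}$ in the exact form \eqref{Ldef}, and confirming that $\mathcal{L}$ so obtained is closed and densely defined on $L^2(\RM)$ — the latter because $-\partial_x(\mu^{-8/3}\partial_x)$ is a uniformly elliptic second-order operator with smooth bounded coefficients (bounded above and below since $\mu\in[k,\sup\mu]$), the middle terms are bounded multiplication operators, and $(1-\partial_x^2)^{-1}$ is bounded on $L^2(\RM)$, so $\mathcal{L}$ is a bounded perturbation of a self-adjoint operator with domain $H^2(\RM)$.
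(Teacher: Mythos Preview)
The paper does not actually supply a proof of this corollary: it is stated immediately after Proposition~\ref{P:cp} and the text then moves on to discuss its consequences. So there is nothing to compare against, and what you have written is precisely the standard argument the authors presumably had in mind --- expand each of $\mathcal{E}$, $F_1$, $F_2$ to second order in $\widetilde m$ using pointwise Taylor with integral remainder, kill the linear term via \eqref{e:der1}, identify the quadratic form with $\langle\mathcal{L}\widetilde m,\widetilde m\rangle$ after integration by parts, and bound the cubic remainder using $H^1\hookrightarrow L^\infty$. Your observation that $\mathcal{R}(\widetilde m)$ is a scalar and should be bounded in absolute value rather than in $H^1$ is a reasonable reading of what the statement must mean.

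Two small cautions on the bookkeeping. First, a couple of your intermediate coefficients are off: the $\tfrac12\partial_p^2(p^{-8/3}q^2)$ term gives $\tfrac{44}{9}\mu^{-14/3}(\mu')^2$, not $\tfrac{44}{3}$, and the contribution of $\tfrac12\cdot 9\,\partial_p^2(p^{-2/3})$ is $+5\mu^{-8/3}$, not $-3\mu^{-8/3}$. These do reassemble to \eqref{Ldef} once the cross term $-\tfrac{16}{3}\mu^{-11/3}\mu'\,\widetilde m\,\widetilde m'$ is integrated by parts (producing an $8\mu''\mu^{-11/3}$ piece and a $(\mu')^2\mu^{-14/3}$ correction), so the method is fine --- just redo the arithmetic. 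Second, as you suspected there is a conventional factor of $\tfrac12$ floating around between the second variation and the quadratic term of the Taylor expansion; be explicit about which convention you are using so that the displayed expansion matches the stated $\mathcal{L}$.
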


It follows that the nature of the critical point $\mu$ may be determined from the spectral properties of the linear operator $\mathcal{L}$ acting on $L^2(\RM)$.
Of course, since $\mathcal{L}$ contains the nonlocal term $(1-\partial_x^2)^{-1}$ the analysis of its spectrum, especially its point spectrum, is highly nontrivial\footnote{Note
one can obviously remove the nonlocal term by applying the bijective operator $(1-\partial_x^2)^{-1}$ to the operator $\mathcal{L}$.  This, however,
yields a fourth-order differential operator whose spectral theory is again highly nontrivial.}.

\section{Spectral Properties of $\mathcal{L}$}\label{S:SpecL}

The goal of this section is to study the spectrum of the linear operator $\mathcal{L}$ given in \eqref{Ldef} when considered as an operator
on $L^2(\RM)$.  By using a mix of analytical and numerical tools we provide strong evidence that the following assumption holds
for all of the smooth solitary wave solutions  in Theorem \ref{exth} of the Novikov equation.

\begin{assumption}\label{H1}
The spectrum of the symmetric operator $\mathcal{L}$, considered as a closed, densely defined linear operator on $L^2(\RM)$, consists of a simple negative eigenvalue,
a simple eigenvalue at the origin $\lambda=0$, and the rest of the spectrum is strictly positive and uniformly bounded away from $\lambda=0$.
\end{assumption}

Note that by translation invariance we clearly have
\[
\mathcal{L}\mu'=0,
\]
and hence $\lambda=0$ is indeed an eigenvalue for $\mathcal{L}$.  In Section \ref{51}, we provide some additional analytical results regarding the spectrum of $\mathcal{L}$.
More precisely, we determine the essential spectrum of $\mathcal{L}$ and obtain a lower bound for the spectrum of $\LM$ (see Proposition \ref{Bound1}).  
In Section \ref{52}, we use both of these results and numerical computations of the Evans to obtain strong evidence to that the operator $\LM$ has a one-dimensional kernel and only has one negative eigenvalue which is simple.

\subsection{Analytical Results on the Spectrum of $\LM$}
\label{51}

Throughout this section, fix $c>0$ and let  $\mu=\mu(\cdot;k)$ denote a smooth solitary wave solution to the Novikov equation, as constructed in Theorem \ref{exth}, 
with asymptotic value $k\in(0,\sqrt{c}/2)$.
Note that the linear operator $\mathcal{L}$ is closed on $L^2(\RM)$ with densely defined domain $H^2(\RM)$ and, further, it can be decomposed as
\begin{equation}\label{Ldef2}
\mathcal{L}=\mathcal{L}_0+2\omega_0(1-\partial_x^2)^{-1},
\end{equation}
where $\mathcal{L}_0$ is a Sturm-Liouville operator with smooth and bounded coefficients given by
\begin{equation}\label{LZdef}
\mathcal{L}_0=\frac{2}{3}\left(-3\frac{\partial}{\partial x}\left(\mu^{-8/3}\frac{\partial}{\partial x}\right)+8\mu''\mu^{-11/3}-\frac{44}{3}(\mu')^2\mu^{-14/3}+15 \mu^{-8/3}\right)
-\frac{2\omega_1}{9}\mu^{-4/3}.
\end{equation}
In the first result from this section, we analytically determine the essential spectrum of $\mathcal{L}$.

\begin{lemma}
\label{ess}
The essential spectrum of $\LM$ is positive, bounded away from the origin and given by the interval
\eqnn{
\sigma_{\text{ess}}(\LM)=\left[\sigma_0,\infty\right),~~{\rm where}~~\sigma_0:=\frac{8(c-4 k^{2})}{k^{\frac{8}{3}}\left(c-k^{2} \right)}>0.
}
\end{lemma}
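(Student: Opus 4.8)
The plan is to compute the essential spectrum of $\mathcal{L}$ via Weyl's theorem by identifying the constant-coefficient operator that $\mathcal{L}$ is asymptotic to at spatial infinity, and then to analyze the resulting Fourier multiplier. Recall from the remark following Theorem~\ref{exth} that $\mu(x;k)\to k$ exponentially fast as $x\to\pm\infty$, while all derivatives of $\mu$ decay exponentially; consequently the variable coefficients of $\mathcal{L}_0$ in \eqref{LZdef} converge exponentially fast to their constant limiting values. Writing $\mathcal{L}_\infty$ for the limiting constant-coefficient operator obtained by replacing $\mu\mapsto k$, $\mu'\mapsto 0$, $\mu''\mapsto 0$ in \eqref{Ldef}, the difference $\mathcal{L}-\mathcal{L}_\infty$ is a relatively compact perturbation: its coefficients are exponentially decaying multiplication operators (together with the lower-order contributions from the exponentially decaying part of $\mu$ inside the nonlocal term), and such perturbations of the asymptotic operator are relatively compact on $L^2(\RM)$. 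Hence by the Weyl essential spectrum theorem, $\sigma_{\rm ess}(\mathcal{L})=\sigma_{\rm ess}(\mathcal{L}_\infty)=\sigma(\mathcal{L}_\infty)$, the last equality because $\mathcal{L}_\infty$ has purely essential (continuous) spectrum.

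Next I would diagonalize $\mathcal{L}_\infty$ by the Fourier transform. In the limit the derivative-in-divergence-form term $-3\,\partial_x(\mu^{-8/3}\partial_x)$ becomes $3k^{-8/3}\,\partial_x^2$, the zeroth-order terms $8\mu''\mu^{-11/3}-\tfrac{44}{3}(\mu')^2\mu^{-14/3}+15\mu^{-8/3}$ become $15k^{-8/3}$, the term $-\tfrac{2\omega_1}{9}\mu^{-4/3}$ becomes $-\tfrac{2\omega_1}{9}k^{-4/3}$, and the nonlocal term $2\omega_0(1-\partial_x^2)^{-1}$ becomes the Fourier multiplier $2\omega_0(1+\xi^2)^{-1}$. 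Therefore
\[
\widehat{\mathcal{L}_\infty}(\xi)=\frac{2}{3}\left(3k^{-8/3}\xi^2+15k^{-8/3}\right)-\frac{2\omega_1}{9}k^{-4/3}+\frac{2\omega_0}{1+\xi^2}
= 2k^{-8/3}\xi^2 + 10k^{-8/3} - \frac{2\omega_1}{9}k^{-4/3} + \frac{2\omega_0}{1+\xi^2},
\]
and the essential spectrum is the closure of the range of this real-valued symbol over $\xi\in\RM$. The function is continuous in $\xi$; as $\xi\to\pm\infty$ it behaves like $2k^{-8/3}\xi^2\to+\infty$, so the range is an interval $[\sigma_0,\infty)$ whose left endpoint $\sigma_0$ is the global minimum of $\widehat{\mathcal{L}_\infty}(\xi)$ over $\xi\in\RM$.

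The remaining task, which is the main computational step, is to show that this minimum equals $\sigma_0=\dfrac{8(c-4k^2)}{k^{8/3}(c-k^2)}$ and in particular is positive. Here I would substitute the explicit values $\omega_0=-9a^{-2/3}$ and $\omega_1=9ca^{-4/3}(2E+c)$ from Proposition~\ref{P:cp}, together with $a=k(c-k^2)^{3/2}$ and $2E+c=\tfrac{k^2(c-2k^2)}{c}+c=\tfrac{(c-k^2)^2+k^2(c-k^2)}{c}\cdot\frac{1}{?}$ — more precisely one simplifies $2E+c$ using \eqref{kcond} so that $\omega_1 k^{-4/3}$ and $\omega_0$ reduce to clean powers of $k$ and $(c-k^2)$. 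After substitution, $\widehat{\mathcal{L}_\infty}(\xi)$ becomes $2k^{-8/3}\xi^2 + \beta + \dfrac{2\omega_0}{1+\xi^2}$ for an explicit constant $\beta=\beta(k,c)$; since $\omega_0<0$, the map $\xi^2\mapsto 2k^{-8/3}\xi^2 + \dfrac{2\omega_0}{1+\xi^2}$ is convex in $s:=\xi^2\geq 0$ (the first term is linear, and $s\mapsto 2\omega_0/(1+s)$ is convex because $\omega_0<0$), so its minimum over $s\geq 0$ is attained either at an interior critical point $s_\ast$ where $2k^{-8/3}=2|\omega_0|/(1+s_\ast)^2$, i.e. $1+s_\ast=|\omega_0|^{1/2}k^{4/3}$, or at $s=0$ if that critical point is negative. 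One then checks which case occurs using the constraint $0<k<\sqrt{c}/2$ from \eqref{kcond2} (I expect the interior critical point is the relevant one precisely in a sub-range, and the boundary case $\xi=0$ otherwise, but in either case the minimum value simplifies), evaluates the minimum, and verifies algebraically that it equals $\sigma_0$. The positivity of $\sigma_0$ is then immediate from $0<k<\sqrt{c}/2 \Rightarrow c-4k^2>0$ and $c-k^2>0$. The main obstacle is purely bookkeeping: carrying the substitutions for $\omega_0,\omega_1$ through cleanly and confirming that the minimization yields exactly the stated closed form rather than a more complicated expression; the functional-analytic part (Weyl's theorem plus Fourier diagonalization) is routine.
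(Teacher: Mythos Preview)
Your overall strategy coincides with the paper's: reduce to the constant-coefficient limit at $|x|\to\infty$ and identify the essential spectrum as the range of the resulting Fourier symbol. The paper reaches the constant-coefficient problem by first multiplying by the invertible operator $(1-\partial_x^2)$ to obtain a genuine differential operator and then invoking Henry's asymptotic-coefficient theorem, whereas you argue via Weyl's theorem and relative compactness of $\mathcal{L}-\mathcal{L}_\infty$; both routes are standard and lead to exactly the same symbol $\lambda(\xi)=2k^{-8/3}\xi^2+G_\infty+\dfrac{2\omega_0}{1+\xi^2}$.

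The substantive discrepancy is in the minimization step. Your convexity claim is incorrect: with $\omega_0<0$, the map $s\mapsto 2\omega_0/(1+s)=-2|\omega_0|/(1+s)$ has second derivative $4\omega_0/(1+s)^3<0$, so it is \emph{concave}, not convex. More importantly, there is never an interior critical point: differentiating $s\mapsto 2k^{-8/3}s+2\omega_0/(1+s)$ gives
\[
2k^{-8/3}-\frac{2\omega_0}{(1+s)^2}=2k^{-8/3}+\frac{2|\omega_0|}{(1+s)^2}>0\quad\text{for all }s\ge 0,
\]
so the symbol is strictly increasing in $s=\xi^2$ and the minimum is always attained at $\xi=0$. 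This is precisely the observation the paper makes (``since $F_\infty$ and $\omega_0$ are both negative, the global minimum occurs at $r=0$''), which immediately gives $\sigma_0=G_\infty+2\omega_0$ and the stated closed form after substituting \eqref{e:Lmult} and \eqref{kcond}. Your anticipated case split and the equation $2k^{-8/3}=2|\omega_0|/(1+s_\ast)^2$ (which has the wrong sign) would send you chasing a critical point that does not exist; once you correct the sign, the computation collapses to the single evaluation at $\xi=0$ and the bookkeeping you worried about disappears.
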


\begin{proof}
 The essential spectrum is defined \cite[Definition 2.2.3]{KP} to be the values of $\lambda$ such that the operator
 $$
 \LM-\lambda I
 $$
 considered on $L^2(\RM)$  is either not Fredholm or Fredholm with nonzero Fredholm index. The properties of an operator of not being Fredholm 
 or being  Fredholm with nonzero Fredholm index is not altered by the application of an invertible operator. Thus, for our case, we have that
 $$
\lambda\in \sigma_{\text{ess}}(\LM)\iff (1-\partial_{x}^2)(\LM-\lambda I)\text{ is not Fredholm or is Fredholm with nonzero Fredholm index.}
 $$
 
 Now, using the decomposition \ref{Ldef2} we note that since the operator
 $$
 (1-\partial_{x}^2)(\LM-\lambda I)=(1-\partial_{x}^2)(\LM_0-\lambda)+2\omega_0
 $$
 is a linear differential operator with asymptotically constant coefficients\footnote{Note here we are also using that the coefficients approach their 
 asymptotic value as $x\to\pm\infty$ at exponential rates.}, we can use the classical result of Henry \cite[Theorem A.2]{Henry81} which states that the essential spectrum is found by computing the spectrum of the asymptotic eigenvalue problem\footnote{See also \cite[Theorem 3.1.11]{KP}.}.  In particular, we have
 \begin{equation}\label{constess}
 \lambda\in\sigma_{\text{ess}}(\LM)\iff \left((1-\partial_{x}^2)(\LM_0^\infty-\lambda)+2\omega_0\right)v=0\text{ has a non-trivial bounded solution},
\end{equation}
 where $\LM_0^\infty$ is the constant-coefficient differential operator obtained by applying the limit $|x|\to\infty$ to $\LM_0$.
 Rewriting the operator $\mathcal{L}_0$ defined in \eqref{LZdef} as
\eq{
\LM_0=\partial_x\!\left(F\partial_x\right)+G,
}{LFG}
where 
\eq{
F:=-2\mu^{-8/3}\text{ and }G:=\frac{2}{9}\left(24\mu''\mu^{-11/3}-{44}\mu'^2\mu^{-14/3}+45\mu^{-8/3}-\omega_1\mu^{-4/3}\right),
}{FGdef}
we see that the limits as $|x|\to\infty$ of the expressions above are found by simply inserting the value $\mu=k$.  Additionally
using the expressions \eqref{kcond} to express the Lagrange multipliers $\omega_0$ and $\omega_1$ in \eqref{e:Lmult} as explicit
functions of $k$, we find that
\begin{equation}\label{FGinf}
\lim_{x\to\pm\infty}F(x) = F_\infty:=-2k^{-8/3}\text{ and }\lim_{x\to\pm\infty}G(x)=G_\infty:=\frac{8c-14 k^{2}}{\left(c-k^{2} \right) k^{\frac{8}{3}}}.
\end{equation}
It follows from \eqref{constess} that the essential spectrum of $\mathcal{L}$ thus consists of all values $\lambda\in\CM$ such that the ODE
\[
(1-\partial_x^2)\left(F_\infty\partial_x^2+G_\infty-\lambda\right)v + 2\omega_0 v=0
\]
has a non-trivial bounded solution.  Being constant coefficient, one can now use Fourier analysis to find that the essential spectrum 
consists of all the values of $\lambda\in\CM$ such that
$$
(1+r^2)(G_\infty-r^2F_\infty-\lambda)+2\omega_0=0,
$$
for some $r\in\R$.  Solving for $\lambda$, this implies that the essential spectrum consists of the image of the function $\lambda:\RM\to\CM$ defined explicitly by
$$
\lambda(r)=G_\infty-r^2F_\infty+\frac{2\omega_0}{1+r^2}.
$$
Note that $\lambda(r)$ is clearly an even, real-valued function tending to positive infinity as $r\to\infty$.  Further, noting that $F_\infty$ and $\omega_0$ are both
negative  due to \eqref{FGinf} and \eqref{e:Lmult}, the global minimum value of $\lambda(r)$ necessarily occurs
at $r=0$ with
$$ 
\lambda(0)=G_\infty+2\omega_0=\frac{8(c-4 k^{2})}{k^{\frac{8}{3}}\left(c-k^{2} \right)},
$$
which we note is strictly positive for all $k\in(0,\sqrt{c}/2)$.  This completes the proof.
 \end{proof}

We now turn to studying the point spectrum of $\mathcal{L}$, which we know consists of isolated eigenvalues of finite multiplicity.  
Our first main analytical result in this direction is Proposition \ref{Bound1}, which gives a lower bound on the point spectrum of $\LM$ that will end up being useful in Section \ref{52} 
for our numerical computations. To obtain this result, we write the operator $\LM$ as a sum of two operators, see \eqref{OSum} below,
and use the fact that  we are able to find a lower bound for the spectra of each of the two terms in that sum. 
For completeness, we also provide a bound obtained from energy estimate computations (see Proposition \ref{EEs}). However, in the next section, that bound is found to be numerically much larger than the one provided by by Proposition \ref{Bound1}.

\begin{lemma}
\label{Specsame}
The kernels of the operator $\mathcal{L}$ given in \eqref{Ldef} and of the operator
\eq{
\widetilde{\mathcal{L}}=\mathcal{L}_0+{2\omega_0}f,\;\;f:= \frac{a^{2/3}}{3\phi \mu^{5/3}},
}{LTdef}
have a nontrivial intersection, that is 
$$
\mu'\in\ker\!{(\widetilde{\mathcal{L}})}\cap \ker\!{({\mathcal{L}})},
$$
where $\mu$ is the traveling wave solution given in Theorem \ref{exth}.
\end{lemma}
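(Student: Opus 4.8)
The plan is to reduce the claim to the already-known identity $\mathcal{L}\mu'=0$ (recalled just after Assumption \ref{H1}, by translation invariance) together with an explicit identification of $(1-\partial_x^2)^{-1}\mu'$. Since $\mu=\phi-\phi''=(1-\partial_x^2)\phi$, one has $(1-\partial_x^2)^{-1}\mu=\phi$, and differentiating---using that $\partial_x$ commutes with $(1-\partial_x^2)^{-1}$---gives $(1-\partial_x^2)^{-1}\mu'=\phi'$. Feeding this into the decomposition $\mathcal{L}=\mathcal{L}_0+2\omega_0(1-\partial_x^2)^{-1}$ from \eqref{Ldef2}, the relation $\mathcal{L}\mu'=0$ becomes $\mathcal{L}_0\mu'=-2\omega_0\phi'$.

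The next step is to check directly that the multiplier $f=\dfrac{a^{2/3}}{3\phi\mu^{5/3}}$ appearing in \eqref{LTdef} satisfies $f\mu'=\phi'$. This is where the profile equation \eqref{e:profile1}, $\mu=a(c-\phi^2)^{-3/2}$, is used: it lets one rewrite $a^{2/3}\mu^{-5/3}=a^{-1}(c-\phi^2)^{5/2}$, and differentiating the same relation gives $\mu'=3a\phi\phi'(c-\phi^2)^{-5/2}$; multiplying and cancelling yields $f\mu'=\phi'$ as an identity in $x$. (The multiplier $f$ in \eqref{LTdef} is evidently chosen precisely so that this holds.)

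Combining the two computations then finishes the proof: $\widetilde{\mathcal{L}}\mu'=\mathcal{L}_0\mu'+2\omega_0 f\mu'=-2\omega_0\phi'+2\omega_0\phi'=0$, so $\mu'\in\ker(\widetilde{\mathcal{L}})$, while $\mu'\in\ker(\mathcal{L})$ is the translation-invariance statement. Hence $\mu'\in\ker(\widetilde{\mathcal{L}})\cap\ker(\mathcal{L})$, as claimed.

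The only computational ingredient is the one-line verification $f\mu'=\phi'$, so I do not anticipate a genuine obstacle; the one point that deserves a word of justification is that $(1-\partial_x^2)^{-1}\mu'=\phi'$ holds as an $L^2$ (not merely formal) identity. This is legitimate because $\phi-k$, and therefore $\phi'$ together with all its derivatives, decays exponentially as $|x|\to\infty$, so $\phi'\in H^2(\RM)$ and $(1-\partial_x^2)\phi'=\mu'\in L^2(\RM)$, putting $\mu'$ in the domain of $(1-\partial_x^2)^{-1}$ with the stated preimage.
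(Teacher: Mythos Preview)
Your proof is correct and follows essentially the same approach as the paper: both arguments reduce to the identity $f\mu'=\phi'=(1-\partial_x^2)^{-1}\mu'$, obtained by differentiating the profile relation $\mu=a(c-\phi^2)^{-3/2}$, so that the nonlocal term $2\omega_0(1-\partial_x^2)^{-1}$ and the multiplication operator $2\omega_0 f$ agree on $\mu'$. Your added remark justifying $(1-\partial_x^2)^{-1}\mu'=\phi'$ as an $L^2$ identity (via the exponential decay of $\phi-k$) is a welcome clarification that the paper leaves implicit.
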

\begin{proof}
The relation 
\eq{\mathcal{L}\mu'=0}{rel2}
is a consequence of the invariance of the Novikov equation with respect to space translation. It can be checked in two ways. The simplest is to 
use the fact that $\LM$ is the second Fr\'echet derivative of the functional $\Lambda$ evaluated at the traveling wave solution. As a consequence, the relation  \eqref{rel2} is obtained by differentiating \eqref{e:der1} with respect to $x$. The more direct way is to apply the operator 
$\LM$ to $\mu'$ and use the profile equations \eqref{e:profile1}-\eqref{e:profile_quad} to show that we obtain \eqref{rel2}.

To show that $\mu'\in\ker\!{(\widetilde{\mathcal{L}})}$, we rewrite the relation \eqref{e:profile1} for the traveling wave solution $m=\mu(x-ct)$ and $u=\phi(x-ct)=(1-\partial_x^2)^{-1}\mu$ and the constant $a=D^{3/2}$ as
$$
\phi^2-c=\frac{a^{2/3}}{\mu^{2/3}}.
$$
Differentiating the equation above with respect to $x$, we obtain
$$
2\phi (1-\partial_x^2)^{-1}\mu'=-\frac{2a^{2/3}}{3\mu^{5/3}}\mu'.
$$
The operators $\LM$ and $\widetilde{\LM}$ defined in \eqref{Ldef} and \eqref{LTdef} differs only in their last term.  The relation above shows that the two last terms, when applied to $\mu'$, are the same. We thus have that $\mu'$ also is in $\ker(\widetilde{\LM})$. 
\end{proof}

\begin{remark}
In essence, the point of the above proof is that while $\mathcal{L}$ is non-local, its application to $\mu'$ gives precisely $\widetilde{\mathcal{L}}\mu'$, i.e. the
operators $\mathcal{L}$ and $\widetilde{\mathcal{L}}$ agree when acting on $\mu'$.  Since $\widetilde{\mathcal{L}}$ is a local, Sturm-Liouville operator, its point
spectrum is arguably easier to study than that of $\mathcal{L}$.
\end{remark}

For the next lemma, we consider the operator arising from the difference $\LM-\widetilde{\LM}$. To do so, we obtain the following 
expression for the function $f$ defined in \eqref{LTdef}
\eq{
f= \frac{a^{2/3}}{3\phi \mu^{5/3}}= \frac{(c-\phi^2)^{5/2}}{3a \phi},
}{fdef}
where the last equality was obtained by applying the profile equation \eqref{e:profile1} with $m=\mu$ and $u=\phi$.

\begin{lemma}
\label{Lemspec}
The spectrum of the operator
\begin{equation}\label{Sdef}
\mathcal{S}:=f-(1-\partial_x^2)^{-1}
\end{equation}
acting on $L^2(\RM)$ is real and satisfies
\begin{equation}\label{Sspec}
\sigma\left(\mathcal{S}\right)\subset \left[f_0-1,f_\infty\right],
\end{equation}
where here
\[
f_0:=f(0)\text{ and }f_\infty:=\lim_{x\to\infty}f(x),
\]
with the function $f$ defined in \eqref{fdef} and
\eq{0<f_0<1\text{ and }f_\infty>1.}{fcond} 
\end{lemma}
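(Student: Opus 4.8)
The plan is to regard $\mathcal{S}=f-(1-\partial_x^2)^{-1}$ as a \emph{bounded self-adjoint} operator on $L^2(\RM)$ and to localize its spectrum via the standard numerical-range characterization, after first establishing the elementary bounds \eqref{fcond} from the phase-plane picture of Section \ref{S:exist}.

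First I would verify \eqref{fcond}. Writing $f=g(\phi)$ with $g(s):=\tfrac{(c-s^2)^{5/2}}{3as}$, a quick logarithmic-derivative computation shows $g$ is strictly decreasing on $(0,\sqrt c)$; since $\phi$ is even and decreasing on $(0,\infty)$ (Theorem \ref{exth}) with $\phi(0)=:P$ its maximum value and $\phi(\pm\infty)=k$, this gives $f_0=g(P)$, $f_\infty=g(k)$, and $f_0\le f(x)\le f_\infty$ for every $x\in\RM$. Substituting $a=k(c-k^2)^{3/2}$ from \eqref{kcond} yields $f_\infty=\tfrac{c-k^2}{3k^2}$, so $f_\infty>1$ is precisely the condition $c>4k^2$, i.e.\ \eqref{kcond2}; clearly $f_0>0$. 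For $f_0<1$ I would use two facts from the existence analysis: by the shape of the effective potential, the homoclinic turning point $P=\phi(0)$ (where $E=V(P)$, $P>\phi_-$) satisfies $P\in(\phi_+,\sqrt c)$, and from Section \ref{S:exist} one has $\phi_+>\tfrac{\sqrt c}{2}$ and $D(\cdot;a,c)>0$ on $(\phi_+,\sqrt c)$. Hence $P>\tfrac{\sqrt c}{2}$, so $c-P^2<3P^2$, and $D(P;a,c)=a-P(c-P^2)^{3/2}>0$. Multiplying these inequalities gives $(c-P^2)^{5/2}=(c-P^2)(c-P^2)^{3/2}<3P^2(c-P^2)^{3/2}<3aP$, which is exactly $f_0=g(P)<1$.

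With \eqref{fcond} in hand, $f$ is a bounded, continuous, real-valued function, so multiplication by $f$ is bounded and self-adjoint on $L^2(\RM)$ with $f_0\le f\le f_\infty$, while $(1-\partial_x^2)^{-1}$ is the Fourier multiplier with symbol $(1+\xi^2)^{-1}\in(0,1]$, hence bounded, self-adjoint and nonnegative of operator norm $1$. Thus $\mathcal{S}$ is bounded and self-adjoint on all of $L^2(\RM)$, and in particular $\sigma(\mathcal{S})\subset\RM$. For $v\in L^2(\RM)$, Plancherel gives $\langle \mathcal{S}v,v\rangle=\int_\RM f|v|^2\,dx-\int_\RM\tfrac{|\widehat v(\xi)|^2}{1+\xi^2}\,d\xi$; using $f\le f_\infty$ and $\tfrac{1}{1+\xi^2}\ge0$ gives $\langle \mathcal{S}v,v\rangle\le f_\infty\|v\|_{L^2}^2$, while using $f\ge f_0$ and $\tfrac{1}{1+\xi^2}\le1$ gives $\langle \mathcal{S}v,v\rangle\ge(f_0-1)\|v\|_{L^2}^2$. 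Since $\mathcal{S}$ is bounded and self-adjoint, $\sigma(\mathcal{S})\subset\bigl[\inf_{\|v\|=1}\langle\mathcal{S}v,v\rangle,\ \sup_{\|v\|=1}\langle\mathcal{S}v,v\rangle\bigr]\subset[f_0-1,f_\infty]$, which is \eqref{Sspec}.

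The only place that requires input beyond routine functional analysis is the inequality $f_0<1$; everything else is the textbook numerical-range bound for a bounded self-adjoint operator. Accordingly, the main (and rather mild) obstacle is simply to make precise the two phase-plane facts used above --- that the homoclinic turning point $P=\phi(0)$ lies strictly beyond the center $\phi_+$ and that $\phi_+>\sqrt c/2$ --- both of which are immediate from the construction in Theorem \ref{exth} together with the sign analysis of $D(\cdot;a,c)$ already carried out in Section \ref{S:exist}.
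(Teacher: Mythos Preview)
Your argument is correct and follows essentially the same route as the paper: both treat $\mathcal S$ as a bounded self-adjoint operator on $L^2(\RM)$ and localize its spectrum via the quadratic form. The paper phrases this as ``the spectrum of a difference of self-adjoint operators lies in the difference of the spectral intervals,'' which is exactly the numerical-range estimate you write out explicitly with Plancherel; the computation of $f_\infty$ is likewise identical.

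The only substantive difference is in the verification of $f_0<1$. The paper substitutes the profile equation \eqref{e:profile_quad} at $x=0$ (where $\phi'(0)=0$) to simplify $f_0$ algebraically, and then appeals to $\phi(0)>\sqrt c/2$ together with $E>0$. You instead exploit the phase-plane fact that the turning point $P=\phi(0)$ lies beyond the center $\phi_+$, so that $D(P;a,c)=a-P(c-P^2)^{3/2}>0$, and combine this multiplicatively with $c-P^2<3P^2$ (from $P>\sqrt c/2$) to obtain $(c-P^2)^{5/2}<3aP$. Your route is a bit more geometric and sidesteps the algebra; both ultimately rest on the same inputs $P>\phi_+>\sqrt c/2$ already established in Section~\ref{S:exist}.
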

\begin{proof}
It can be checked from the last expression in \eqref{fdef} that $f$ is a decreasing function of $\phi$. Thus the supremum
value of $f$ is $f_\infty$, while its minimum value is attained at $x=0$. As a consequence, the spectrum of the multiplication operation by $f$ consists 
of the range of $f$ given by the interval $\left[f_\infty,f_0\right]$. 

Further, the operator $(1-\partial_x^2)^{-1}$ can be written as a convolution as
\eq{
(1-\partial_x^2)^{-1}v=p(x)\ast v,\;\;p(x):=\frac{e^{-|x|}}{2},
}{convw}
since $p(x)$ as defined above is the Green's function for the operator $(1-\partial_x^2)$. Taking the Fourier transform of this operator 
transforms convolution into multiplication by $\hat{p}$. Now, for the multiplication operator, the spectrum is the range of of the Fourier transform $\hat{p}$.
Since the Fourier transform is unitary, we get that spectrum of $(1-\partial_x^2)^{-1}$
 is also the range of $\hat{p}$, which can be checked to be $[0,1]$. Given the form of the self-adjoint operator $\mathcal{S}$ written as a difference between two 
 self-adjoint operators whose spectrum has been determined above, the spectrum of $\mathcal{S}$ lies in the difference of the two intervals as given in \eqref{Sspec}.
 

Finally, it remains to establish the bounds \eqref{fcond}.  To this end, note that since 
$\phi\to k$ as $|x|\to\infty$ we have
 \eqnn{
 f_\infty=\frac{(c-k^2)^{5/2}}{3ak}=\frac{c-k^2}{3k^2}>\frac{c-c/4}{3c/4}=1,
 }
 where we used the definition of $f_\infty$ for the first equality, \eqref{e:profile_quad} to express $a=a(k)$ for the second equality, 
 \eqref{kcond2} for the inequality, and \eqref{kcond} for the final equality. For the bound on $f_0$, we use 
 \eqref{fdef} evaluated at $x=0$ to find
 \eqnn{
 f_0&=\frac{(c-\phi(0)^2)^{5/2}}{3a\phi(0)}=\frac{c-\phi(0)^2}{3\left(E+\phi(0)^2/2\right)}<\frac{3c/4}{3\left(E+c/4\right)}<1,
 }
 where here we used the definition of $f$ for the first equality, \eqref{e:profile_quad} evaluated at $x=0$ for the second equality, 
 the fact that $\phi(0)>\sqrt{c}/2$ for the first inequality, and the positivity of $E$ for the last inequality.
\end{proof}

Next, we use the above to establish a lower bound on the spectrum of $\LM$, which we use for our numerical computations.

\begin{proposition}
\label{Bound1}
The spectrum of $\LM$ is bounded from below by a negative number and is included in the following interval 
\begin{equation}\label{Boundexp} 
\sigma(\LM)\subset \left[\sigma_1,\infty\right),~~{\rm with}~~\sigma_1:=\lambda_-(\widetilde{\LM})+2\omega_0(1-f_0)< 0,
\end{equation}
where here $f_0$ is the value of the expression given in \eqref{fdef} at $x=0$, $\lambda_-(\widetilde{\LM})$ is the unique 
negative eigenvalue of the operator $\widetilde{\LM}$ given in \eqref{LTdef}, and $\omega_0$ is given in \eqref{e:Lmult}.
\end{proposition}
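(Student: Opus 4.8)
The plan is to write $\mathcal{L}$ as the Sturm--Liouville operator $\widetilde{\mathcal{L}}$ of Lemma \ref{Specsame} plus a bounded, self-adjoint correction whose spectrum was located in Lemma \ref{Lemspec}, and then run a quadratic-form comparison. First I would record the operator identity underlying everything: combining \eqref{Ldef2}, \eqref{LTdef} and \eqref{Sdef}, and using that $\mathcal{L}$ and $\widetilde{\mathcal{L}}$ differ only in their zeroth-order term, one obtains
\begin{equation*}
\mathcal{L}=\widetilde{\mathcal{L}}-2\omega_0\,\mathcal{S}
\end{equation*}
as operators on the common domain $H^2(\RM)$, since $\mathcal{S}=f-(1-\partial_x^2)^{-1}$ is precisely the discrepancy (up to the factor $2\omega_0$) between the last term of $\mathcal{L}$ and that of $\widetilde{\mathcal{L}}$. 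Here $\widetilde{\mathcal{L}}$ is self-adjoint because $\mathcal{L}_0=\partial_x(F\partial_x)+G$ is a uniformly elliptic Sturm--Liouville operator on $H^2(\RM)$ (recall $F=-2\mu^{-8/3}$ is smooth and bounded away from $0$, since $k\le\mu$ and $\mu$ has finite limits at $\pm\infty$) perturbed by the bounded multiplication operator $2\omega_0 f$, while $\mathcal{S}$ is bounded and self-adjoint by Lemma \ref{Lemspec}; as $\mathcal{L}$ is itself self-adjoint on $H^2(\RM)$, a quadratic-form lower bound on $\mathcal{L}$ will immediately translate into the claimed spectral inclusion.

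Next I would assemble the two ingredients. By Sturm--Liouville oscillation theory, since $\mu'\in\ker\widetilde{\mathcal{L}}$ by Lemma \ref{Specsame} and $\mu'$ has exactly one zero (because $\mu$ is even and strictly decreasing on $(0,\infty)$ by Theorem \ref{exth}), the eigenvalue $\lambda=0$ is the second eigenvalue of $\widetilde{\mathcal{L}}$; hence $\widetilde{\mathcal{L}}$ has exactly one negative eigenvalue $\lambda_-(\widetilde{\mathcal{L}})$, which is simple and, its essential spectrum being a strictly positive half-line (computed by an argument analogous to Lemma \ref{ess} upon sending $\mu\to k$), coincides with $\inf\sigma(\widetilde{\mathcal{L}})$. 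Thus $\langle\widetilde{\mathcal{L}}v,v\rangle\ge\lambda_-(\widetilde{\mathcal{L}})\|v\|_{L^2}^2$ for all $v\in H^2(\RM)$, while Lemma \ref{Lemspec} gives $\langle\mathcal{S}v,v\rangle\ge(f_0-1)\|v\|_{L^2}^2$. Combining these with the decomposition above and using $\omega_0<0$, so that $-2\omega_0>0$,
\begin{equation*}
\langle\mathcal{L}v,v\rangle=\langle\widetilde{\mathcal{L}}v,v\rangle-2\omega_0\langle\mathcal{S}v,v\rangle\ge\left(\lambda_-(\widetilde{\mathcal{L}})+2\omega_0(1-f_0)\right)\|v\|_{L^2}^2=\sigma_1\|v\|_{L^2}^2
\end{equation*}
for every $v\in H^2(\RM)$, which forces $\sigma(\mathcal{L})\subset[\sigma_1,\infty)$. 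Finally, $\sigma_1<0$ because $\lambda_-(\widetilde{\mathcal{L}})<0$ and, by \eqref{fcond} together with $\omega_0<0$, the term $2\omega_0(1-f_0)$ is also strictly negative; both quantities are finite, so $\sigma_1$ is a genuine negative lower bound.

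The computations are routine; the two points requiring care are (i) verifying that the operator identity and all three quadratic forms are simultaneously meaningful on $H^2(\RM)$ — which reduces to noting $\mathcal{S}$ is bounded and $\widetilde{\mathcal{L}},\mathcal{L}$ share the domain $H^2(\RM)$ — and (ii) justifying that $\lambda_-(\widetilde{\mathcal{L}})$ is the infimum of $\sigma(\widetilde{\mathcal{L}})$ rather than an interior eigenvalue, for which the essential-spectrum location of $\widetilde{\mathcal{L}}$ and the nodal count of $\mu'$ via Sturm--Liouville theory are the key inputs. I expect step (ii) to be the only genuinely substantive point; the rest is bookkeeping with Lemmas \ref{Specsame} and \ref{Lemspec}.
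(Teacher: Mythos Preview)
Your proposal is correct and follows essentially the same approach as the paper: both decompose $\mathcal{L}=\widetilde{\mathcal{L}}-2\omega_0\mathcal{S}$, invoke Sturm--Liouville oscillation theory on $\mu'$ to identify $\lambda_-(\widetilde{\mathcal{L}})$ as the infimum of $\sigma(\widetilde{\mathcal{L}})$, and then combine the lower bound on $\mathcal{S}$ from Lemma~\ref{Lemspec} with $\omega_0<0$ via a quadratic-form (equivalently, sum-of-self-adjoint-operators) argument. Your write-up is somewhat more careful about domain and essential-spectrum bookkeeping, but the substance is identical.
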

\begin{proof}
The operator $\widetilde{\LM}$ given in \eqref{LTdef} is a Sturm-Liouville operator with bounded coefficients defined on the whole line. As stated in Lemma \ref{Specsame}, the zero eigenvalue corresponds to the eigenvector $\mu'$, which has one zero at the origin. This implies that one and only one eigenvalue is negative, which we denote by $\lambda_-(\widetilde{\LM})$. We have that the self-adjoint operator $\LM$ can be written as the sum of two other self-adjoint operators as 
\eq{
\LM=\widetilde{\LM}-2\omega_0\mathcal{S},}{OSum} 
with $\mathcal{S}$ defined in \eqref{Sdef}. Since, from Lemma \ref{Specsame}, the spectrum of $\mathcal{S}$ is bounded from below by $f_0-1$, the spectrum of $\widetilde{\LM}$  is bounded from below $\lambda_-(\widetilde{\LM})$, and since $\omega_0<0$, we have that the spectrum of $\LM$ is bounded from below by $\lambda_-(\widetilde{\LM})-2\omega_0(f_0-1)$, as specified by the proposition.
\end{proof}

\begin{remark}
We note that while one may hope to use the decomposition of $\mathcal{L}$ provided in \ref{OSum} to analytically verify Assumption \ref{H1}, this
turns out not to work.  See Remark \ref{R:pos} in the next section.  
\end{remark}

Proposition \ref{Bound1} gives a lower bound gives a bound on the spectrum of $\LM$, which will be useful for our numerical computations in Section \ref{52} below.
We note that, as an alternative to the approach to the lower bound taken above, one could also attempt to achieve such a bound through
the use of energy estimates.  For completeness, next we provide the result of applying direct energy estimates.  As we will see in our
numerical calculations, however, the bound obtained in Proposition \ref{Bound1} is in fact much better: see Remark \ref{bad_bound} below.

%
%

\begin{proposition}
\label{EEs}
Let $\lambda$ be an eigenvalue for the operator $\LM$ given in \eqref{Ldef}. Then it satisfies the inequality
\eq{
\lambda\geq 2\omega_0 -\sup_{x\in\RM}|G(x)|,
}{inf}
where here the function $G$ is given in \eqref{FGdef} and $\omega_0$ is the (necessarily negative) Lagrange multiplier given in \eqref{e:Lmult}.
\end{proposition}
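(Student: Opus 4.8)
The plan is to run a standard energy estimate on the eigenvalue equation $\mathcal{L}v = \lambda v$, testing against $v$ in $L^2(\RM)$ and exploiting the sign structure of the various terms. First I would write $\mathcal{L} = \mathcal{L}_0 + 2\omega_0(1-\partial_x^2)^{-1}$ as in \eqref{Ldef2}, and use the form $\mathcal{L}_0 = \partial_x(F\partial_x) + G$ from \eqref{LFG}–\eqref{FGdef}. For a normalized eigenfunction $v\in H^2(\RM)$ with $\|v\|_{L^2}=1$, pairing $\mathcal{L}v=\lambda v$ with $v$ gives
\[
\lambda = -\langle F\partial_x v,\partial_x v\rangle + \langle Gv,v\rangle + 2\omega_0\langle (1-\partial_x^2)^{-1}v,v\rangle .
\]
Recall $F = -2\mu^{-8/3}<0$, so $-\langle F\partial_x v,\partial_x v\rangle = \langle |F|\partial_x v,\partial_x v\rangle \ge 0$; this term is simply dropped. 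The middle term is bounded below by $-\sup_{x}|G(x)|\,\|v\|_{L^2}^2 = -\sup_x|G(x)|$. For the nonlocal term, since $(1-\partial_x^2)^{-1}$ has spectrum $[0,1]$ (as established in the proof of Lemma \ref{Lemspec} via the Fourier symbol $\hat p$, which ranges over $[0,1]$), we have $0\le \langle (1-\partial_x^2)^{-1}v,v\rangle \le \|v\|_{L^2}^2 = 1$; because $\omega_0<0$ by \eqref{e:Lmult}, the quantity $2\omega_0\langle (1-\partial_x^2)^{-1}v,v\rangle$ is bounded below by $2\omega_0$. Combining the three lower bounds yields $\lambda \ge 2\omega_0 - \sup_{x\in\RM}|G(x)|$, which is exactly \eqref{inf}.

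There is essentially no serious obstacle here; the only points requiring a word of care are (i) justifying that the eigenfunction $v$ lies in $H^2(\RM)$ so that all pairings and the integration by parts producing $-\langle F\partial_x v,\partial_x v\rangle$ are legitimate — this follows since $\mathcal{L}$ is closed on $L^2(\RM)$ with domain $H^2(\RM)$ (stated just after \eqref{Ldef}), and (ii) noting that $G$ is bounded, so $\sup_x|G(x)|<\infty$, which is clear from \eqref{FGdef} together with the fact that $\mu$ is smooth, bounded, and bounded away from zero (bounded below by $k>0$). One should also remark that $\mathcal{L}$ is symmetric on $L^2(\RM)$, so $\lambda$ is real and the pairing $\langle\mathcal{L}v,v\rangle$ is real-valued; if $v$ is complex-valued the estimates above apply verbatim with $|\partial_x v|^2$ and $|v|^2$ in place of the real squares. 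Finally, as the paper already flags in Remark \ref{bad_bound}, this bound is numerically inferior to the one in Proposition \ref{Bound1} — the reason being that the crude estimate $\|G\|_{L^\infty}$ over all of $\RM$ is far from sharp, whereas Proposition \ref{Bound1} extracts the genuine negative eigenvalue $\lambda_-(\widetilde{\mathcal{L}})$ of the associated Sturm–Liouville operator — so this proposition is included only for completeness and no effort is made to optimize the constants.
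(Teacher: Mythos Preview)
Your proposal is correct and follows essentially the same energy-estimate approach as the paper: pair $\mathcal{L}v=\lambda v$ with $v$, use $F<0$ to drop the gradient term, bound the potential term by $\sup_x|G(x)|$, and use $\omega_0<0$ together with $0\le\langle(1-\partial_x^2)^{-1}v,v\rangle\le\|v\|_{L^2}^2$ for the nonlocal term. The only cosmetic difference is that the paper bounds the nonlocal term via Cauchy--Schwarz and Young's convolution inequality $\|p\ast v\|_{L^2}\le\|p\|_{L^1}\|v\|_{L^2}=\|v\|_{L^2}$, whereas you invoke the Fourier-symbol/spectral bound; both yield the same estimate.
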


\begin{proof}
Let $\lambda$ be an eigenvalue for $\mathcal{L}$ with eigenfunction $v\in L^2(\RM)$ and note that writing $\mathcal{L}$ as in \eqref{Ldef2}
with $\mathcal{L}$ expressed as in \eqref{LFG}, it follows that
 \eq{
 \partial_x(F\partial_x)v+Gv+2\omega_0(1-\partial_x^2)^{-1}v=\lambda v.
 }{eigin}
Multiplying \eqref{eigin} by $\overline{v}$, integrating over $\R$, using \eqref{convw} to rewrite  the last term on the RHS as a convolution, and performing integration by parts we obtain
$$
\lambda \|v\|_{L^2}^2=-\left<Fv_x,\,v_x\right>+\left<Gv,\,v\right>+2\omega_0\left<p\ast v,\,v\right>,
$$
where here $\|\,.\,\|$ and $\left<\cdot,\cdot\right>$ denote the $\LT$ norm and inner-product, respectively.
Since $F<0$ and $\omega_0<0$, we obtain the following inequality
\begin{align*}
\lambda \|v\|_{L^2}^2&\geq -\left|\left<Gv,\,v\right>\right|+2\omega_0\left|\left<p\ast v,\,v\right>\right|\\
&\geq -\sup_{x\in \R}\left(|G|\right)\|v\|_{L^2}^2+2\omega_0\|p\ast v\|_{L^2}\|v\|_{L^2}.
\end{align*} 	
Noting that Young's convolution inequality implies that
\begin{equation}\label{e:Young}
\|p\ast v\|_{L^2}\leq \|p\|_{L^1} \| v\|_{L^2}=\|v\|_{L^2},
\end{equation}
where here we used the definition of $p$ given in \eqref{convw} and computed that $\|p\|_{L^1(\mathbb{R})}=1$, we arrive at the stated inequality
\eqref{inf}.
\end{proof}

\subsection{Evans Function Computations}
\label{52}

In this section, we augment the analytical results from the previous section with a numerical investigation of the point spectrum of $\mathcal{L}$.
In particular, we aim to provide well-conditioned numerical evidence that Assumption \ref{H1} holds for all smooth solitary waves of the 
Novikov equation \eqref{e:nov2}.

To this end, note that while the operator $\LM$ as defined in \eqref{Ldef} is nonlocal, the eigenvalue problem $\LM v=\lambda v$ can be 
written as an equivalent ordinary differential equation by applying the invertible operator $1-\partial_x^2$.  Specifically, we have
\begin{equation}\label{evdelta2} 
\lambda\text{ is an eigenvalue of $\LM$}\iff (1-\partial_x^2)\left(\mathcal{L}_0-\lambda\right)v+2\omega_0 v=0\text{ has a solution $v\in\LT\setminus\{0\}$},
\end{equation}  
where $\mathcal{L}_0$ is given in \eqref{LZdef}. From here on, we refer to \eqref{evdelta2} as our {\sl{eigenvalue problem}}. 
Our goal is to perform numerical computations of the Evans function associated with the eigenvalue problem in \eqref{evdelta2}. 
For details about the Evans function $D(\lambda)$, see \cite{KP}, but for our purposes the key things to know is that 
$D(\lambda)$ is a complex analytic function away from the essential spectrum of $\mathcal{L}$ whose roots agree in both location and multiplicity
with the eigenvalues for $(1-\partial_x^2)\mathcal{L}$.  Our general strategy is to first use the lower bound 
provided by Proposition \ref{Bound1} to determine a region of the real axis that necessarily contains
any negative eigenvalue of $\mathcal{L}$.  We then enclose this region in the complex plane by a simple closed contour $\Gamma$ and perform a numerical
winding number computation of the Evans function around this contour, being sure to use Lemma \ref{ess} to avoid the contour intersecting the essential spectrum of $\mathcal{L}$.

In the following, we briefly review the construction of the Evans function as well as our numerical methods.

\subsubsection{Computation of Smooth Solitary Wave Solutions}

\begin{figure}[t!]
\begin{center}
	\includegraphics[width=0.7\textwidth]{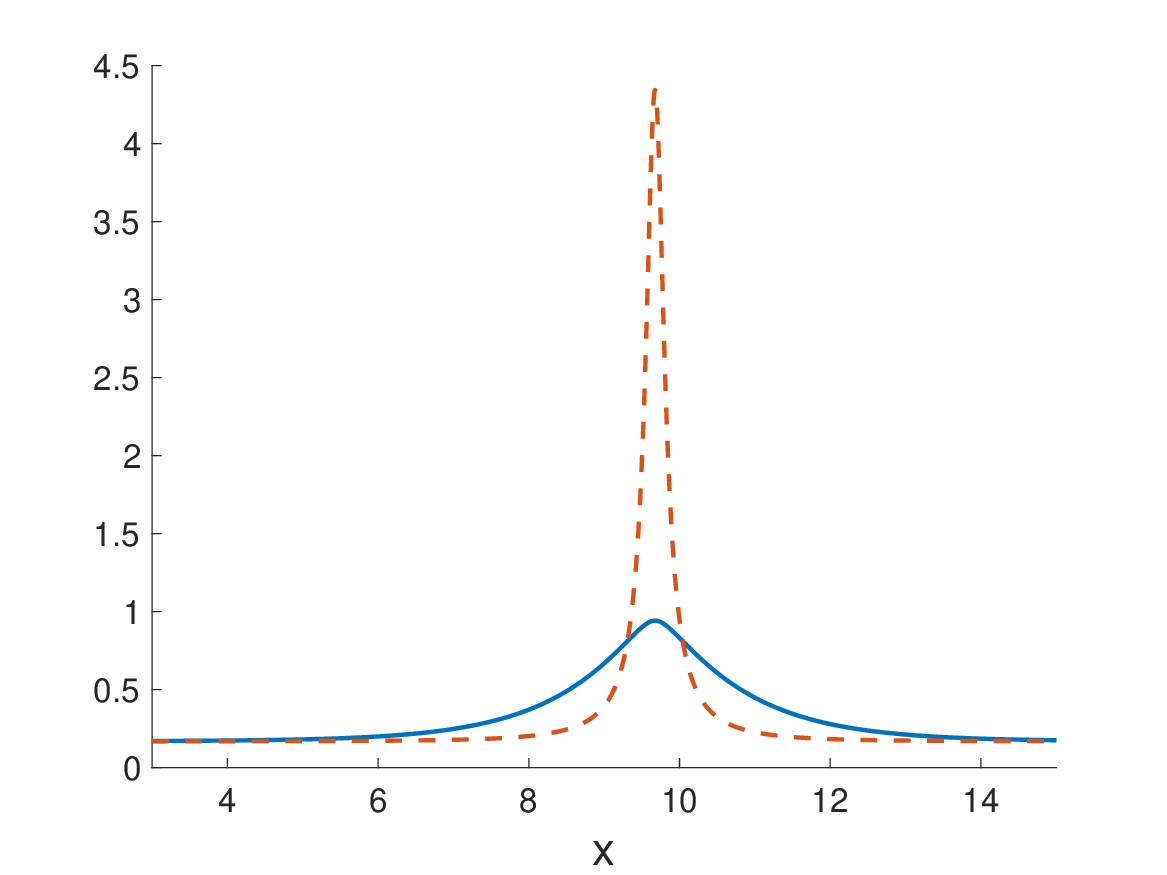}
	\caption{The solid curve shows the plot of $\phi$ as a function of $x$ obtained though our shooting method for $c=1$ and $a=3\sqrt{3}/32$. The dashed line shows the graph of $\mu$ obtained from $\phi$ through the profile equation \eqref{e:profile1} as $\mu=a/(c-\phi)^{3/2}$. \label{Shootingfigure}}
	\end{center}
\end{figure}

Throughout, we fix $c>0$.  To numerically compute smooth solitary wave profile $\phi$, we start by writing the profile equation \eqref{e:profile_quad}
satisfied by $\phi$ as the two-dimensional dynamical system
\[
\left\{\begin{aligned}
u_{1}'&=u_2\\
u_{2}'&=u_1- \frac{a}{(c-u_1^2)^{3/2}},
\end{aligned}\right.
\]
where here we recall that the integration constant $a$ satisfies $0<a<\frac{3\sqrt{3}c^2}{16}$.  For each such $a$, we aim to compute
the homoclinic orbit that connects the fixed point $(0,k)$ to itself, where, for given values of $a$ and $c$, the asymptotic end-state $k$ is determined by the smallest
of the two solutions of \eqref{kcond}(i) on the interval $(0,c)$.  One can easily check that the (linear) unstable direction at the point $(0,k)$ is given by
$(1,C(k))$, where here $C(k)$ is the corresponding positive eigenvalue of the linearization given by $C(k)=\sqrt{(c-4k^2)/(c-k^2)}$.
Using the MatLab program {\sl{ODE45}},  homoclinic orbits for a given value of $0<a<\frac{3\sqrt{3}c^2}{16}$ are now obtained by using 
simple shooting method from the point $(0,k)$.  The result of our computation for the case $c=1$ and $a=\frac{3\sqrt{3}c^2}{32}$ is given in Figure \ref{Shootingfigure}.

%

%

\subsubsection{Construction of the Evans Function}

%
%

We now briefly recall the analytical construction of the Evans function.  To begin, we first rewrite the eigenvalue problem \eqref{evdelta2} as a
four-dimensional, first-order linear system of the form
%
%
\begin{equation}\label{linear}
\frac{d \vec{U}}{dx}  = A(x, \lambda)\vec{U}, 
\end{equation}
where $A$ is the $4\times 4$ matrix given by 
\eqnn{ A(x, \lambda) = 
\begin{pmatrix}
 0&1  &0&0\\
  0&0  &1&0\\
  0&0  &0&1\\
\left(G-G''+2\omega_0-\lambda\right)/F&\left(F'-2G'-F''\right)/F&\left(F-G-3F''+\lambda\right)/F&-3F'/F
\end{pmatrix},
}
where the functions $F$ and $G$ are given explicitly in \eqref{LFG}.  As are seeking solutions to our eigenvalue
problem \eqref{evdelta2} that are in $L^2(\RM)$, we must ensure that our solutions to \eqref{linear} tend to zero
as $x\to\pm\infty$.  With this in mind, we note that the asymptotic behavior as $x\rightarrow\pm\infty$ of  solutions to (\ref{linear}) 
is determined by the asymptotic matrix
\begin{equation}\label{Am2}
A^{\infty}(\lambda):=\lim_{x\rightarrow \pm\infty}A(x,\lambda)
=
\left(\begin{array}{cccc}
 0&1  &0&0\\
  0&0  &1&0\\
  0&0  &0&1\\
\frac{1}{2}\left(\lambda k^{8/3}-4+\frac{24}{c-k^2}\right)&0&\frac{1}{2}\left(10-\lambda k^{8/3}-\frac{6k^2}{c-k^2}\right)&0
\end{array}\right)
\end{equation}
which is found by using the explicit expressions for the functions $F$ and $G$
in \eqref{FGdef} and  inserting the values $\mu=k$ and $\partial_x^\ell\mu=0$ for $\ell\geq 1$ in $A$.
For $\lambda$ not in the essential spectrum $\sigma_{\rm ess}(\mathcal{L})$ of the operator $\mathcal{L}$, as determined in Lemma \ref{ess},
the asymptotic matrix $A^\infty(\lambda)$ will have two eigenvalues with positive real part, and two with negative real part.  Indeed, 
the essential spectrum is precisely the values $\lambda\in\CM$ for which $A^\infty(\lambda)$ has an eigenvalue on the imaginary axis.  
It follows that the number of eigenvalues of $A^\infty(\lambda)$ with positive and negative  real parts must be constant on 
the connected set $\mathbb{C}\setminus\sigma_{\rm ess}(\mathcal{L})$.  Further, noting that Lemma \ref{ess} implies that $0\notin\sigma_{\rm ess}(\mathcal{L})$,
we can directly calculate that the eigenvalues of the matrix $A^\infty(0)$ are given explicitly by
\begin{equation}\label{eigplus}
\mu_{1}^\pm(0)=\pm 2,\quad  \mu_{2}^\pm(0)=\pm\sqrt{\frac{c-4k^2}{c-k^2}}
\end{equation}
and hence, by the above discussion, it follows that $A^\infty(\lambda)$ must have two eigenvalues $\mu_i^+(\lambda)$ with positive real real parts and two 
eigenvalues $\mu_i^-(\lambda)$ with negative real parts for all $\lambda\notin\sigma_{\rm ess}(\mathcal{L})$, as claimed.
Finally, for $\lambda\notin\sigma_{\rm ess}(\mathcal{L})$ we let $v_i^+$ denote the eigenvectors with unit norm of $A^\infty(\lambda)$ corresponding to
the eigenvalues $\mu_i^+$, and similarly $v_i^-$ will denote eigenvectors with unit norm of $A^\infty(\lambda)$ corresponding to the eigenvalues $\mu_i^-$.

Now, by asymptotic ODE theory it follows for $\lambda\notin\sigma_{\rm ess}(\mathcal{L})$ that the nonlinear system \eqref{linear} will
have two linearly independent solutions $U_1^+$ and $U_2^+$ converging to zero as $x\to\infty$ and two linearly independent
solutions $U_1^-$ and $U_2^-$ converging to zero as $x\to-\infty$, which satisfy
\[
\lim_{x\to-\infty}U_i^+e^{-\mu_i^+ x}=v_i^+~~{\rm and}~~\lim_{x\to\infty}U_i^-e^{-\mu_i^- x}=v_i^-.
\]
It follows that a given $\lambda_0\notin\sigma_{\rm ess}(\mathcal{L})$ is an eigenvalue for \eqref{evdelta2} if and only if the 
space of solutions tending to zero as $x\to-\infty$, spanned by $U_1^+$ and $U_2^+$, and the space of solutions tending two zero
as $x\to\infty$, spanned by $U_1^-$ and $U_2^-$, have a nontrivial intersection of strictly positive dimension when $\lambda=\lambda_0$.
It follows that the eigenvalues of $\mathcal{L}$ in $\CM\setminus\sigma_{\rm ess}(\mathcal{L})$ are precisely the roots of the function\footnote{Note the vanishing 
or non-vanishing of the given determinant at a given $\lambda\notin\sigma_{\rm ess}(\mathcal{L})$
is independent of $x$, and hence the right-hand-side can be calculated at any convenient $x\in\RM$ chosen (typically, one takes $x=0$).}
\eqnn{
D(\lambda)=\det\left(U_1^+(\lambda),U_2^+(\lambda),U_1^-(\lambda),U_2^-(\lambda)\right)\big{|}_{x=0},
}
which is known as the Evans function\cite{AGJ,Evans, Gardner98,Jones, Kapitula98a,Kapitula00, Li00, Pego, Sandstede,Yanagida}.  

\begin{remark}
We note that the two solutions  $U_{1}^-$ and $U_{2}^-$ may be numerically obtained by integrating (\ref{linear}) backwards from a sufficiently large positive value of $x$, with initial 
conditions in the $v_{1}^-$ and $v_{2}^-$ directions, respectively.  We note, however, that even though the two eigendirections $v_1^-$ and $v_2^-$ 
are linearly independent, the numerical integration
will lead to an alignment with the eigendirection corresponding to the
eigenvalue with smallest real part. One way to circumvent this problem is to compute  the Evans function using the alternative 
definition involving exterior algebra
\cite{AfBr01, AlBr02, Br99, BrDeGo02, Br00, DeGo05, NgR79, skms, EvansWeberGroup}.  This is the approach we take here.
\end{remark}

In addition to the above, it can be shown that  the zeroes of $D$ on $\CM\setminus\sigma_{\rm ess}(\mathcal{L})$ agree in location and algebraic
multiplicity with to the eigenvalues of \eqref{evdelta2}.  Further, the Evans function $D(\lambda)$ above is complex-analytic on $\CM\setminus\sigma_{\rm ess}(\mathcal{L})$
and is real-valued if $\lambda$ is real.  See, for example, the references cited above of \cite{KP}.  In what follows, we discuss results
coming from the numerical evaluation of the Evans function defined above.

\subsubsection{Numerical Evans Function Calculations}

In this section, we discuss our numerical investigation of the Evans function $D(\lambda)$.
Throughout our work, we perform the numerical Evans function computations using the MATLAB-based numerical library for Evans function computation called STABLAB \cite{StabLab}. 
 In STABLAB, the Evans function is computed using the {\sl{polar-coordinate method}}, a method initially proposed by Humpherys and Zumbrun in \cite{Humpherys06}, which represents the unstable and stable manifolds using the continuous orthogonalization method of Drury \cite{Drury80} together with a scalar ODE that restores analyticity.   Further,
we note the scaling
\[
u\rightarrow \sqrt{c}u,\;\;t\rightarrow ct,
\]
applied to the Novikov equation \eqref{e:nov2} written in traveling wave variables sets the wave speed parameter $c>0$ to $c=1$.   Thus, throughout
our work we will take $c=1$.

Now, note that to verify Assumption \ref{H1} for a given smooth solitary wave $\mu$ of the Novikov equation\footnote{In our presentation, we first discuss the methodology
for given, arbitrary wave $\mu$.  We will then discuss the application of the methodology to a selection of smooth solitary wave solutions $\mu$.}, 
we must determine the number of negative eigenvalues
of the associated $\mathcal{L}$ and also determine the dimension of the kernel of $\mathcal{L}$.  To this end, For a given wave $\mu$ we first aim to use Proposition \ref{Bound1}
to determine a lower bound for the point spectrum of $\mathcal{L}$.  Of course, since the bound in \eqref{Boundexp} includes the unique negative eigenvalue
$\lambda_-(\widetilde{\mathcal{L}})$ for the Sturm-Liouville operator $\widetilde{\mathcal{L}}$, we first need to numerically study the negative spectrum of $\widetilde{\mathcal{L}}$.
To this end, we note that an Evans function for $\widetilde{\mathcal{L}}$ can be defined as in the previous section, albeit this time the determinants are only two-by-two 
rather than four-by-four.  Call this Evans function $\widetilde{D}$ and note, since $\lambda=0$ is an eigenvalue for $\widetilde{\mathcal{L}}$ that we must 
have $\widetilde{D}(0)=0$.  To get a lower-bound for $\lambda_-(\widetilde{\mathcal{L}})$, we define a rectangular contour $B\in\CM$
that encloses $\lambda=0$ and part of the negative real axis and compute the winding number
\[
\frac{1}{2\pi i}\oint_B\frac{\partial_\lambda\widetilde{D}(\lambda)}{\widetilde{D}(\lambda)}d\lambda
\]
along the contour $B$: see Figure \ref{F:contours}(a)  By fixing the right side of $B$ and incrementally moving the left side of $B$ farther along the negative real axis, we eventually
find that the above winding number jumps from $1$, indicating the only eigenvalue for $\widetilde{\mathcal{L}}$ inside $B$ is at $\lambda=0$,
to $2$, indicating that now $B$ encloses the unique negative eigenvalue $\lambda_-(\widetilde{\mathcal{L}})$.  This gives us a lower-bound 
for the value $\lambda_-(\widetilde{\mathcal{L}})$, which we can then use in the expression \eqref{Boundexp} to give us a lower-bound $\sigma_1$
for the lower-bound on the point spectrum of $\mathcal{L}$.

\begin{figure}[t!]
\begin{center}
(a)\includegraphics[scale=0.6]{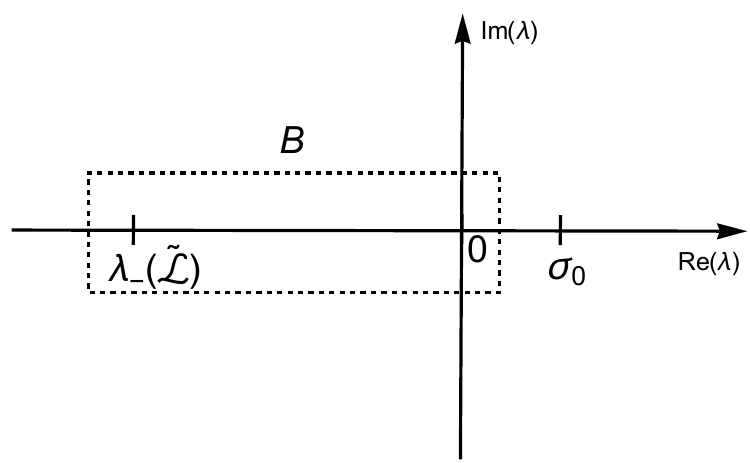}~~~(b)\includegraphics[scale=0.6]{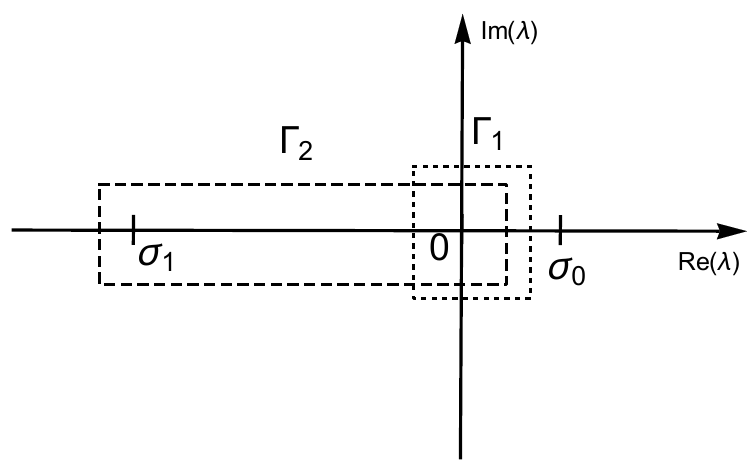}
\caption{Contours for our various numerical computations.  (a) A depiction of the contour $B$ used to numerically find a lower bound for $\lambda_-(\widetilde{\mathcal{L}})$.
(b) A depiction of the contours $\Gamma_1$ and $\Gamma_2$ used to verify the simplicity of the eigenvalue at $\lambda=0$ for $\mathcal{L}$
as well as that $\mathcal{L}$ has only one negative eigenvalue.}\label{F:contours}
\end{center}
\end{figure}

\begin{remark}\label{bad_bound}
Let us add that we have also considered the bound provided by Proposition \ref{EEs}. For each wave considered in our study, we found the value of the bound provided by Proposition   \ref{EEs} to be worse (and in most cases, more than one order of magnitude more negative) than the bound provided by Proposition \ref{Bound1}.
For example, for the wave associated to $c=1$ and $a=3\sqrt{3}/32$ we find that the lower bound given by Proposition \ref{Bound1} is $\sigma_1=-68.266$, 
while that numerically computed  from the formula in Proposition \ref{EEs} is equal to $-947.25$.  As such, throughout our work we used the bound provided by 
Proposition \ref{Bound1} in our numerical winding number calculations.
\end{remark}

Continuing, with the lower-bound $\sigma_1$ determined for a given wave $\mu$ we now turn our attention to the full operator $\mathcal{L}$ associated to $\mu$.  
To this end, we first  draw a small 
rectangular contour $\Gamma_1$ around the origin in the complex plane (see Figure \ref{F:contours}(b)), being sure to avoid intersecting with the essential spectrum,
and numerically confirming that
\[
\frac{1}{2\pi i}\oint_{\Gamma_1}\frac{\partial_\lambda D(\lambda)}{D(\lambda)}d\lambda=1,
\]
indicating that $\lambda=0$ is indeed a simple eigenvalue of $\mathcal{L}$.  We then select a larger contour $\Gamma_2$ that encloses the interval $(\sigma_1,0)$
containing all possible negative eigenvalues of $\mathcal{L}$.  Additionally, to ensure $\Gamma_2$ does not enclose any possible
positive eigenvalue of $\mathcal{L}$, we ensure that the real-positive subset of the interior of $\Gamma_2$ is also enclosed by $\Gamma_1$ above:
see Figure \ref{F:contours}(b) for a depiction.
We may then aim to numerically confirm that
\[
\frac{1}{2\pi i}\oint_{\Gamma_2}\frac{\partial_\lambda D(\lambda)}{D(\lambda)}d\lambda=2,
\]
indicating that $\mathcal{L}$ has precisely two eigenvalues (counting multiplicity) inside $\Gamma_2$.  Since $\lambda=0$ is enclosed by
$\Gamma_2$, it would then follow that $\mathcal{L}$ has precisely one negative eigenvalue, as desired.  Assuming the above holds for a given smooth
solitary wave $\mu$ this will provide strong numerical evidence that Assumption \ref{H1} holds for the wave $\mu$.

It remains to implement the above numerical procedure.  To this end, recall that we have scaled to $c=1$ and hence Theorem \ref{exth} implies that
smooth solitary waves will exist for parameter values $a\in(0,a_L)$ with $a_L:=\frac{3\sqrt{3}}{16}$.  In our calculations, we ran the above numerical computation
on the thirteen values\footnote{Note that for $j=1,2$ the wave is very nearly peaked, introducing considerable additional difficulty in our numerical computations.  It
would be interesting to numericaly investigate the small $a$ regime further.  At the very least, we note that as the limiting peakon is nonlinearly stable in $H^1$ it 
seems reasonable to expect that Assumption \ref{H1} continues to hold for nearly peaked waves.}
\[
a_j = j\left(\frac{a_L}{16}\right),~~j=3,4,5,,\ldots,15.
\]
Our numerical computations show that Assumption \ref{H1} hold for each of the associated thirteen smooth solitary wave solutions generated.  This provides strong
numerical evidence for the following.

\

\noindent
{\bf Numerical Observation:} Assumption \ref{H1} holds for all smooth solitary waves of the Novikov equation constructed in Theorem \ref{exth}.

\

We note that it is an interesting open problem to analytically verify Assumption \ref{H1} for the Novikov equation.

\begin{remark}\label{R:pos}
Note that from the decomposition of the operator $\mathcal{L}$ given in \eqref{OSum}, one may hope to analytically establish Assumption \ref{H1} noting
that the Sturm-Liouville operator $\widetilde{\mathcal{L}}$ has only one negative eigenvalue and by showing that the operator $\mathcal{S}$ is non-negative.
According to our numerical investigation\footnote{Observe that \eqref{Sspec} provides a bounded region containing $\sigma(\mathcal{S})$ and hence one can use
numerical Evan's function techniques as above to search for possible negative spectrum of $(1-\partial_x^2)\mathcal{S}$.}, however, this seems not to be the 
case as the operator $\mathcal{S}$ seems to have negative spectrum.
\end{remark}

\section{Orbital Stability Analysis}\label{S:Orbital}

Throughout this section, fix $c>0$ and let $\mu(\cdot;k)$ be a smooth solitary wave for the the Novikov equation as constructed in Theorem \ref{exth}.  Further, we will
assume throughout that the spectral Assumption \ref{H1} holds for the wave $\mu(\cdot;k)$ considered.  Under this assumption, the first goal of this section
is to derive an analytic criteria that guarantees the nonlinear orbital stability of the solitary wave $\mu$ to perturbations in $H^1(\RM)$.  Once this condition
is established, we will then provide a numerical verification of this stability criteria, demonstrating that it appears to hold for all smooth solitary
waves of the Novikov equation.    

\subsection{Coercivity of $\Lambda$}

To begin note that, note that if  Assumption \ref{H1} holds then Corollary \ref{C:expand} implies that the solitary wave $\mu$ is a 
degenerate saddle point of the action functional $\Lambda$.    Further, treating $\omega_0$ and $\omega_1$ 
as Lagrange multipliers in the definition of $\Lambda$ in \eqref{e:Lyapunov}, our solitary waves can be considered as \emph{constrained} critical points of $F_2$ subject to fixed
$\mathcal{E}$ and $F_1$.  Thus, it may still be possible establish the stability of $\mu$ provided we can show it is a constrained local minimizer of $F_2$ or, equivalently,
of $\Lambda$.  To understand precisely the appropriate constraint, we note that, formally,
\[
\frac{\delta \mathcal E}{\delta m}(\mu)=2(1-\partial_x^2)^{-1}\mu~~{\rm and}~~\frac{\delta F_1}{\delta m}(\mu)=\frac{2}{3}\mu^{-1/3},
\]
although neither of these variations individually represents an integrable function.  However, we see that the functional
\begin{equation}\label{e:F}
\mathcal{F}(m):=\mathcal{E}(m)-3k^{4/3}F_1(m)
\end{equation}
is well-defined on $X_k$ and that its first variation
\begin{equation}\label{e:F'}
\frac{\delta\mathcal{F}}{\delta m}(\mu)=2(1-\partial_x^2)^{-1}\mu-2k^{4/3}\mu^{-1/3}
\end{equation}
is an integrable function.  As $\mathcal{F}$ is built out of conserved quantities, it follows that the evolution of \eqref{e:nov2} does not occur on 
all of $X_k$ but rather on the co-dimension one submanifold
\[
\mathcal{M}_k:=\left\{m\in X_k:\mathcal{F}(m)=\mathcal{F}(\mu(\cdot;k))\right\}.
\]
Note, in particular, that the entire group orbit
\[
\mathcal{O}_k:=\left\{\mu(\cdot-x_0;k):x_0\in\RM\right\}
\]
is contained in the manifold $\mathcal{M}_k$.  The goal of this section is to demonstrate coercivity of the functional $\Lambda$ in \eqref{e:Lyapunov}
on the nonlinear manifold $\mathcal{M}_k$ in a sufficiently small neighborhood of the group orbit $\mathcal{O}_k$.  
%
As we will see, 
by Taylor's theorem it is sufficient to establish that the bilinear form associated to the symmetric linear operator $\mathcal{L}$ 
satisfies an $H^1$ coercivity bound on the tangent space to $\mathcal{M}_k$ at $\mu$.  To this end, we define
\[
\mathcal{T}_0:=\left\{v\in H^1(\RM): \left<\frac{\delta\mathcal{F}}{\delta m}(\mu),v\right>=0\right\}
\]
and note that $\mathcal{T}_0$ is precisely the tangent space in $L^2(\RM)$ to the submanifold $\mathcal{M}_k$ at the point $\mu(\cdot;k)$.
Our next result establishes a sufficient condition for the positivity of the linear operator $\mathcal{L}$ on the elements of
the tangent space $\mathcal{T}_0$ which are orthogonal to the kernel of $\mathcal{L}$.

%
%
%

\begin{lemma}\label{P:VK}
Let $\mu(\cdot;k)$ be a smooth solitary wave solution of \eqref{e:nov2} and assume that Assumption \ref{H1} holds.  If
\begin{equation}\label{e:VK}
\left<\mathcal{L}^{-1}\frac{\delta\mathcal{F}}{\delta m}(\mu),\frac{\delta\mathcal{F}}{\delta m}(\mu)\right><0
\end{equation}
then there exists a constant $\gamma>0$ such that
\begin{equation}\label{e:L_coercive}
\left<\mathcal{L}v,v\right>\geq \gamma\|v\|_{H^1}^2
\end{equation}
for all $v\in\mathcal{T}_0$ with $v\perp \mu'$.
\end{lemma}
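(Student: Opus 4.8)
The plan is to prove the coercivity bound \eqref{e:L_coercive} by the standard constrained-minimization argument of Grillakis--Shatah--Strauss, adapted to the present nonlocal setting. Under Assumption \ref{H1}, the operator $\mathcal{L}$ on $L^2(\RM)$ has exactly one negative eigenvalue (simple, with eigenfunction $\chi$, say, normalized so $\langle \mathcal{L}\chi,\chi\rangle = -\nu^2 < 0$), a simple kernel spanned by $\mu'$, and the remainder of the spectrum bounded below by some $\delta > 0$. Since $\mathcal{L}$ is a relatively compact perturbation of a Sturm--Liouville operator plus the bounded nonlocal term $2\omega_0(1-\partial_x^2)^{-1}$, and its essential spectrum is $[\sigma_0,\infty)$ with $\sigma_0>0$ by Lemma \ref{ess}, we have $\delta = \min(\sigma_0, \lambda_{\rm gap})$ where $\lambda_{\rm gap}$ is the smallest positive eigenvalue. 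Write $\beta := \tfrac{\delta\mathcal{F}}{\delta m}(\mu)$ for the constraint functional, which is integrable by \eqref{e:F'}, and note $\langle \beta, \mu'\rangle = 0$ (this follows since $\mathcal{F}$ is invariant under translations, so differentiating $\mathcal{F}(\mu(\cdot-x_0))$ in $x_0$ at $0$ gives $\langle \beta, \mu'\rangle = 0$); hence the two constraints $v \perp \mu'$ and $v \in \mathcal{T}_0$ (i.e. $v \perp \beta$) are compatible and cut out a codimension-two subspace of $H^1$.

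The key steps, in order: (1) First reduce to showing strict positivity of the quadratic form on the constrained space, i.e. $\inf\{\langle \mathcal{L}v,v\rangle : v \in \mathcal{T}_0,\ v\perp\mu',\ \|v\|_{L^2}=1\} =: \alpha > 0$; the upgrade from an $L^2$-coercivity bound $\langle\mathcal{L}v,v\rangle \geq \alpha\|v\|_{L^2}^2$ to the $H^1$-bound \eqref{e:L_coercive} is a routine interpolation argument using that $\mathcal{L}$ controls $\|v_x\|_{L^2}^2$ from above modulo $\|v\|_{L^2}^2$ (since $F = -2\mu^{-8/3}$ is bounded away from $0$ and $-\infty$), so $\langle\mathcal{L}v,v\rangle \geq \theta(\langle\mathcal{L}v,v\rangle) + (1-\theta)(c_1\|v_x\|_{L^2}^2 - c_2\|v\|_{L^2}^2)$ for suitable $\theta\in(0,1)$, which combined with the $L^2$ bound gives the $H^1$ bound. (2) To get $\alpha>0$, use the classical index/minimax argument: since $\mathcal{L}$ has a single negative direction $\chi$, on any subspace on which $\langle\mathcal{L}v,v\rangle$ fails to be positive there must be "interaction" with $\chi$. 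The cleanest route is the lemma (see e.g. \cite[Lemma E.1]{GSS} or the presentation in \cite{KP}): if $\mathcal{L}$ has one negative eigenvalue, trivial-kernel-modulo-$\mu'$, positive essential spectrum, and there exists $\psi\in L^2$ with $\langle \mathcal{L}^{-1}\psi,\psi\rangle < 0$, then $\langle\mathcal{L}v,v\rangle > 0$ for all nonzero $v\perp\psi$, $v\perp\mu'$ — and moreover the infimum over the unit sphere in this subspace is attained and positive because the essential spectrum is positive (so no infimizing sequence can escape to the continuous spectrum). (3) Apply this with $\psi = \beta$: the hypothesis $\langle\mathcal{L}^{-1}\beta,\beta\rangle < 0$ is exactly \eqref{e:VK}, and $\mathcal{L}^{-1}\beta$ is well-defined since $\beta\perp\ker\mathcal{L}$ — to see $\beta \perp \mu'$ we already noted $\langle\beta,\mu'\rangle=0$ above. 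This yields $\langle\mathcal{L}v,v\rangle>0$ on $\mathcal{T}_0 \cap \{\mu'\}^\perp \setminus\{0\}$, and compactness of the resolvent difference plus positive essential spectrum promotes this to the uniform bound $\alpha>0$.

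I would carry out step (2) carefully via a Lagrange-multiplier / spectral decomposition: expand $v = p\,\chi + q\,\mu' + w$ with $w$ in the positive spectral subspace (here using simplicity of the negative and zero eigenvalues), so $\langle\mathcal{L}v,v\rangle = -\nu^2 p^2 + \langle\mathcal{L}w,w\rangle \geq -\nu^2 p^2 + \delta\|w\|_{L^2}^2$. The constraint $v\perp\mu'$ forces $q = -\langle v,\mu'\rangle_{\text{corr}}$ — actually it is cleaner to just impose $q=0$ after noting $\mu'$ is $\mathcal{L}$-orthogonal to everything; the constraint $v\perp\beta$ then reads $p\langle\chi,\beta\rangle + \langle w,\beta\rangle = 0$. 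If $\langle\chi,\beta\rangle = 0$ then $v=w$ lies in the positive subspace and we are done immediately; otherwise solve for $p$ in terms of $w$ and substitute, reducing to showing a certain quadratic form in $w$ is positive definite, which is where the sign of $\langle\mathcal{L}^{-1}\beta,\beta\rangle$ enters (it controls the relevant Schur complement). The main obstacle I anticipate is \emph{not} the algebra of this finite-rank reduction — that is classical — but rather making sure the infimum $\alpha$ is genuinely positive rather than merely nonnegative: one must rule out a minimizing sequence $v_n$ with $\langle\mathcal{L}v_n,v_n\rangle \to 0$, and this is precisely where Lemma \ref{ess} (essential spectrum bounded away from $0$) is essential, since it guarantees that weak limits cannot lose mass to the continuous spectrum and that the only way the form can degenerate is along the finite-dimensional span of $\chi$, $\mu'$, and the (at most one-dimensional) space of generalized eigenfunctions — all of which are excluded by the two orthogonality constraints and \eqref{e:VK}. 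A secondary technical point is verifying $\beta \in L^2(\RM)$ with enough decay that $\langle\mathcal{L}^{-1}\beta,\beta\rangle$ makes sense; this follows from \eqref{e:F'} together with the exponential convergence $\phi,\mu \to k$, since $2(1-\partial_x^2)^{-1}\mu - 2k^{4/3}\mu^{-1/3}$ decays exponentially (the constant parts cancel by the choice of coefficient $3k^{4/3}$ in \eqref{e:F}).
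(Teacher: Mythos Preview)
Your proposal is correct and follows the same overall architecture as the paper: reduce to $L^2$-positivity on $\mathcal{T}_0\cap\{\mu'\}^\perp$ via the Vakhitov--Kolokolov hypothesis, then interpolate to $H^1$ using that the leading coefficient $F=-2\mu^{-8/3}$ is bounded away from zero. Where the paper argues by the monotone resolvent function $g(\gamma)=\langle(\mathcal{L}-\gamma)^{-1}\beta,\beta\rangle$ and a contradiction (assuming $\gamma_0\le 0$ forces $g(0)\ge 0$), you use the equivalent direct spectral decomposition $v=p\,\chi+w$ and a Schur-complement computation; both are standard and interchangeable.

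One slip to fix: in the degenerate case $\langle\chi,\beta\rangle=0$ you assert ``then $v=w$ lies in the positive subspace and we are done immediately.'' This is false as stated --- if $\langle\chi,\beta\rangle=0$ the constraint $v\perp\beta$ no longer controls the $\chi$-component at all, and indeed $v=\chi$ itself would satisfy both orthogonality constraints while $\langle\mathcal{L}\chi,\chi\rangle<0$. The correct observation is that this case is \emph{vacuous} under the hypothesis: if $\langle\chi,\beta\rangle=0$ then (since also $\langle\mu',\beta\rangle=0$) $\beta$ lies entirely in the positive spectral subspace of $\mathcal{L}$, whence $\langle\mathcal{L}^{-1}\beta,\beta\rangle\ge 0$, contradicting \eqref{e:VK}. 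The paper handles this point by a different route, observing directly that $\beta=\tfrac{\delta\mathcal{F}}{\delta m}(\mu)\ge 2(\phi-k)>0$ pointwise and that the ground state $\chi$ is sign-definite, so $\langle\chi,\beta\rangle\ne 0$ unconditionally; this also gives the strict inequality $\lambda_0<\gamma_0$ that the paper uses to ensure the constrained minimizer is not itself an eigenfunction of $\mathcal{L}$.
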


\begin{remark}
Note that since $\mathcal{L}$ preserves parity, specifically, it maps even functions to even functions,  the Fredholm alternative implies that
the even function $\frac{\delta\mathcal{F}}{\delta m}(\mu)$ is in the range of  the symmetric operator $\mathcal{L}$ since ${\rm ker}(\mathcal{L})={\rm span}\{\mu'\}$ by Assumption \ref{H1}.
\end{remark}

\begin{proof}
It is by now well known that the operator $\mathcal{L}$ is non-negative on $\mathcal{T}_0$ provided that the condition \eqref{e:VK} holds: see,
for example, \cite[Lemma 5.2.3]{KP} as well as the pioneering work \cite{VK}.  For completeness however, we sketch the details here.
First, we note that  the smallest eigenvalue $\gamma_0\in\RM$ of $\mathcal{L}$ on the co-dimension two subspace $\mathcal{T}_0\cap\{\mu'\}^\perp$ can be 
determined variationally as
\begin{equation}\label{e:var1}
\gamma_0=\inf_{v\in\mathcal{T}_0,~v\perp\mu'}\frac{\left<\mathcal{L}v,v\right>}{\left<v,v\right>}
\end{equation}
while the smallest eigenvalue of $\mathcal{L}$ on $L^2(\RM)$ can be similarly determined as
\[
\lambda_0=\inf_{v\in L^2(\RM)}\frac{\left<\mathcal{L}v,v\right>}{\left<v,v\right>}.
\]
As such, we clearly have $\lambda_0\leq\gamma_0$.  Further, we know that if $\chi$ is an eigenfunction for $\lambda_0$ then we know that $\chi$ is necessarily
sign-definite.  Noting that \eqref{e:profile1} along with condition \eqref{kcond} yields the relationship
\[
\mu = \frac{a(k)}{(c-\phi^2)^{3/2}} = k\left(\frac{c-k^2}{c-\phi^2}\right)^{3/2},
\]
we further have from \eqref{e:F'} that
\[
\frac{\delta \mathcal{F}}{\delta m}(\mu) = 2\phi-2k\left(\frac{c-\phi^2}{c-k^2}\right)^{1/2}\geq 2(\phi-k)>0.
\]
It follows that $\left<\frac{\delta \mathcal{F}}{\delta m}(\mu),\chi\right>\neq 0$ and hence, by definition,  $\chi\notin\mathcal{T}_0$.
In particular, it follows that 
\begin{equation}\label{e:compare1}
\lambda_0<\gamma_0.
\end{equation}
Our next immediate goal is to show that, in fact, $\gamma_0>0$ provided that condition \eqref{e:VK} holds.

For the sake of contradiction,
assume that $\gamma_0<0$ and note that it follows by Assumption \ref{H1} and \eqref{e:compare1} that that $\gamma_0$ is necessarily not an eigenvalue of $\mathcal{L}$.
Further, by standard variational methods one can show that the infimum in \eqref{e:var1} is achieved at some $\psi\in\mathcal{T}_0$ with $\psi\perp\mu'$ that satisfies\footnote{Note
that since $\mu$ is even and $\mathcal{L}$ preserves parity, the equation below and the fact that $\frac{\delta\mathcal{F}}{\delta m}(\mu)$ is even automatically 
implies that $\psi\perp\mu'$.}
\[
\mathcal{L}\psi = \gamma_0\psi + \alpha\frac{\delta\mathcal{F}}{\delta m}(\mu)
\]
for some non-zero\footnote{Note $\alpha\neq 0$ since, otherwise, $\gamma_0$ would be an eigenvalue of $\mathcal{L}$, contradicting \eqref{e:compare1}.} constant $\alpha$. 
Since $\gamma_0$ is not an eigenvalue of $\mathcal{L}$ it follows that the above can be solved as
\[
\psi=\alpha \left(\mathcal{L}-\gamma_0\right)^{-1}\frac{\delta\mathcal{F}}{\delta m}(\mu)
\]
and that, further, the requirement $\psi\in\mathcal{T}_0$ is guaranteed by enforcing the orthogonality condition
\[
\left<\left(\mathcal{L}-\gamma_0\right)^{-1}\frac{\delta\mathcal{F}}{\delta m}(\mu),\frac{\delta\mathcal{F}}{\delta m}(\mu)\right>=0.
\]
With this motivation in mind, we define the function $g:(\lambda_0,0]\to\RM$ by
\[
g(\gamma):=\left<\left(\mathcal{L}-\gamma\right)^{-1}\frac{\delta\mathcal{F}}{\delta m}(\mu),\frac{\delta\mathcal{F}}{\delta m}(\mu)\right>
\]
and note that $g(\gamma)=0$ precisely when $\gamma$ is a negative eigenvalue of $\mathcal{L}|_{\mathcal{T}_0\cap\{\mu'\}^\perp}$.  Most notably, $\gamma_0$
must necessarily be the smallest root of $g$.  One can readily check that $g$ is a smooth function on $(\lambda_0,0)$.
Additionally, it  can  be easily shown that $g'(\gamma)>0$ for all $\gamma\in(\lambda_0,0)$ and, furthermore, that
\[
\lim_{\gamma\to\lambda_0^+}g(\gamma)=-\infty.
\]
Since we must have $g(\gamma_0)=0$, the assumption that $\gamma_0\leq 0$ implies that we must have $g(0)\geq 0$, which directly contradicts \eqref{e:VK}.  It thus
follows that $\gamma_0>0$ as claimed.

From above, it follows that the condition \eqref{e:VK} implies there exists a constant $\gamma\geq\gamma_0$ such that
\begin{equation}\label{e:bd1}
\left<\mathcal{L}v,v\right>\geq\gamma\|v\|_{L^2}^2.
\end{equation}
To upgrade this to the desired $H^1$-coercivity bound \eqref{e:L_coercive}, we use an elementary interpolation argument.  Recalling \eqref{Ldef2} and \eqref{LFG} we can write
\[
\mathcal{L}=\partial_x\left(F\partial_x\right)+G+2\omega_0\left(1-\partial_x^2\right)^{-1}
\]
where $F$ and $G$ are the smooth explicit functions given in \eqref{FGdef}.  It follows that for all $v\in\mathcal{T}_0$ with $v\perp\mu'$ we have
\begin{align*}
\left<\mathcal{L}v,v\right>&=-\int_\RM F(x)v_x^2~dx + \int_\RM G(x)v^2~dx + 2\omega_0\int_\RM v(1-\partial_x^2)^{-1}v~dx\\
&\geq \left(\inf_{x\in\RM}|F(x)|\right)\int_\RM v_x^2~dx + \left(\inf_{x\in\RM}G(x)+2\omega_0\right)\int_\RM v^2~dx,
\end{align*}
where here we have used that $F,\omega_0<0$ as well as that using \eqref{convw} and Young's convolution inequality \eqref{e:Young}.
Taken together, it follows that there exists constants $C_1>0$ and $C_2\in\RM$ such that
\begin{equation}\label{e:bd2}
\left<\mathcal{L}v,v\right>\geq C_1\int_\RM v_x^2~dx + C_2\int_\RM v^2~dx.
\end{equation}
Interpolating \eqref{e:bd1} and \eqref{e:bd2} it follows that for each $\theta\in[0,1]$ and all $v\in\mathcal{T}_0$ with $v\perp\mu'$ that
\[
\left<\mathcal{L}v,v\right>= C_1\theta\int_\RM v_x^2~dx + \left(C_2\theta+(1-\theta)\gamma\right)\int_\RM v^2~dx,
\]
which, by choosing $\theta>0$ sufficiently small that
\[
C_2\theta+(1-\theta)\gamma>0
\]
implies the $H^1$-coercivity estimate \eqref{e:L_coercive}, as desired.
\end{proof}

\begin{remark}
The condition \eqref{e:VK} guaranteeing non-non-negativity of $\mathcal{L}$ on $\mathcal{T}_0$ is oftentimes referred to as the Vakhitov-Kolokolov condition.  In 
a later section we will derive a useful analytical representation for the inner-product in \eqref{e:VK} and will provide strong numerical evidence
that the condition \eqref{e:VK} in fact holds for all smooth solitary waves $\mu(\cdot;k)$ constructed in Theorem \ref{exth}.
\end{remark}

With Lemma \ref{P:VK} in hand, we now establish that the nonlinear functional $\Lambda$ itself is coercive on the nonlinear submanifold $\mathcal{M}_k$
near the solitary wave $\mu(\cdot;k)$.  To this end, we introduce the semidistance $\rho:X_k\times X_k\to\RM$ by
\[
\rho(m_1,m_2)=\inf_{x_0\in\RM}\left\|m_1-m_2(\cdot-x_0)\right\|_{H^1}
\]
and note for a given $m\in X_k$ that $\rho(m,\mu)$ is the precisely the distance between $m$ and the group orbit $\mathcal{O}_k$ of $\mu(\cdot;k)$.  

\begin{proposition}\label{P:coercive}
Assume the hypotheses of Lemma \ref{P:VK} as well as the condition \eqref{e:VK} holds.  There exists a $\delta>0$ and a constant $C=C(\delta)>0$
such that if $m\in\mathcal{M}_k$ with $\rho(m,\mu)<\delta$ then
\begin{equation}\label{e:coercive}
\Lambda(m)-\Lambda(\mu)\geq C\rho(m,\mu)^2.
\end{equation}
\end{proposition}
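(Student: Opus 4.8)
The plan is to reduce the nonlinear coercivity bound \eqref{e:coercive} to the quadratic-form coercivity of Lemma \ref{P:VK} via a Taylor expansion of $\Lambda$, a modulation (translation-parameter) argument to extract the zero mode, and the remainder estimate from Corollary \ref{C:expand}. First I would fix $m\in\mathcal{M}_k$ with $\rho(m,\mu)<\delta$ and choose a translate: since $\rho(m,\mu)<\delta$, standard arguments (implicit function theorem applied to the function $x_0\mapsto\langle m(\cdot+x_0)-\mu,\mu'\rangle$, which has nondegenerate derivative at a minimizer because $\mu''$ pairs nontrivially against $\mu$) produce an $x_0=x_0(m)\in\RM$ such that, writing $\widetilde m:=m(\cdot+x_0)-\mu$, one has $\widetilde m\perp\mu'$ in $L^2(\RM)$ and $\|\widetilde m\|_{H^1}\leq C\rho(m,\mu)$, with in fact $\|\widetilde m\|_{H^1}$ comparable to $\rho(m,\mu)$ up to a constant (for $\delta$ small). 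Because $\Lambda$ is translation-invariant and $m\in\mathcal{M}_k$ forces $m(\cdot+x_0)\in\mathcal{M}_k$, it suffices to prove $\Lambda(\mu+\widetilde m)-\Lambda(\mu)\geq C\|\widetilde m\|_{H^1}^2$ for such $\widetilde m$.

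Next I would invoke Corollary \ref{C:expand} to write $\Lambda(\mu+\widetilde m)-\Lambda(\mu)=\langle\mathcal{L}\widetilde m,\widetilde m\rangle+\mathcal{R}(\widetilde m)$ with $\|\mathcal{R}(\widetilde m)\|_{H^1}\leq C_0\|\widetilde m\|_{H^1}^3$ (hence $|\mathcal{R}(\widetilde m)|\leq C_0\|\widetilde m\|_{H^1}^3$ as a scalar remainder). The remaining task is to apply Lemma \ref{P:VK}, which gives $\langle\mathcal{L}v,v\rangle\geq\gamma\|v\|_{H^1}^2$ for $v\in\mathcal{T}_0$ with $v\perp\mu'$; thus I must upgrade the constraint ``$m\in\mathcal{M}_k$'', i.e. $\mathcal{F}(m)=\mathcal{F}(\mu)$, to the linearized constraint $\widetilde m\in\mathcal{T}_0$, i.e. $\langle\frac{\delta\mathcal{F}}{\delta m}(\mu),\widetilde m\rangle=0$. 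Since $\mathcal{F}(\mu+\widetilde m)-\mathcal{F}(\mu)=\langle\frac{\delta\mathcal{F}}{\delta m}(\mu),\widetilde m\rangle+O(\|\widetilde m\|_{H^1}^2)$ and the left side vanishes, we only get $|\langle\frac{\delta\mathcal{F}}{\delta m}(\mu),\widetilde m\rangle|\leq C\|\widetilde m\|_{H^1}^2$, so $\widetilde m$ is not exactly in $\mathcal{T}_0$. I would handle this by splitting $\widetilde m = v + s\,\zeta$, where $\zeta$ is a fixed $H^1$ function (for instance a suitable even multiple of $\frac{\delta\mathcal{F}}{\delta m}(\mu)$, or $\mathcal{L}^{-1}\frac{\delta\mathcal{F}}{\delta m}(\mu)$ which is even and hence automatically $\perp\mu'$) chosen so that $\langle\frac{\delta\mathcal{F}}{\delta m}(\mu),\zeta\rangle=1$ and $\zeta\perp\mu'$, and $v:=\widetilde m - s\zeta$ with $s:=\langle\frac{\delta\mathcal{F}}{\delta m}(\mu),\widetilde m\rangle$. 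Then $v\in\mathcal{T}_0$, $v\perp\mu'$ (both correction term and $\widetilde m$ are $\perp\mu'$), $|s|\leq C\|\widetilde m\|_{H^1}^2$, and $\|v\|_{H^1}\geq\|\widetilde m\|_{H^1}-|s|\|\zeta\|_{H^1}\geq\tfrac12\|\widetilde m\|_{H^1}$ for $\delta$ small.

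Finally I would assemble the estimate. Expanding $\langle\mathcal{L}\widetilde m,\widetilde m\rangle=\langle\mathcal{L}v,v\rangle+2s\langle\mathcal{L}v,\zeta\rangle+s^2\langle\mathcal{L}\zeta,\zeta\rangle$ and using boundedness of $\mathcal{L}$ from $H^2$ (or, after absorbing the nonlocal part, from $H^1$ against $H^1$) together with $\|v\|_{H^1},\|\widetilde m\|_{H^1}\leq C\rho(m,\mu)$ and $|s|\leq C\|\widetilde m\|_{H^1}^2$, the cross terms are $O(\|\widetilde m\|_{H^1}^3)$. Hence
\[
\Lambda(\mu+\widetilde m)-\Lambda(\mu)\;\geq\;\gamma\|v\|_{H^1}^2 - C\|\widetilde m\|_{H^1}^3 \;\geq\; \tfrac{\gamma}{4}\|\widetilde m\|_{H^1}^2 - C\|\widetilde m\|_{H^1}^3 \;\geq\; \tfrac{\gamma}{8}\|\widetilde m\|_{H^1}^2
\]
for $\|\widetilde m\|_{H^1}$ small, which holds once $\delta$ is chosen small enough. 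Combining with $\|\widetilde m\|_{H^1}\geq c\,\rho(m,\mu)$ yields \eqref{e:coercive} with $C=\tfrac{\gamma c^2}{8}$. The main obstacle I anticipate is the modulation/IFT step producing the translate $x_0(m)$ with the orthogonality $\widetilde m\perp\mu'$ and the two-sided bound relating $\|\widetilde m\|_{H^1}$ and $\rho(m,\mu)$; this is standard but requires care that the minimizing $x_0$ exists, is unique near $0$, and depends suitably on $m$ — and that the nonlocal operator $(1-\partial_x^2)^{-1}$ appearing in $\mathcal{F}$ and $\mathcal{L}$ does not spoil the quadratic Taylor remainder bounds in $H^1$, which it does not since $(1-\partial_x^2)^{-1}$ is bounded on $H^1$.
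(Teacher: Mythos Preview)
Your proposal is correct and follows essentially the same approach as the paper: modulate via the Implicit Function Theorem to achieve $\widetilde m\perp\mu'$, decompose $\widetilde m$ into a piece in $\mathcal{T}_0$ plus a small multiple of a fixed direction (the paper uses specifically $\zeta=\frac{\delta\mathcal{F}}{\delta m}(\mu)$, one of your suggestions), use the nonlinear constraint $\mathcal{F}(m)=\mathcal{F}(\mu)$ to show the coefficient is $O(\|\widetilde m\|_{H^1}^2)$, apply Lemma~\ref{P:VK} to the $\mathcal{T}_0$ piece, and absorb the cross and remainder terms. One small slip: the nondegeneracy needed for the IFT modulation is $\|\mu'\|_{L^2}^2\neq 0$, not a pairing of $\mu''$ against $\mu$.
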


\begin{proof}
To start, note by the Implicit Function Theorem that for $\delta>0$ sufficiently small there exists a unique real-valued $C^1$ map $\omega$
defined on a $\delta$-neighborhood $\mathcal{U}_\delta:=\left\{m\in H^1(\RM):\rho(m,\mu)<\delta\right\}$ of the group orbit $\mathcal{O}_k$ such that
\[
\omega(\mu)=0~~{\rm and}~~\left<m\left(\cdot+\omega(m)\right),\mu'\right>=0
\]
for all $m\in\mathcal{U}_\delta$.  Since the functional $\Lambda$ is invariant under spatial translations, it is sufficient to establish \eqref{e:coercive}
with $m$ replaced by $m(\cdot+\omega(m))$.  To this end, fix $m\in \mathcal{U}_\delta$ and note that 
\begin{equation}\label{e:decomp}
m(\cdot+\omega(m))=\mu+\alpha\frac{\delta \mathcal{F}}{\delta m}(\mu)+\eta
\end{equation}
for some constant $\alpha\in\RM$ and function $\eta\in\mathcal{T}_0$.  Note, in particular, that if $m=\mu$ then $C=y=0$.  Further,
setting $v=m(\cdot+\omega(m))-\mu$ and assuming $\|v\|_{H^1}<\delta$ we have
\[
\mathcal{F}(m)=\mathcal{F}(m(\cdot+\omega(m)))=\mathcal{F}(\mu)+\left<\frac{\delta\mathcal{F}}{\delta m}(\mu),v\right>+\mathcal{O}\left(\|v\|_{H^1}^2\right).
\]
Using \eqref{e:decomp} note that
\[
\left<\frac{\delta\mathcal{F}}{\delta m}(\mu),v\right>=C\left\|\frac{\delta\mathcal{F}}{\delta m}(\mu)\right\|_{L^2}^2
\]
and hence we have $\alpha=\mathcal{O}\left(\|v\|_{H^1}^2\right)$.  

Now, using that $\mu$ is a critical point of $\Lambda$ by Proposition \ref{e:Lmult}, we have by Taylor's theorem that
\[
\Lambda(m)=\Lambda\left(m(\cdot+\omega(m))\right)=\Lambda(\mu)+\frac{1}{2}\left<\mathcal{L}v,v\right>+o\left(\|v\|_{H^1}^2\right)
\]
and hence
\[
\Lambda(m)-\Lambda(\mu)=\frac{1}{2}\left<\mathcal{L}v,v\right>+o\left(\|v\|_{H^1}^2\right)=\frac{1}{2}\left<\mathcal{L}y,y\right>+\mathcal{O}\left(\|v\|_{H^1}^2\right).
\]
Since $y\in\mathcal{T}_0$ and $y\perp \mu'$ it follows from Lemma \ref{P:VK} that
\[
\left<\mathcal{L}y,y\right>\geq \gamma\|y\|_{H^1}^2
\]
for some constant $\gamma>0$.  Noting now that
\[
\|y\|_{H^1}\geq\left|\|v\|_{H^1}-\left\|\alpha \frac{\delta\mathcal{F}}{\delta m}(\mu)\right\|_{H^1}^2\right|\geq\|v\|_{H^1}-\widetilde{C}\|v\|_{H^1}^2
\]
for some constant $\widetilde{C}>0$, where here we again used the estimate $\alpha=\mathcal{O}\left(\|v\|_{H^1}^2\right)$, it follows there exists a constant $C>0$ such that
\[
\Lambda(m)-\Lambda(\mu)\geq C\|v\|_{H^1}^2\geq C\rho(m,\mu)^2,
\]
as desired.
\end{proof}

\subsection{Orbital Stability Result}

We can now establish our main stability result.  Before doing so, we note that in the next section we will analyze the  Vakhitov-Kolokolov condition 
\eqref{e:VK} and, in particular, in Lemma \ref{L:F_pos} that a sufficient condition for \eqref{e:VK} to hold at a solitary wave $\mu(\cdot;k_0)$ is that
\begin{equation}\label{e:F_dec}
\frac{\partial}{\partial k}\mathcal{F}(\mu(\cdot;k))<0
\end{equation}
at $k=k_0$.  Further, in the next section we will show by a straightfoward numerical calculation that condition \eqref{e:F_dec} holds for all $k\in(0,\sqrt{c}/2)$, indicating
that all smooth solitary wave solutions of the Novikov equation are orbitally stable.  Equipped with Lemma \ref{L:F_pos}, we have the following.

\begin{theorem}[Main Result] \label{T:main}
Fix $c>0$ and let $\mu(\cdot;k)$ be a smooth solitary wave solution of the Novikov equation  as  constructed in Theorem \ref{exth}
for some $k\in(0,\sqrt{c}/2)$.  Additionally, assume that   Assumption \ref{H1} and the strict inequality \eqref{e:F_dec} both hold.
Given any $\eps>0$ sufficiently small there exists a constant $C=C(\eps)>0$ such that if $v\in H^1(\RM)$ with $\|v\|_{H^1}\leq\eps$ and
if $m(\cdot,t)$ is a solution of \eqref{e:nov2} for some interval of time with initial condition $m(\cdot,0)=\mu+v$, then $m(\cdot,t)$ may
be continued to a solution for all $t>0$ such that
\[
\sup_{t>0}\inf_{x_0\in\RM}\left\|m(\cdot,t)-\mu(\cdot-x_0;k)\right\|_{H^1}\leq C\|v\|_{H^1}.
\]
\end{theorem}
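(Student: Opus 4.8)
The plan is to run the classical Grillakis--Shatah--Strauss Lyapunov-functional argument \cite{GSS}, taking the conserved action $\Lambda$ of \eqref{e:Lyapunov} as the Lyapunov functional and the conserved functional $\mathcal{F}$ of \eqref{e:F} as the constraint. As preliminaries I would first note that, since $\mu(\cdot;k)\ge k>0$ and $H^1(\RM)\hookrightarrow L^\infty(\RM)$, for $\eps$ small the datum $m(\cdot,0)=\mu+v$ has strictly positive momentum density, hence lies in $X_k$; by the well-posedness theory recalled in Remark \ref{R:wellposed} the associated solution continues globally and both $\Lambda(m(\cdot,t))$ and $\mathcal{F}(m(\cdot,t))$ are independent of $t$. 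I would also record that hypothesis \eqref{e:F_dec}, together with Lemma \ref{L:F_pos} from the next section, yields the Vakhitov--Kolokolov inequality \eqref{e:VK}, so that Lemma \ref{P:VK} supplies a $\gamma>0$ with $\langle\mathcal{L}w,w\rangle\ge\gamma\|w\|_{H^1}^2$ for every $w\in\mathcal{T}_0$ with $w\perp\mu'$. It then suffices to establish the a priori bound $\rho(m(\cdot,t),\mu)\le C_0\|v\|_{H^1}$ on any time interval on which $\rho(m(\cdot,t),\mu)$ stays below a fixed $\delta_0$, with the full statement following from a routine continuity/continuation argument.

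Next I would introduce the modulation decomposition: on the tube $\{\rho(\cdot,\mu)<\delta_0\}$ the implicit function theorem produces a (continuous in $t$) translation $x_0(t)$ with $m(\cdot,t)=\mu(\cdot-x_0(t))+w(\cdot,t)$ and $w(\cdot,t)\perp\mu'(\cdot-x_0(t))$, and since $\Lambda$, $\mathcal{F}$, $\mathcal{L}$ and $\mathcal{T}_0$ are translation-covariant I may argue as if $x_0\equiv 0$. The place where the argument departs from a direct appeal to Proposition \ref{P:coercive} is that $m(\cdot,t)$ need not lie on $\mathcal{M}_k$: one only has $\mathcal{F}(m(\cdot,t))=\mathcal{F}(m(\cdot,0))=\mathcal{F}(\mu)+\langle\frac{\delta\mathcal{F}}{\delta m}(\mu),v\rangle+\mathcal{O}(\|v\|_{H^1}^2)$, so the component of $w$ transverse to $\mathcal{T}_0$ is only of size $\mathcal{O}(\|v\|_{H^1})$, not quadratic. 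Combining this with $\mathcal{F}(\mu+w)=\mathcal{F}(\mu)+\langle\frac{\delta\mathcal{F}}{\delta m}(\mu),w\rangle+\mathcal{O}(\|w\|_{H^1}^2)$ gives $\langle\frac{\delta\mathcal{F}}{\delta m}(\mu),w\rangle=\mathcal{O}(\|v\|_{H^1})+\mathcal{O}(\|w\|_{H^1}^2)$, and—using that $\frac{\delta\mathcal{F}}{\delta m}(\mu)$ is even while $\mu'$ is odd—I would split $w=a\,\frac{\delta\mathcal{F}}{\delta m}(\mu)+\tilde w$ with $\tilde w\in\mathcal{T}_0$, $\tilde w\perp\mu'$, and $|a|=\mathcal{O}(\|v\|_{H^1})+\mathcal{O}(\|w\|_{H^1}^2)$.

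The heart of the estimate then comes from conservation of $\Lambda$ together with the Taylor expansion of Corollary \ref{C:expand} at times $t$ and $0$: from $\langle\mathcal{L}v,v\rangle+\mathcal{R}(v)=\Lambda(m(\cdot,0))-\Lambda(\mu)=\Lambda(m(\cdot,t))-\Lambda(\mu)=\langle\mathcal{L}w,w\rangle+\mathcal{R}(w)$ the left side is $\mathcal{O}(\|v\|_{H^1}^2)$, while on the right I would expand $\langle\mathcal{L}w,w\rangle$ in the splitting, bound the cross term $2a\langle\mathcal{L}\tilde w,\frac{\delta\mathcal{F}}{\delta m}(\mu)\rangle$ via Young's inequality and boundedness of the quadratic form $\mathcal{L}$ on $H^1$, apply the coercivity $\langle\mathcal{L}\tilde w,\tilde w\rangle\ge\gamma\|\tilde w\|_{H^1}^2$ with $\|\tilde w\|_{H^1}\ge\|w\|_{H^1}-C|a|$, and absorb the cubic remainder $|\mathcal{R}(w)|\le C_0\|w\|_{H^1}^3$. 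This should produce $\|w(\cdot,t)\|_{H^1}^2\le C\|v\|_{H^1}^2+C\|w(\cdot,t)\|_{H^1}^3$, hence $\|w(\cdot,t)\|_{H^1}\le C_0\|v\|_{H^1}$ provided $\|w\|_{H^1}$ stays below an absolute constant. I would then close the loop with the standard bootstrap: set $T^*=\sup\{T:\rho(m(\cdot,t),\mu)<\delta_0\text{ on }[0,T]\}$, which is positive since $\rho(m(\cdot,0),\mu)\le\|v\|_{H^1}\le\eps$; on $[0,T^*)$ the bound just obtained gives $\rho(m(\cdot,t),\mu)\le\|w(\cdot,t)\|_{H^1}\le C_0\eps<\delta_0/2$, so continuity of $t\mapsto\rho(m(\cdot,t),\mu)$ forbids $T^*<\infty$, yielding the claim for all $t>0$ (and en route keeping $m$ uniformly positive, consistent with global continuation); the stated conclusion is exactly $\sup_{t>0}\rho(m(\cdot,t),\mu)\le C_0\|v\|_{H^1}$.

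The main obstacle—and essentially the only deviation from the textbook template—is the bookkeeping in the last two paragraphs: because the solution does not sit exactly on $\mathcal{M}_k$, Proposition \ref{P:coercive} cannot be quoted verbatim, and one must verify that the $\mathcal{O}(\|v\|_{H^1})$ transverse mismatch, after being paired against $\frac{\delta\mathcal{F}}{\delta m}(\mu)$ and fed through the quadratic form $\mathcal{L}$, enters only at order $\|v\|_{H^1}^2$ (via Young's inequality against the coercive term $\gamma\|w\|_{H^1}^2$), so that the final bound remains linear in $\|v\|_{H^1}$. A secondary technical point is reconciling the $H^1$ perturbation class in the statement with the higher Sobolev regularity in which well-posedness and the conservation laws are actually available (Remark \ref{R:wellposed}); this is handled in the customary way, by first proving the a priori estimate for solutions in the well-posedness class and then using continuity of the conserved functionals on $H^1$ together with the conditional phrasing of the theorem.
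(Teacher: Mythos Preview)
Your proposal is correct and follows the standard GSS template, but it takes a genuinely different route from the paper's own proof. The paper splits into two cases according to whether the initial datum $\mu+v$ lies on the constraint manifold $\mathcal{M}_{k_0}$. When it does, Proposition~\ref{P:coercive} applies verbatim. When it does not, the paper \emph{modulates the wave parameter}: using \eqref{e:F_dec} as a local diffeomorphism condition on $k\mapsto\mathcal{F}(\mu(\cdot;k))$, it selects a nearby wave $\widetilde{\mu}=\mu(\cdot;k_0+\Delta k)$ with $|\Delta k|=\mathcal{O}(\eps)$ so that $\mathcal{F}(\widetilde{\mu})=\mathcal{F}(\mu+v)$, and then applies Proposition~\ref{P:coercive} on the exact level set $\mathcal{M}_{k_0+\Delta k}$ with the shifted action $\widetilde{\Lambda}$; a triangle inequality at the end returns control on $\rho(m,\mu)$. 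You, by contrast, keep the base point fixed at $\mu(\cdot;k_0)$ and absorb the off-manifold component directly: the transverse coefficient $a$ in your splitting $w=a\,\frac{\delta\mathcal{F}}{\delta m}(\mu)+\tilde w$ is only $\mathcal{O}(\|v\|_{H^1})$, but after Young's inequality its contribution to the quadratic form enters at order $\|v\|_{H^1}^2$, which is exactly what is needed to close the bootstrap.

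Both arguments are standard in the GSS literature. The paper's version has the advantage that once the base point is shifted, Proposition~\ref{P:coercive} can be quoted without any further bookkeeping; it also uses the hypothesis \eqref{e:F_dec} twice---once (via Lemma~\ref{L:F_pos}) to obtain \eqref{e:VK}, and once directly as a nondegeneracy condition for the parameter map. Your version is more self-contained, avoids introducing a second wave $\widetilde{\mu}$ and a second functional $\widetilde{\Lambda}$, and formally uses \eqref{e:F_dec} only through the VK condition; the price is the explicit cross-term estimate, which you have handled correctly (the bilinear form $\langle\mathcal{L}\cdot,\cdot\rangle$ is indeed bounded on $H^1\times H^1$ after one integration by parts in \eqref{LFG}).
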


\begin{proof}
Fix $k_0\in(0,\sqrt{c}/2)$ and let $\mu_0=\mu(\cdot;k_0)$ be the smooth solitary wave solution with asymptotic value $k_0$. 
Let $\delta>0$ be such that Proposition \ref{P:coercive} holds, and let $v\in H^1(\RM)$ satisfy $\rho(\mu_0+v,\mu_0)\leq\eps$ for some $0<\eps<\delta$ sufficiently
small.  By replacing $v$ by an appropriate spatial translate if necessary, we may assume that $\|v\|_{H^1}\leq \eps$.  Since $\mu$
is a critical point of $\Lambda$, Taylor's theorem implies that $\Lambda(\mu_0+v)-\Lambda(\mu)\leq C\eps^2$ for some constant $C>0$.  
Further, since $\mu_0+v\in\mathcal{M}_k$, the unique solution of \eqref{e:nov2} with initial condition $m(\cdot,0)=\mu_0+v$ must lie in
$\mathcal{M}_{k_0}$ for as long as the solution exists.  Noting that $\Lambda(m(\cdot,t))=\Lambda(\mu_0+v)$ for all $t$, it follows
by Proposition \ref{P:coercive} that $\rho(m(\cdot,t),\mu_0)\leq C\eps$ for all $t\geq 0$, as desired.

Now, suppose that $\mu_0+v\notin\mathcal{M}_{k_0}$.  In this case, we claim that we can vary the parameter $k$ slightly in order to effectively reduce this case 
to the previous one above.  Indeed, condition \eqref{e:F_dec} implies\footnote{Specifically, at this part of the argument we are only using 
that \eqref{e:F_dec} implies that $k_0$ is not a critical point of the map $k\mapsto\mathcal{F}(\mu(\cdot;k))$.} that the map
\[
k\mapsto\mathcal{F}(\mu(\cdot;k))
\]
is a diffeomorphism from a neighborhood of $k_0$ onto a neighborhood of $\mathcal{F}(\mu_0)$. In particular, we can find a constant $\Delta k$
with $|\Delta k|=\mathcal{O}(\eps)$ such that the function
\[
\widetilde{\mu}=\mu(\cdot;k_0+\Delta k)
\]
is a solution of \eqref{e:nov2} in $X_{k_0+\Delta k}$ and satisfies
\[
\mathcal{F}(\widetilde{\mu})=\mathcal{F}(\mu+v).
\]
Defining the augmented functional
\[
\widetilde{\Lambda}(m)=\omega_0(k_0+\Delta k)\mathcal{E}(m)+\omega_1(k_0+\Delta k)F_1(m)+F_2(m)
\]
on $X_{k_0+\Delta k}$, where $\omega_0$ and $\omega_1$ are defined as in \eqref{e:Lmult}, it follows as before that
\[
\widetilde{\Lambda}(m(\cdot;t))-\widetilde{\Lambda}(\widetilde{\mu})\geq C_1\rho\left(m(\cdot,t),\widetilde{\mu}\right)^2
\]
for some constant $C_1>0$ as long as $\rho\left(m(\cdot,t),\widetilde{\mu}\right)$ is sufficiently small.  Since $\widetilde{\mu}$ is a critical point
of the functional $\widetilde{\Lambda}$, we have
\[
C_1\rho\left(m(\cdot,t),\widetilde{\mu}\right)^2\leq\widetilde{\Lambda}(m(\cdot;t))-\widetilde{\Lambda}(\widetilde{\mu})\leq C_2\left\|m(\cdot,0)-\widetilde{\mu}\right\|_{H^1}^2
\]
for some constant $C_2>0$.  Moreover, by the triangle inequality we have
\[
\left\|m(\cdot,0)-\widetilde{\mu}\right\|_{H^1}\leq \left\|m(\cdot,0)-\mu\right\|_{H^1}+\left\|\mu-\widetilde{\mu}\right\|_{H^1}\leq C_3\eps 
\]
for some constant $C_3>0$ and hence there exists a constant $C_4>0$ such that
\[
\rho\left(m(\cdot,t),\widetilde{\mu}\right)\leq\rho\left(m(\cdot,0),\widetilde{\mu}\right)+\left\|\widetilde{\mu}-m(\cdot,t)\right\|_{H^1}\leq C_4\eps 
\]
for all $t>0$, completing the proof.
\end{proof}

In the above proof, we note that the argument for the case $\mu_0+v\notin\mathcal{M}_{k_0}$ is modeled after similar arguments in 
\cite{HG07,HJ15,J09} the context
of nonlinear stability of periodic traveling wave solutions in nonlinear Hamiltonian systems.

\subsection{Analysis of the Vakhitov-Kolokolov Condition}\label{S:VK}

In this section, we seek to derive a useful analytic representation for the condition \eqref{e:VK} in the previous section.  To this end, fix $c>0$, 
let $k\in(0,\sqrt{c}/2)$ and let $\mu(\cdot;k)$ be a smooth solitary wave solution from \eqref{exth}. 

We begin by observing that
the action of $\mathcal{L}$ on the various variations of the profile $\mu$ can be determined by differentiating the relation $\frac{\delta\Lambda}{\delta m}(\mu)=0$ 
with respect to the parameters $a$, $E$, and $c$, noting that the Lagrange multipliers depend explicitly on these parameters as in \eqref{e:Lmult}.  
This yields the relations
\begin{equation}\label{e:Lvar}
\left\{\begin{aligned}
\mathcal{L}\mu_a&=-\frac{\partial\omega_0}{\partial a}\frac{\partial \mathcal{E}}{\partial m}(\mu)-\frac{\partial \omega_1}{\partial a}\frac{\partial F_1}{\partial m}(\mu)\\
\mathcal{L}\mu_E&=-\frac{\partial \omega_1}{\partial E}\frac{\partial F_1}{\partial m}(\mu)\\
\mathcal{L}\mu_c&=-\frac{\partial \omega_1}{\partial c}\frac{\partial F_1}{\partial m}(\mu),
\end{aligned}\right.
\end{equation}
which will will use heavily in this section.  Of course, the variations $\mu_a$, $\mu_E$ and $\mu_c$ themselves are likely not integrable and, in fact, understanding their asymptotic
values as $x\to\pm\infty$ would require knowledge of how the asymptotic value $k$ of the profile $\mu$ varies with respect to these parameters.  Using an appropriate
scaling property, however, we identify an appropriate linear combination of these variations which is both integrable and has an easily identifiable limit at spatial infinity.  This is the
result of the following result.

\begin{lemma}\label{L:rescale}
Let $\phi$ be a bounded, smooth traveling wave solution of \eqref{e:profile_quad}, and assume that $\phi$ is smooth with respect
to the parameters $a$, $E$, and $c$.  
Then  $\mu=\phi-\phi''$ satisfies
\[
\frac{1}{2}\mu=2a\mu_a+E\mu_E+c\mu_c.
\]
\end{lemma}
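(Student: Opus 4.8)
The plan is to exploit a scaling symmetry of the quadrature equation \eqref{e:profile_quad} that rescales $\phi$, $x$, and the three parameters $(a,E,c)$ simultaneously, and then differentiate the resulting one-parameter family of solutions at the identity. Concretely, I would look for exponents $p,q,r,s,t$ so that if $\phi(x;a,E,c)$ solves \eqref{e:profile_quad}, then $\phi_\lambda(x):=\lambda^p\,\phi(\lambda^q x;\lambda^r a,\lambda^s E,\lambda^t c)$ also solves \eqref{e:profile_quad} for every $\lambda>0$. Plugging $\phi_\lambda$ into $\tfrac12(\phi')^2=E+\tfrac12\phi^2-\tfrac{a\phi}{c\sqrt{c-\phi^2}}$ and matching powers of $\lambda$ term-by-term: the term $\tfrac12\phi^2$ forces the scaling of $E$ to match $2p$; the balance $(\phi')^2\sim\phi^2$ forces $2q=0$ after accounting for the $x$-derivative, so in fact one wants $q=0$ — i.e. $x$ is \emph{not} rescaled; and the term $\tfrac{a\phi}{c\sqrt{c-\phi^2}}$ requires $c\sqrt{c-\phi^2}$ to scale like $\phi^2$, which (since $\phi^2$ and $c$ must then scale the same way) gives $t=2p$ and then $r=\tfrac32 t + p - \cdots$; working this out one finds the clean choice in which $\phi\mapsto\lambda^{1/2}\phi$ is absent and instead $\phi$ itself is unscaled while $a\mapsto 2\,(\text{weight}) \cdots$. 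The correct bookkeeping yields the weights $(a,E,c)\mapsto(\lambda^{2}a,\lambda E,\lambda c)$ with $\phi\mapsto\lambda^{1/2}\phi$ (and $x$ unscaled); this is exactly the combination appearing in the statement, with the $\tfrac12$ reflecting the $\lambda^{1/2}$ weight on $\phi$.

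Once the scaling invariance $\phi_\lambda(x)=\lambda^{1/2}\phi(x;\lambda^2 a,\lambda E,\lambda c)$ is verified, I would differentiate both sides with respect to $\lambda$ and set $\lambda=1$. The left side gives $\tfrac12\phi(x;a,E,c)$, and the chain rule on the right gives $2a\,\phi_a + E\,\phi_E + c\,\phi_c$, so
\[
\tfrac12\phi = 2a\,\phi_a + E\,\phi_E + c\,\phi_c.
\]
Applying the (parameter-independent) operator $1-\partial_x^2$ to this identity and using $\mu=\phi-\phi''=(1-\partial_x^2)\phi$ — together with the fact that $(1-\partial_x^2)$ commutes with $\partial_a,\partial_E,\partial_c$ — immediately yields $\tfrac12\mu = 2a\,\mu_a + E\,\mu_E + c\,\mu_c$, which is the claim.

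The main obstacle is purely the algebra of \emph{finding the right scaling weights}: one must check that \emph{every} term in \eqref{e:profile_quad} (and ideally also the second-order profile equation \eqref{e:profile1}, as a consistency check, since \eqref{e:profile_quad} is its first integral) transforms homogeneously under the proposed rescaling, in particular the awkward term $\tfrac{a\phi}{c\sqrt{c-\phi^2}}$ where the $\sqrt{c-\phi^2}$ couples $c$ and $\phi$. I expect that with $\phi\mapsto\lambda^{1/2}\phi$, $c\mapsto\lambda c$ one gets $c-\phi^2\mapsto\lambda(c-\phi^2)$, hence $\sqrt{c-\phi^2}\mapsto\lambda^{1/2}\sqrt{c-\phi^2}$ and $c\sqrt{c-\phi^2}\mapsto\lambda^{3/2}(\cdots)$; then $\tfrac{a\phi}{c\sqrt{c-\phi^2}}\mapsto\lambda^{r+1/2-3/2}(\cdots)=\lambda E\cdot(\cdots)$ forces $r=2$, consistent with $a\mapsto\lambda^2 a$. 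A secondary (minor) point is to note that one does not need $\phi$ to actually be the solitary wave of Theorem \ref{exth} — only that it is a smooth bounded solution of \eqref{e:profile_quad} depending smoothly on $(a,E,c)$, exactly as the lemma is stated — so no appeal to the homoclinic structure or to the relations \eqref{kcond} is required; those are reserved for later applications of the identity.
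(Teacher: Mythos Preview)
Your approach is essentially the same as the paper's: both exploit the homogeneity of \eqref{e:profile_quad} under the scaling $(\phi,a,E,c)\mapsto(\lambda^{1/2}\phi,\lambda^2 a,\lambda E,\lambda c)$ (the paper phrases this as $\phi(x;a,E,c)=c^{1/2}\psi(x;\alpha,\beta)$ with $a=c^2\alpha$, $E=c\beta$, and differentiates in $c$), then differentiate the resulting one-parameter identity at $\lambda=1$ and apply $1-\partial_x^2$. Your write-up has a small notational slip---the correct identity is $\phi(x;\lambda^2 a,\lambda E,\lambda c)=\lambda^{1/2}\phi(x;a,E,c)$, not $\phi_\lambda=\lambda^{1/2}\phi(x;\lambda^2 a,\lambda E,\lambda c)$, and your ``left side/right side'' labels are swapped---but the computation and conclusion are correct.
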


\begin{proof}
Note that solutions $\phi$ of the the profile equations \eqref{e:profile1}-\eqref{e:profile_quad} satisfy the scaling symmetry
\begin{equation}\label{e:rescale1}
\phi(x;a,E,c) = c^{1/2}\psi(x;\alpha,\beta),~~a=c^2\alpha,~~E=c\beta
\end{equation}
where $\psi$ and the constants $\alpha$ and $\beta$ are independent of $c$.  Differentiating with respect to $c$ gives
\[
\frac{\partial\phi}{\partial a}\frac{\partial a}{\partial c}+\frac{\partial\phi}{\partial E}\frac{\partial E}{\partial c}+\frac{\partial\phi}{\partial c}=\frac{1}{2c^{1/2}}\psi=\frac{1}{2c}\phi.
\]
Noting that 
\[
\frac{\partial a}{\partial c}=2c\alpha=\frac{2a}{c}~~{\rm and}~~\frac{\partial E}{\partial c}=\beta=\frac{E}{c}
\]
it follows that
\[
2a\frac{\partial\phi}{\partial a}+E\frac{\partial\phi}{\partial E}+c\frac{\partial\phi}{\partial c}=\frac{1}{2}\phi.
\]
Noting that $\mu=\phi-\phi''$, it follows that the same expression holds for $\mu$, as  claimed.
\end{proof}

\begin{remark}
There is another scaling symmetry that one could use to establish the above identity.  
Indeed, note that if $\phi$ is a solution of the profile equations \eqref{e:profile1}-\eqref{e:profile_quad} then
\[
\phi(x;a,E,c) = a^{1/4}\zeta(x;\gamma,\xi),~~E=a^{1/2}\gamma,~~c=a^{1/2}\xi
\]
where here $\zeta$ and the constants $\gamma$ and $\xi$ are independent of $a$.  Differentiating the above with respect to $a$ gives
\[
\frac{\partial\phi}{\partial a} + \frac{\partial \phi}{\partial E}\frac{\partial E}{\partial a} + \frac{\partial\phi}{\partial c}\frac{\partial c}{\partial a}=\frac{1}{4a^{3/4}}\zeta
\]
Noting that
\[
\frac{\partial E}{\partial a}=\frac{\gamma}{2a^{1/2}} = \frac{E}{2a}~~{\rm and}~~\frac{\partial c}{\partial a}=\frac{\xi}{2a^{1/2}}=\frac{c}{2a}
\]
it follows that
\[
2a\frac{\partial\phi}{\partial a} + E\frac{\partial \phi}{\partial E}+ c\frac{\partial\phi}{\partial c}=\frac{a^{1/4}}{2}\zeta=\frac{1}{2}\phi.
\]
Again, noting that $\mu=\phi-\phi''$ will satisfy precisely the same identify this leads to the result of Lemma \ref{L:rescale}.  This shows it doesn't matter which scaling symmetry you exploit, they lead to the same result.
\end{remark}

Using the identities in \eqref{e:Lvar} along with Lemma \ref{L:rescale}, it follows that 
\[
\mathcal{L}\mu=-4a\frac{\partial\omega_0}{\partial a}\frac{\partial \mathcal{E}}{\partial m}(\mu)
	-2\left(2a\frac{\partial \omega_1}{\partial a}+E\frac{\partial \omega_1}{\partial E}+c\frac{\partial \omega_1}{\partial c}\right)\frac{\partial F_1}{\partial m}(\mu).
\]
Furthermore, using now \eqref{kcond} to express the Lagrange multipliers $\omega_0$ and $\omega_1$ as explicit functions of $k$, we have by differentiating
the equation $\frac{\delta\Lambda}{\delta m}(\mu)=0$ with respect to $k$ gives
\[
\mathcal{L}\mu_k=-\frac{\partial\omega_0}{\partial k}\frac{\partial \mathcal{E}}{\partial m}(\mu)-\frac{\partial \omega_1}{\partial k}\frac{\partial F_1}{\partial m}(\mu),
\]
By construction then, we note that
\[
\mu(x)\to k~~{\rm and}~~\mu_k(x)\to 1
\]
as $x\to\pm\infty$ and hence, in particular, that the function $k\mu_k-\mu$ belongs to $H^1(\RM)$ and
\begin{align*}
\mathcal{L}\left(k\mu_k-\mu\right) &= -\left(k\frac{\partial\omega_0}{\partial k}-4a\frac{\partial\omega_0}{\partial a}\right)\frac{\partial\mathcal{E}}{\partial m}(\mu)\\
	&\qquad-\left(k\frac{\partial \omega_1}{\partial k}-4a\frac{\partial \omega_1}{\partial a}-2E\frac{\partial \omega_1}{\partial E}-2c\frac{\partial \omega_1}{\partial c}\right)
			\frac{\partial F_1}{\partial m}(\mu).
\end{align*}
Computing the various derivatives of the Lagrange multipliers, and using \eqref{kcond} to substitute explicit expressions for $a=a(k)$, $E=E(k)$ everywhere, we find that 
\begin{align*}
\mathcal{L}\left(k\mu_k-\mu\right)  &=\left(\frac{18c}{k^{2/3}(c-k^2)^2}\right)\frac{\partial \mathcal{E}}{\partial m}(\mu)
		-\left(\frac{54ck^{2/3}}{(c-k^2)^2}\right)\frac{\partial F_1}{\partial m}\\
&=\frac{18c}{k^{2/3}(c-k^2)^2}\left(\frac{\partial\mathcal{E}}{\partial m}(\mu)-3k^{4/3}\frac{\partial F_1}{\partial m}(\mu)\right)\\
&=\frac{18c}{k^{2/3}(c-k^2)^2}\frac{\delta\mathcal{F}}{\delta m}(\mu),
\end{align*}
where the last equality follows by definition from \eqref{e:F}.   Taken together, it follows that
\[
\mathcal{L}^{-1}\frac{\delta\mathcal{F}}{\delta m}(\mu)=\frac{k^{2/3}(c-k^2)^2}{18c}\left(k\mu_k-\mu\right)
\]
and hence that the condition \eqref{e:VK} can be expressed as
\[
\left<\mathcal{L}^{-1}\frac{\delta\mathcal{F}}{\delta m}(\mu),\frac{\delta\mathcal{F}}{\delta m}(\mu)\right>
  = \frac{k^{2/3}(c-k^2)^2}{18c}\int_\RM \frac{\delta\mathcal{F}}{\delta m}(\mu)\left(k\mu_k-\mu\right)dx.
\]
With this representation, we now establish the following.

\begin{lemma}
Under the hypotheses of Lemma \ref{P:VK} we have
\[
\left<\mathcal{L}^{-1}\eta_0,\eta_0\right> = \frac{k^{11/3}(c-k^2)^2}{18c}\frac{\partial}{\partial k}\left[ \frac{1}{k^2}\mathcal{F}(\mu(\cdot;k))\right].
\]
\end{lemma}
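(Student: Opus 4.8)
The plan is to reduce the statement to an elementary scalar identity. Writing $\eta_0 := \frac{\delta\mathcal{F}}{\delta m}(\mu)$ and using the formula for $\mathcal{L}^{-1}\frac{\delta\mathcal{F}}{\delta m}(\mu)$ obtained just before the statement, one has
\[
\left<\mathcal{L}^{-1}\eta_0,\eta_0\right> = \frac{k^{2/3}(c-k^2)^2}{18c}\int_\RM \frac{\delta\mathcal{F}}{\delta m}(\mu)\,(k\mu_k-\mu)\,dx .
\]
Since $\frac{k^{11/3}(c-k^2)^2}{18c} = k^{3}\cdot\frac{k^{2/3}(c-k^2)^2}{18c}$ and $k^{3}\,\frac{\partial}{\partial k}\left(\frac{1}{k^2}\mathcal{F}(\mu(\cdot;k))\right) = k\,\partial_k\mathcal{F}(\mu(\cdot;k)) - 2\,\mathcal{F}(\mu(\cdot;k))$, it suffices to prove
\[
\int_\RM \frac{\delta\mathcal{F}}{\delta m}(\mu)\,(k\mu_k-\mu)\,dx = k\,\frac{\partial}{\partial k}\mathcal{F}(\mu(\cdot;k)) - 2\,\mathcal{F}(\mu(\cdot;k)).
\]

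The main computation is then to evaluate both sides in terms of $\phi$ and $\mu=\phi-\phi''$. Using $(1-\partial_x^2)^{-1}\mu=\phi$ together with \eqref{e:F} and the definitions of $\mathcal{E}$ and $F_1$, one obtains
\[
\mathcal{F}(\mu(\cdot;k)) = \int_\RM\left(\mu\phi + 2k^2 - 3k^{4/3}\mu^{2/3}\right)dx ,
\]
an integral whose integrand decays exponentially (its constant asymptotic value $k^2+2k^2-3k^{4/3}k^{2/3}$ vanishes). Differentiating in $k$ under the integral sign — legitimate since the differentiated integrand again decays exponentially, by the exponential convergence $\phi\to k$, $\phi_k\to1$, $\phi',\phi_k'\to0$ as $|x|\to\infty$ — gives an explicit expression for $\partial_k\mathcal{F}(\mu(\cdot;k))$. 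On the left-hand side I would substitute $\frac{\delta\mathcal{F}}{\delta m}(\mu)=2\phi-2k^{4/3}\mu^{-1/3}$ from \eqref{e:F'} and expand $(2\phi-2k^{4/3}\mu^{-1/3})(k\mu_k-\mu)$. Subtracting the two resulting integrands, all algebraic terms ($\mu\phi$, $\mu^{2/3}$, $\mu^{-1/3}\mu_k$) cancel and the desired identity collapses to the single requirement
\[
\int_\RM (\mu\phi_k-\mu_k\phi)\,dx = 0 .
\]

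The last step verifies this vanishing, which is essentially the symmetry of $(1-\partial_x^2)^{-1}$ on functions with nonzero limits at infinity. Since differentiation in $k$ commutes with $\partial_x^2$, one has $\mu_k=\phi_k-\phi_k''$, hence $\mu\phi_k-\mu_k\phi = \phi\phi_k''-\phi_k\phi'' = \frac{d}{dx}(\phi\phi_k'-\phi_k\phi')$, so the integral equals the boundary contribution $\left[\phi\phi_k'-\phi_k\phi'\right]_{x=-\infty}^{x=+\infty}$, which is $0$ because $\phi\to k$ and $\phi_k\to1$ stay bounded while $\phi'\to0$ and $\phi_k'\to0$ exponentially. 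Assembling the three steps yields the claimed formula; combining it with Lemma \ref{P:VK} will then give the advertised reformulation of the Vakhitov--Kolokolov condition in terms of the sign of $\frac{\partial}{\partial k}\left[k^{-2}\mathcal{F}(\mu(\cdot;k))\right]$.

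I expect the principal obstacle to be bookkeeping rather than any conceptual difficulty: because none of $\phi$, $\mu$, $\phi_k$, $\mu_k$ decays at infinity, at each manipulation (differentiation under the integral, integration by parts) one must check that the precise combination occurring is genuinely integrable — equivalently, that $k\mu_k-\mu\in H^1(\RM)$ and that the integrand of $\mathcal{F}(\mu(\cdot;k))$ decays exponentially — and one must track the cancellations in the second step carefully so that exactly the boundary integral handled in the third step is what survives.
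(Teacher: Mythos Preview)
Your proposal is correct and follows essentially the same route as the paper: both reduce the inner product to $\int\frac{\delta\mathcal{F}}{\delta m}(\mu)(k\mu_k-\mu)\,dx$ via the preceding formula for $\mathcal{L}^{-1}\frac{\delta\mathcal{F}}{\delta m}(\mu)$, and both rewrite this as $k\,\partial_k\mathcal{F}-2\mathcal{F}=k^3\,\partial_k(k^{-2}\mathcal{F})$. The only organisational difference is that the paper invokes the variational chain rule $\partial_k\mathcal{F}(\mu)=\int\frac{\delta\mathcal{F}}{\delta m}(\mu)\,\mu_k\,dx-4k^{1/3}F_1(\mu)$ and separately evaluates $\int\frac{\delta\mathcal{F}}{\delta m}(\mu)\,\mu\,dx=2\mathcal{E}-2k^{4/3}F_1$, whereas you expand both sides directly in terms of $\phi,\mu$ and isolate the residual term $\int(\mu\phi_k-\mu_k\phi)\,dx$. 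In fact that residual is exactly what is hidden in the paper's chain-rule step (it is the statement that $(1-\partial_x^2)^{-1}$ remains symmetric on the relevant non-decaying functions), so your total-derivative argument $\mu\phi_k-\mu_k\phi=\frac{d}{dx}(\phi\phi_k'-\phi_k\phi')$ actually supplies a justification the paper leaves implicit.
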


\begin{proof}
Noting that the functional $\mathcal{F}$ depends explicitly on $k$, we have
\[
\frac{\partial}{\partial k}\left(\mathcal{F}(\mu(\cdot;k))\right)=\int_\RM\frac{\delta\mathcal{F}}{\delta m}(\mu)\mu_k~dx-4k^{1/3}F_1(\mu)
\]
and hence
\[
\int_\RM\frac{\delta\mathcal{F}}{\delta m}(\mu)\left(k\mu_k-\mu\right)dx = k\frac{\partial}{\partial k}\left(\mathcal{F}(\mu(\cdot;k))\right)+4k^{4/3}F_1(\mu)
	-\int_\RM\frac{\delta\mathcal{F}}{\delta m}(\mu)\mu~dx.
\]
Further, using the explicit formulas for $\mathcal{E}$ and $F_1$ we find that
\begin{align*}
\int_\RM\frac{\delta\mathcal{F}}{\delta m}(\mu)\mu~dx &= \int_\RM\left(\frac{\delta\mathcal{E}}{\delta m}(\mu)-3k^{4/3}\frac{\delta F_1}{\delta m}(\mu)\right)\mu~dx\\
&=\int_\RM\left(2(1-\partial_x^2)^{-1}\mu-2k^{4/3}\mu^{-1/3}\right)\mu~dx\\
&=\int_\RM\left[2\left(\mu(1-\partial_x^2)^{-1}\mu-k^2\right)-2k^{4/3}\left(\mu^{2/3}-k^{2/3}\right)\right]~dx\\
&=2\mathcal{E}(\mu)-2k^{4/3}F_1(\mu),
\end{align*}
where in the third equality we are subtracting off the asymptotic end states so the integral can be split into two pieces\footnote{Thankfully, these end state contributions cancel perfectly.}.
It thus follows that
\begin{align*}
\int_\RM\frac{\delta\mathcal{F}}{\delta m}(\mu)\left(k\mu_k-\mu\right)dx &= k\frac{\partial}{\partial k}\left(\mathcal{F}(\mu(\cdot;k))\right)
		-2\mathcal{E}(\mu)+6k^{4/3}F_1(\mu)\\
&=k\frac{\partial}{\partial k}\left(\mathcal{F}(\mu(\cdot;k))\right)-2\mathcal{F}(\mu)\\
&=k^3\frac{\partial}{\partial k}\left(k^{-2}\mathcal{F}(\mu(\cdot;k))\right),
\end{align*}
which completes the proof.
\end{proof}

To aid in calculating the above derivaitve, we have the following result.

\begin{lemma}\label{L:F_pos}
The function $\mathcal{F}(\mu(\cdot,k))$ is strictly positive for $k\in(0,\sqrt{c}/2)$.  In particular, a sufficient condition for \eqref{e:VK}
to hold is that 
\[
\frac{\partial}{\partial k}\mathcal{F}\left(\mu(\cdot,k)\right)<0.
\]
\end{lemma}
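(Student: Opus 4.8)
The plan is to handle the two assertions in turn. The ``in particular'' clause follows quickly from the preceding lemma: writing $\eta_0:=\frac{\delta\mathcal F}{\delta m}(\mu)$, that lemma expresses $\left\langle\mathcal L^{-1}\eta_0,\eta_0\right\rangle=\frac{k^{11/3}(c-k^2)^2}{18c}\,\frac{\partial}{\partial k}\big[k^{-2}\mathcal F(\mu(\cdot;k))\big]$, and since $\frac{\partial}{\partial k}\big[k^{-2}\mathcal F\big]=k^{-3}\big(k\,\partial_k\mathcal F-2\mathcal F\big)$ while the prefactor $\frac{k^{11/3}(c-k^2)^2}{18c}$ is strictly positive for $k\in(0,\sqrt c/2)$, the Vakhitov-Kolokolov inequality \eqref{e:VK} is equivalent to $k\,\partial_k\mathcal F<2\mathcal F$. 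Thus, once $\mathcal F(\mu(\cdot;k))>0$ is established, any wave with $\partial_k\mathcal F(\mu(\cdot;k))<0$ satisfies $k\,\partial_k\mathcal F<0<2\mathcal F$, so \eqref{e:VK} holds; this is exactly the stated sufficient condition.

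The substantive point is the positivity of $\mathcal F(\mu(\cdot;k))$, which I would obtain by exhibiting a pointwise nonnegative integrand. Using $u=(1-\partial_x^2)^{-1}\mu=\phi$ and integrating by parts gives $\mathcal E(\mu)=\int_{\RM}(\phi^2+\phi_x^2-k^2)\,dx$, while the profile relation \eqref{e:profile1} together with $a^{2/3}=k^{2/3}(c-k^2)$ (from \eqref{kcond}) gives $\mu^{2/3}=k^{2/3}(c-k^2)/(c-\phi^2)$, so $3k^{4/3}F_1(\mu)=3k^2\int_{\RM}\frac{\phi^2-k^2}{c-\phi^2}\,dx$. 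Subtracting,
\[
\mathcal F(\mu)=\int_{\RM}\left[\phi_x^2+(\phi^2-k^2)\,\frac{c-\phi^2-3k^2}{c-\phi^2}\right]dx .
\]
By Theorem \ref{exth} we have $k<\phi<\sqrt c$ on $\RM$ with $\phi\to k$ at $\pm\infty$, so $\phi_x^2$ and $\phi^2-k^2$ are nonnegative and $c-\phi^2>0$; hence this integrand is $\ge 0$ everywhere, and strictly positive for $x\ne 0$, as soon as $c-\phi^2-3k^2>0$ holds on all of $\RM$, i.e. as soon as the amplitude bound $\phi(0)^2+3k^2<c$ holds (recall $\phi(0)=\max_{\RM}\phi$). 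Establishing this amplitude bound is the heart of the argument.

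To get $\phi(0)^2<c-3k^2$ I would use the phase plane of \eqref{e:profile_quad}. The maximum $\phi(0)$ is the unique point of $(\phi_+,\sqrt c)$ at which the effective potential $V(\cdot;a,c)$ equals $E=V(k;a,c)$, and $V$ is strictly increasing on $(\phi_+,\sqrt c)$ with $V(\phi_+)<E$; hence $\phi(0)<\sqrt{c-3k^2}$ is equivalent to $V(\sqrt{c-3k^2};a,c)>E$ (which also forces $\sqrt{c-3k^2}\in(\phi_+,\sqrt c)$, so that the comparison is legitimate). Putting $\phi^2=c-3k^2$, hence $c-\phi^2=3k^2$, into $V$ and using \eqref{kcond} to write $a=k(c-k^2)^{3/2}$ and $E=\frac{k^2(c-2k^2)}{2c}$, this becomes, after clearing denominators and squaring (both sides being positive for $k\in(0,\sqrt c/2)$), the inequality
\[
\tfrac43(c-k^2)^3(c-3k^2)>(c^2-2ck^2-2k^4)^2 ;
\]
and one verifies the identity $\tfrac43(c-k^2)^3(c-3k^2)-(c^2-2ck^2-2k^4)^2=\tfrac{c}{3}(c-4k^2)^3$, which is strictly positive precisely because $0<k<\sqrt c/2$. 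This yields $\phi(0)^2+3k^2<c$, hence $\mathcal F(\mu(\cdot;k))>0$, completing the proof. The only part needing care is the conversion of $\mathcal F$ into the displayed local integral (the nonzero endstates must be subtracted off for convergence); the genuine content is the potential comparison delivering the amplitude bound.
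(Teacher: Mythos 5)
Your proof is correct, but the positivity part takes a genuinely different route from the paper. The paper works directly with the integrand $\mu\phi-3k^{4/3}\mu^{2/3}+2k^2$ (written via the profile relation as a function of $\phi$ alone) and shows it is pointwise positive for \emph{every} value $\phi\in(k,\sqrt c)$ by a monotonicity argument: the relevant function of $z=\phi$ vanishes at $z=k$ and has positive derivative on $[k,\sqrt c)$, so no information about the wave's amplitude is needed. You instead integrate by parts in $\mathcal{E}$ to get the local representation $\mathcal{F}(\mu)=\int_\RM\bigl[\phi_x^2+(\phi^2-k^2)\tfrac{c-\phi^2-3k^2}{c-\phi^2}\bigr]dx$, whose pointwise nonnegativity genuinely requires the amplitude bound $\phi(0)^2<c-3k^2$ (the two integrands differ by the exact derivative $-(\phi\phi_x)_x$, so pointwise positivity of one does not transfer to the other), and you then prove that bound by comparing $V(\sqrt{c-3k^2};a(k),c)$ with $E(k)$. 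I checked the key steps: the polynomial identity $\tfrac43(c-k^2)^3(c-3k^2)-(c^2-2ck^2-2k^4)^2=\tfrac{c}{3}(c-4k^2)^3$ is correct, both sides of the pre-squared inequality are indeed positive for $k^2<c/4$, and since on $(k,\sqrt c)$ the set $\{V>E\}$ is exactly $(\phi_M,\sqrt c)$ with $\phi_M=\phi(0)$, the comparison does yield $\phi(0)<\sqrt{c-3k^2}$. Your treatment of the ``in particular'' clause coincides with the paper's (positivity of the prefactor plus $\partial_k(k^{-2}\mathcal{F})=k^{-3}(k\partial_k\mathcal{F}-2\mathcal{F})$). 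What each approach buys: the paper's argument is shorter and amplitude-free; yours costs an extra potential-well computation but produces, as a by-product, the explicit quantitative bound $\max_\RM\phi<\sqrt{c-3k^2}$ on the wave amplitude and a manifestly coercive, purely local formula for $\mathcal{F}$, both of which could be useful elsewhere.
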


\begin{proof}
Recalling the definition of $\mathcal{F}$ in \eqref{e:F} we note that
\begin{align*}
\mathcal{F}(\mu(\cdot,k)) &= \int_\RM\left(\mu\phi-3k^{4/3}\mu^{2/3}+2k^2\right)dx\\
&=\int_\RM\left[k\phi\left(\frac{c-k^2}{c-\phi^2}\right)^{3/2}-3k^2\left(\frac{c-k^2}{c-\phi^2}\right)+2k^2\right]dx,
\end{align*}
where for the second equality we have used  \eqref{e:profile1} along with the condition \eqref{kcond} to determine the relationship
\[
\mu = \frac{a(k)}{(c-\phi^2)^{3/2}} = k\left(\frac{c-k^2}{c-\phi^2}\right)^{3/2}.
\]
It is sufficient to prove that the integrand above is positive which, after rearranging is equivalent to showing that
\[
\phi(x)\sqrt{\frac{c-k^2}{c-\phi(x)^2}}-3k+2k\left(\frac{1-\phi(x)^2}{1-k^2}\right)>0
\]
for all $x\in\RM$.  To this end, we define the function
\[
f(z)=x\sqrt{\frac{c-k^2}{c-z}}-3k+2k\left(\frac{1-z^2}{1-k^2}\right)
\]
and note it is sufficient to prove that $f$ is positive for $z\in[k,\sqrt{c})$.  A quick calculation shows $f(k)=0$ and, further, 
\[
f'(z) = \left(\frac{c}{c-z^2}\right)\sqrt{\frac{c-k^2}{c-z^2}}-\frac{4kz}{c-k^2},
\]
which is postiive provided that
\[
c(c-k^2)^3>16k^2z^2(c-z^2)^3=:g(z)
\]
for all $z\in[k,\sqrt{c})$.  Note that  $g$ is maximized on $[k,\sqrt{c})$ at $z=\sqrt{c}/2$, at which point
\[
g\left(\frac{\sqrt{c}}{2}\right) = \frac{27c^4}{256}.
\]
Finally, since \eqref{kcond2} implies that
\[
c(c-k^2)^3>\frac{27c^4}{64},
\]
it follows that $f'(z)>0$ for all $z\in[k,\sqrt{c})$.  Since $f(k)=0$, as already mentioned, it follows that $f(z)>0$ for all $z\in[k,\sqrt{c}))$,
as desired.  

With the positivity of $\mathcal{F}(\mu(\cdot;k))$ being established, the last claim follows by simply noting that $k$ satisfies \eqref{kcond2} and
\[
\frac{\partial}{\partial k}\left(\frac{1}{k^2}\mathcal{F}(\mu(\cdot,k))\right) = \frac{1}{k^2}\frac{\partial}{\partial k}\left(\mathcal{F}(\mu(\cdot,k))\right) -\frac{2}{k^3}\mathcal{F}(\mu(\cdot,k)),
\]
and hence \eqref{e:VK} holds provided that $\mathcal{F}(\mu(\cdot,k))$ is a strictly decreasing function of $k$, as claimed.
\end{proof}

%

\begin{figure}[t!]
\begin{center}
\includegraphics[scale=0.5]{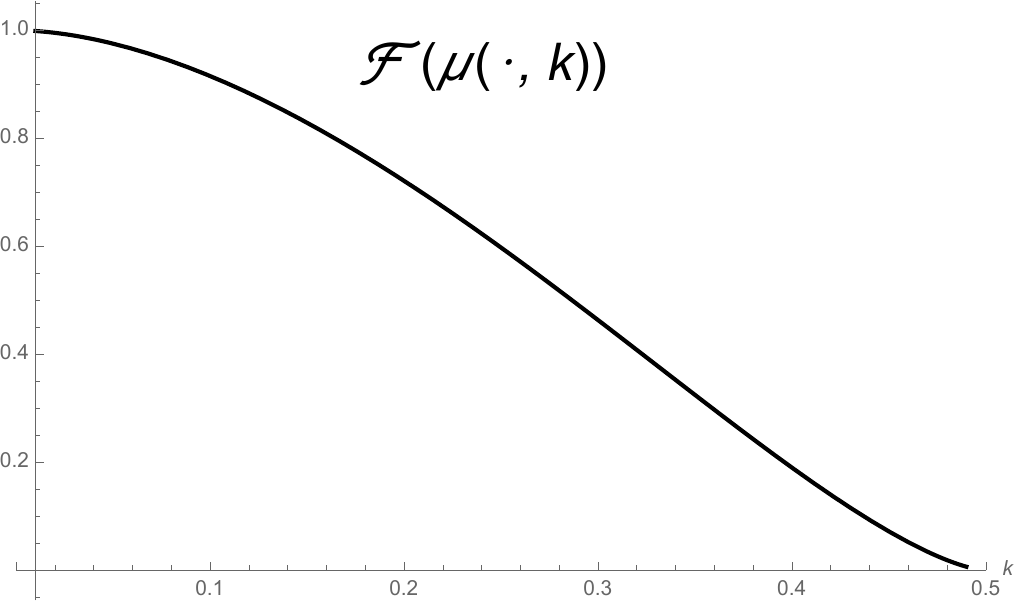}
\caption{A plot of $\mathcal{F}(\mu(\cdot;k))$ vs. $k\in(0,\sqrt{c}/2)$ for $c=1$.  Note, in particular, that it follows that \eqref{e:F_dec} holds for all $k\in(0,\sqrt{c}/2)$.}
\label{F:F_dec}
\end{center}
\end{figure}

It now remains to calculate the derivative \eqref{e:F_dec}.  The strategy here is to use the existence theory from Section \ref{S:exist}
to express the function $\mathcal{F}$ in terms of quadrature.  To this end, 
fix $c>0$ and recall that Theorem \ref{exth} provides a family of even solitary waves $\phi(x;k)$ that satisfy
\[
\frac{1}{2}\left(\frac{\partial\phi}{\partial x}\right)^2=E(k)-V(\phi;a(k),c),
\]
where here $E(k)$ and $a(k)$ are defined explicitly in \eqref{kcond}.  Further, from \eqref{e:profile1} we have the relation
\[
\mu=(1-\partial_x^2)\phi=\frac{a(k)}{(c-\phi^2)^{3/2}}
\]
which allows us to express the momentum density of our profile $\phi$ in terms of $\phi$ itself.  Recalling from Theorem \ref{exth} that
$\phi$ is even and strictly increasing on $(-\infty,0]$ it follows that
\begin{align*}
\mathcal{E}(\mu(\cdot;k))&=\int_\RM\left(\mu \phi -k^2\right)dx\\
	&=\int_\RM\left(\frac{a(k)\phi}{(c-\phi^2)^{3/2}}-k^2\right)dx\\
	&=2\int_{k}^{\phi_M(k)}\left(\frac{a(k)\phi}{(c-\phi^2)^{3/2}}-k^2\right)\frac{d\phi}{\sqrt{2\left(E(k)-V(\phi;a(k),c)\right)}}
\end{align*}
where here $\phi_M(k)$ is the global max of the profile $\phi$.  Similarly, we have
\begin{align*}
F_1(\mu(\cdot;k))&=\int_\RM\left(\mu^{2/3}-k^{2/3}\right)dx\\
	&=\int_\RM\left[\left(\frac{a(k)}{(c-\phi^2)^{3/2}}\right)^{2/3}-k^{2/3}\right]dx\\
	&=2\int_{k}^{\phi_M(k)}\left(\frac{a(k)^{2/3}}{c-\phi^2}-k^{2/3}\right)\frac{d\phi}{\sqrt{2\left(E(k)-V(\phi;a(k),c)\right)}}.
\end{align*}

Putting the above expressions together yields the representation
\begin{equation}\label{e:Fnew}
\begin{aligned}
\mathcal{F}(\mu(\cdot;k))&=\mathcal{E}(\mu(\cdot;k))-3k^{4/3}F_1(\mu(\cdot;k))\\
	&=2\int_{k}^{\phi_M(k)}\left[\frac{a(k)\phi}{(c-\phi^2)^{3/2}}-3k^{4/3}\frac{a(k)^{2/3}}{c-\phi^2}+2k^2\right]\frac{d\phi}{\sqrt{2\left(E(k)-V(\phi;a(k),c)\right)}}.
\end{aligned}
\end{equation}
By noting that $c>0$ can be rescaled to $c=1$ using \eqref{e:rescale1}, the dependence of $\mathcal{F}(\mu(\cdot;k))$ on $k\in[0,1/2)$ is depicted in Figure \ref{F:F_dec}.
Fixing $c=1$, the maximum $\phi_M(k)$  was be computed from \eqref{e:profile_quad} for each $k\in(0,1/2)$ via a standard Newton iteration, after which
the integral in \eqref{e:Fnew} was numerically computed using standard computational software\footnote{In this work, we used Mathematica.}.
From Figure \ref{F:F_dec} we see that the Vakhitov-Kolokolov condition \eqref{e:F_dec} holds for all $k\in(0,\sqrt{c}/2)$, indicating the orbital stability
of all smooth solitary wave solutions of the Novikov equation.

\bibliographystyle{unsrt}

\begin{thebibliography}{99} 

\bibitem{AfBr01}  A.~L.~Afendikov and T.~J.~Bridges. \emph{Instability of the Hocking-Stewartson pulse and its implications for the three-dimensional Poiseuille flow.} R. Soc. Lond. Proc. Ser. A Math. Phys. Eng. Sci. {\bf 457} (2001) 257--272.

\bibitem{AGJ} J.~Alexander, R.~Gardner, and C.~Jones. \emph{A topological invariant arising in the stability analysis of travelling waves.} J. Reine Angew. Math. {\bf 410} (1990) 167--212.

\bibitem{AlBr02} L.~Allen and T.~J.~Bridges.  \emph{Numerical exterior algebra and the compound matrix method}. Numer. Math. {\bf 92} (2002) 197--232.


\bibitem{StabLab} B.~Barker, J.~Humpherys, J.~Lytle, and K.~Zumbrun. \emph{STABLAB: A MATLAB-based numerical library for Evans function computation.} Available at: https://github.com/nonlinear-waves/stablab\_\,matlab (June 2015).

\bibitem{Br99} T.~J.~Bridges. \emph{The Orr-Sommerfeld equation on a manifold.} R. Soc.  Lond. Proc. Ser. A Math. Phys. Eng. Sci. {\bf 455} (1999) 3019--3040.

\bibitem{BrDeGo02}  T.~J.~Bridges, G.~Derks, and G.~Gottwald. \emph{Stability and instability of solitary waves of the fifth-order {K}d{V} equation: A numerical framework.} Phys. D {\bf 172} (2002) 190--216.

\bibitem{Br00}  L.~Q.~Brin. \emph{Numerical testing of the stability of viscous shock  waves.} Math. Comp. {\bf 70} (2001) 1071--1088. 


	

\bibitem{Cam1} R. Camassa, D. Holm, \emph{An integrable shallow water equation with peaked solitons}, Phys. Rev. Lett. {\bf 71} (1993), 1661--1664.

\bibitem{Cam2} R. Camassa, D. Holm and J. Hyman, \emph{A new integrable shallow water equation}, Adv. Appl. Mech. {\bf 31} (1994), 1--33.



\bibitem{Chen2016} R.~M.~Chen, F.~Guo, Y.~Liu, and C.~Qu, \emph{Analysis on the blow-up of solutions to a class of integrable peakon equations}, J.~Funct.~Anal.~{\bf{270}} (2016), 2343-2374.

\bibitem{Chen2022} R.~M.~Chen, T.~Hu, and Y.~Liu, \emph{The integrable shallow-water models with cubic nonlinearity}, J.~Math.~Fluid Mech.~{\bf{24}} (2022), 49.

\bibitem{Chen2021} R.~M.~Chen,  W.~Lian, D.~Wang, and R.~Xu, \emph{A rigidity property for the Novikov equation and the asymptotic stability of peakons}, Arch.~Rat.~Mech.~and Anal.~{\bf{241}} (2021), 497-533.


\bibitem{Chen2019} R.M. Chen and D.E. Pelinovsky. \emph{$W^{1,\infty}(\R)$ instability of $ H^ 1$-stable peakons in the Novikov equation}, Dynamics of PDE {\bf{18}} (2021), 173-197.




{\bibitem{Danchin} R.~Danchin, {\emph{A Few Remarks On The Camassa-Holm Equation}}, Differential and Integral Equations {\bf{14}} (2001), 953--988.}


%

\bibitem{DeGo05} G.~Derks and G.~A. Gottwald. \emph{A robust numerical method to study oscillatory instability of gap solitary waves.} SIAM J. Appl. Dyn. Syst. {\bf 4} (2005) 140--158.

\bibitem{Drury80} L.~O.~Drury. \emph{Numerical solution of Orr-Sommerfeld-type equations}. J.~Comput.~Phys.~{\bf{37}} (1980) 133--139.

\bibitem{EhJL2} B. Ehrman, M. A. Johnson and S. Lafortune, \emph{Modulational Instability of Periodic Traveling Waves in the Novikov Equation}, preprint (2024).

\bibitem{Escher} J.~Escher and .~Yin, {\emph{Well-posedness, blow-up phenomena, and global solutions for the b-equation}}, Journal f\"ur die reine und angewandte Mathematik {\bf{624}} (2008), 51--80.

\bibitem{Evans} J.~W.~Evans. \emph{Nerve axon equations. IV. The stable and unstable impulse.} Indiana Univ. Math. J. {\bf 24} (1975) 1169--1190.







\bibitem{HG07} T. Gallay and M. Hărăgus, \emph{Orbital stability of periodic waves for the nonlinear Schrödinger equation}, J. Dynam. Differential Equations, {\bf 19} (2007) 825–865.

\bibitem{GSS} M. Grillakis, J. Shatah and W. Strauss, \emph{Stability theory of solitary waves in the presence of symmetry. I.}, J. Funct. Anal {\bf 74} (1987) 160--197.


\bibitem{Gardner98} R.~A.~Gardner and K.~Zumbrun. \emph{The Gap Lemma and geometric criteria for instability of viscous shock profiles.} Communications
on Pure and Applied Mathematics {\bf LI} (1998) 0797--0855.




\bibitem{EvansWeberGroup}  V.~Gubernov, G.~N.~Mercer, H.~S.~Sidhu, and R.~O.~Weber. \emph{Evans function stability of combustion waves.} SIAM J. Appl. Math. {\bf 63} (2003) 1259--1275.


%
%
%

\bibitem{Henry81} D.~Henry,
\emph {Geometric Theory of Semilinear
Parabolic Equations}. Springer-Verlag, New York (1981).




\bibitem{Himonas2012} A.~Himonas and C.~Holliman, \emph{The Cauchy problem for the Novikov equation}, Nonlinearity {\bf{25}}
(2012), 449-479.


%
%



\bibitem{HoneNovikovH} A.~N.~W.~Hone and J.~P.~Wang. \emph{Integrable peakon equations with cubic nonlinearity}, J.~Phys.~A {\bf{41}} (2008), 372002.

\bibitem{Humpherys06} J.~Humpherys and K.~Zumbrun. \emph{An efficient shooting algorithm for Evans function calculations in large systems}. Physica D {\bf{220}}  (2006) 116--126.

\bibitem{HJ15} V. M. Hur and M. A. Johnson,
\emph{Stability of periodic traveling waves for nonlinear dispersive equations},
SIAM J. Math. Anal. {\bf 47} (2015), no. 5, 3528–3554.



\bibitem{Jiang2012} Z.~Jiang and L.~Ni, \emph{Blow-up phenomenon for the integrable Novikov equation}, J.~Math.~Anal.~Appl.~{\bf{385}} (2012), 551-558.



\bibitem{J09} M. A. Johnson, \emph{Nonlinear stability of periodic traveling wave solutions of the generalized Korteweg-de Vries equation}, 
SIAM J. Math. Anal. {\bf 41} (2009), no. 5, 1921–1947.

\bibitem{Jones} C.~K.~R.~T.~Jones. \emph{Stability of the travelling wave solution to the FitzHugh-Nagumo equation.} Trans. AMS {\bf{286}} (1984) 431--469.





\bibitem{Kapitula98a} T.~Kapitula and B.~Sandstede. \emph{Stability of bright solitary-wave solutions to perturbed nonlinear Schr\"odinger equations}. Physica D {\bf 124}  (1998) 58--103.

\bibitem{KP}{\sc T.~Kapitula and  K.~Promislow},
\emph{Spectral and Dynamical Stability of Nonlinear Waves},
Applied Mathematical Sciences {\bf 185},  Springer,  2013. 

\bibitem{Kapitula00} T.~Kapitula and J.~Rubin. \emph{Existence and stability of standing hole solutions to complex Ginzburg-Landau equations}. Nonlinearity {\bf 13} (2000) 77--112.

\bibitem{Lafortune2024} S. Lafortune, \emph{Spectral and Linear Stability of Peakons in the Novikov Equation}, preprint (2024).



 \bibitem{Li00} Y.~A.~Li and K.~Promislow. \emph{The mechanism of the polarizational mode instability in birefringent fiber optics.} SIAM J. Math.
Anal. {\bf 31} (2000) 1351--1373.



%
%







\bibitem{Matsuno} Y.~Matsuno, \emph{Smooth multisoliton solutions and their peakon limit of Novikov’s Camassa–Holm type equation with cubic nonlinearity}, J.~Phys.~A {\bf{46}} (2013), 365203.




\bibitem{NgR79}  B.~Ng and W.~Reid. \emph{An initial-value method for eigenvalue problems using compound matrices}. J. Comput. Phys. {\bf 30} (1979) 125--136.


\bibitem{Ni2011} L.~Ni and Y.~Zhou, \emph{Well-posedness and persistence properties for the Novikov equation}, J.~Diff.~Equ.~{\bf{250}} (2011), 3002-3021.



\bibitem{Novikov} V.~Novikov, \emph{Generalizations of the Camassa-Holm equation}, J.~Phys.~A, {\bf{42}} (2009), pp. 342002, 14.

\bibitem{Palacio2020} J.~M.~Palacios, \emph{Asymptotic stability of peakons for the Novikov equation}, J.~Diff.~Equ.~{\bf{269}} (2020), 7750-7791.

\bibitem{Palacios2021}  J.~M.~Palacios, \emph{Orbital and asymptotic stability of a train of peakons for the Novikov equation}, Discrete Contin.~Dyn.~Syst.~{\bf{41}} (2021) 2475-2518.

\bibitem{PL} C. Pan and S. Li, \emph{Further Results on the smooth and nonsmooth solitons of the Novikov equation}, Nonlinear Dyn, {\bf 86} (2016), 779-788.


\bibitem{Pego} R.~Pego and M.~Weinstein. \emph{Eigenvalues, and instabilities of solitary waves.} Phil. Trans. R. Soc. London A {\bf{340}} (1992) 47--94.

%

%

\bibitem{Sandstede} B.~Sandstede. \emph{Stability of travelling waves.} In: {\sl Handbook of Dynamical Systems II: Towards Applications}  (B. Fiedler, ed.) Elsevier (2002)  983--1055.

\bibitem{skms}  P.~Simon, S.~Kalliadasis, J.~Merkin, and S.~Scott. \emph {Evans function analysis of the stability of non-adiabatic flames.} Combust. Theory Model. \textbf{7} (2003) 545--561.

\bibitem{VK} N. G. Vakhitov and A. A. Kolokolov, \emph{Stationary solutions of the wave equation in a medium with nonlinearity saturation}, Radiophys.  Quantum Electron {\bf 16} (1973) 783-789.

\bibitem{Wu2011} X.~Wu and Z.~Yin, \emph{Global weak solutions for the Novikov equation}, J. Phys. A {\bf{44}} (2011), 055202.

\bibitem{Wu2012}  X.~Wu and Z.~Yin, \emph{Well-posedness and global existence for the Novikov equation}, Ann.~Sc.~Norm.~Super.~Pisa Cl.
Sci. {\bf{11}} (2012),  707-727.



\bibitem{Yanagida} E.~Yanagida. \emph{Stability of fast traveling pulse solutions of the FitzHugh-Nagumo equations.} J. Math. Biol. {\bf{22}} (1985) 81--104.

\bibitem{ZXO} X. Zheng, Q. Xiao and Z. Ouyang, \emph{A smooth soliton solution and a periodic cuspon solution of the Novikov equation}, Appl. Math Lett., {\bf 112} (2021), 106786.

\end{thebibliography}

\end{document}